\renewcommand\MR[1]{\relax} % Must follow amsrefs
\newtheorem{thm}{Theorem}[section] % Experiment
\numberwithin{equation}{section}
\newtheorem{cor}[thm]{Corollary}
\newtheorem{lemma}[thm]{Lemma}
\newtheorem{prop}[thm]{Proposition}
\theoremstyle{definition}
\newtheorem{definition}[thm]{Definition}
\theoremstyle{remark}
\newtheorem{remark}[thm]{Remark}
\newtheorem{example}[thm]{Example}
\newtheorem{mycomment}[thm]{Comment}
{\end{mycomment}\endgroup}
\def\mathcs{C^{*}}
\newcommand{\cs}{\ensuremath{\mathcs}}
\DeclareMathSymbol{\rtimes}{\mathbin}{AMSb}{"6F}
\newcommand\R{\mathbf{R}}
\newcommand\T{\mathbf{T}}
\newcommand\Z{\mathbf{Z}}
\newcommand\set[1]{\{\,#1\,\}}
\newcommand\sset[1]{\{#1\}}
\def\restr#1{|_{{#1}}}
\def\labelenumi{\textnormal{(\@alph\c@enumi)}}
\def\theenumi{\@alph \c@enumi}
\def\labelenumii{\textnormal{(\@roman\c@enumii)}}
\def\theenumii{\@roman \c@enumii}
\def\alphapart#1{\charno=96
\advance\charno by#1\char\charno}
\def\<{\langle}
\def\>{\rangle}
\let\ipscriptstyle=\scriptscriptstyle
\def\lipsqueeze{{\mskip -3.0mu}}
\def\ripsqueeze{{\mskip -3.0mu}}
\def\ipcomma{\nobreak\mathrel{,}\nobreak}
\newbox\ipstrutbox
\def\ipstrut{\copy\ipstrutbox}
\def\lip#1<#2,#3>{\mathopen{\relax_{\ipstrut\ipscriptstyle{
#1}}\lipsqueeze
\langle} #2\ipcomma #3 \rangle}
\def\blip#1<#2,#3>{\mathopen{\relax_{\ipstrut
\ipscriptstyle{ #1}}\lipsqueeze\bigl\langle} #2\ipcomma #3 \bigr\rangle}
\def\rip#1<#2,#3>{\langle #2\ipcomma #3
\rangle_{\ripsqueeze\ipstrut\ipscriptstyle{#1}}}
\def\brip#1<#2,#3>{\bigl\langle #2\ipcomma #3
\bigr\rangle_{\ripsqueeze\ipstrut\ipscriptstyle{#1}}}
\def\angsqueeze{\mskip -6mu}
\def\smangsqueeze{\mskip -3.7mu}
\def\trip#1<#2,#3>{\langle\smangsqueeze\langle #2\ipcomma #3
\rangle\smangsqueeze\rangle_{\ripsqueeze\ipstrut\ipscriptstyle{#1}}}
\def\btrip#1<#2,#3>{\bigl\langle\angsqueeze\bigl\langle #2\ipcomma
#3
\bigr\rangle
\angsqueeze\bigr\rangle_{\ripsqueeze\ipstrut\ipscriptstyle{#1}}}
\def\tlip#1<#2,#3>{\mathopen{\relax_{\ipstrut\ipscriptstyle{
#1}}\lipsqueeze \langle\smangsqueeze\langle} #2\ipcomma #3
\rangle\smangsqueeze\rangle}
\def\btlip#1<#2,#3>{\mathopen{\relax_{\ipstrut\ipscriptstyle{
#1}}\lipsqueeze
\bigl\langle\angsqueeze\bigl\langle} #2\ipcomma #3
\bigr\rangle\angsqueeze\bigr\rangle}
\def\ip(#1|#2){(#1\mid #2)}
\def\bip(#1|#2){\bigl(#1 \mid #2\bigr)}
\def\Bip(#1|#2){\Bigl( #1 \bigm| #2 \Bigr)}
\newcommand\go{G^{(0)}}
\newcommand\lt{\operatorname{lt}}
\newcommand\rt{\operatorname{rt}}
\newcommand\gx{G[X]}
\newcommand\ho{H^{(0)}}
\newcommand\Iso{\operatorname{Iso}}
\newcommand\so{\Sigma_{0}}
\newcommand\cphi{\mathfrak{c}}
\newcommand\ZT{Z_{\T}}
\begin{document}
\begin{abstract}
  We show that groupoid equivalence preserves a number of groupoid
  properties such as properness or the property of being topologically
  principal.
\end{abstract}

\title{Properties preserved by groupoid equivalence}

\author[van Wyk]{Daniel W. van Wyk}
\address{Department of Mathematics\\ Dartmouth College \\ Hanover, NH
  03755-3551 USA}
\email{dwvanwyk79@gmail.com}

\author[Williams]{Dana P. Williams}
\address{Department of Mathematics\\ Dartmouth College \\ Hanover, NH
  03755-3551 USA}
\email{dana.williams@Dartmouth.edu}

\maketitle
%\makeatletter\providecommand\@dotsep{5}\makeatother\listoftodos\relax

%%%%%%%%%%%%%%%%%%%%%%%%%%%%%
%%% Body

\section*{Introduction}
\label{sec:introduction}

The \emph{raison d'\^etre} for groupoid equivalence is that equivalent
second countable groupoids with Haar systems have Morita equivalent
groupoid \cs-algebras \cite{mrw:jot87}.  Then we can take advantage of
the many properties of \cs-algebras preserved by Morita equivalence
\cites{zet:am82,hrw:pams07}.  This has implications for the
  groupoids themselves.  For example, if $G$ and $H$ are equivalent
second countable groupoids with Haar systems, then $\cs(G)$ is GCR if
and only if $\cs(H)$ is GCR \cite{zet:am82}*{Proposition~3.2}.
However, Clark and the first author have shown that $\cs(G)$ is GCR if
and only if each $G$-orbit in $\go$ is locally closed and every
isotropy group is GCR \citelist{\cite{cla:iumj07}*{Theorem~7.1}
  \cite{wyk:jot18}*{Theorem~4.2}}.  Hence the same must be true for
any second countable groupoid $H$ with a Haar system that is
equivalent to $G$.

In this article, we want to examine properties of groupoids that are
preserved by groupoid equivalence.   Unlike the example above, our
methods do not go through
\cs-theory.  Hence we can separate the property that orbits are
locally closed or the type of the isotropy groups and see that these
are preserved individually.   We can also study properties that have
important implications to the structure of the groupoid \cs-algebra,
such as the groupoid being a proper groupoid, and show that these
properties are preserved as well.  Since we don't use \cs-theory,
we can avoid requiring that our groupoids are second countable and/or
that they have Haar systems.  Nevertheless our methods do
require that groupoids have open range and source maps.  This is
clearly the most important class of groupoids as it is implied by the
existence of a Haar system.

Our paper is organized as follows.  In Section~\ref{sec:preliminaries}
we establish our definitions and recall the notion of the blow-up of a
groupoid.  Since two groupoids are equivalent if and only if they have
isomorphic blow-ups, this allows us to reduce many questions to the
case of a blow-up.

In Section~\ref{sec:prop-cart-group}, we show that groupoid
equivalence preserves proper groupoids as well as Cartan groupoids.
We also 
observe that if $G$ and $H$ are equivalent, then every isotropy group
of $G$ is isomorphic to an isotropy group of $H$ and vice versa.
Hence if $G$ has abelian isotropy, then so does any groupoid $H$
equivalent to $G$.

In Section~\ref{sec:topol-essent-princ} we show that equivalence
preserves the property that a groupoid is topologically principal in
that the set of points $u\in\go$ where the isotropy group,
$G(u)$, is trivial is dense in the unit space.   We show this is also
true for the property of being essentially principal meaning points
with trivial isotropy are dense in every closed invariant subset of
the unit space.  Both these assertions also hold when we replace
trivial isotropy with the  stronger notion of discretely trivial
isotropy introduced by Renault in \cite{ren:jot91}.

In Section~\ref{sec:orbits-isotropy}, we see that the orbit spaces of
equivalent groupoids are homeomorphic and what equivalence implies
about the continuity of the isotropy map $u\mapsto G(u)$.  We apply
this to show that the property of being proper modulo the isotropy
introduced in \cite{wykwil:iumj22} is preserved by equivalence.

In Section~\ref{sec:integrable-groupoids} we examine how the concept of
integrablity, introduced by Clark and an Huef, behaves under
equivalence.

In Section~\ref{sec:t-groupoids}, we recall and sharpen some results from
\cite{muhwil:plms395}*{\S3} about equivalence of twists.

\subsubsection*{Assumptions}
\label{sec:assumptions}

Here groupoid always means a locally compact Hausdorff groupoid with
open range and source maps.  An isomorphism of groupoids means an
isomorphism of topological groupoids; that is, an algebraic
isomorphism that is also a homeomorphism.

\section{Preliminaries}
\label{sec:preliminaries}

Groupoid actions are discussed in detail in
  \cite{wil:toolkit}*{\S2.1}. Here we recall that in order for a
groupoid $G$ to act on a space $P$ we require a continuous
\emph{moment map} $r_{P}:P\to \go$ so that $\gamma\cdot p$ is defined
exactly when $s(\gamma)=r_{P}(p)$.  In most situations, we drop the
subscript ``$P$'' and simply write $s(\gamma)=r(p)$ and trust the
meaning is clear from context.  We say that the action is \emph{free}
if $\gamma\cdot p=p$ if and only if $\gamma=r(p)$ and that the action
is \emph{proper} if the map $\Theta:G*P\to P\times P$ given by
$(\gamma,p)\mapsto (\gamma\cdot p,p)$ is a proper map in that
$\Theta^{-1}(K)$ is compact whenever $K\subset P\times P$ is compact.

In many places in the literature, moment maps for groupoid actions are
also required to be open.  For example, this is the case in
\cite{mrw:jot87} and \cite{simwil:jot11}.  One reason for this is that
it is an appropriate assumption in the definition of groupoid
equivalence.

\begin{definition}[\cite{wil:toolkit}*{Definition~2.29}]
  \label{def-equivalence}
  Suppose that $G$ and $H$ are locally compact Hausdorff
  groupoids. Then a locally compact Hausdorff space $Z$ is called a
  \emph{$(G,H)$-equivalence} if the following conditions are
  satisfied.
  \begin{enumerate}[\enspace (E1)]
  \item $Z$ is a free and proper left $G$-space.
  \item $Z$ is a free and proper right $H$-space.
  \item The $G$- and $H$-actions commute.
  \item The moment map $r_{Z}$ is open and induces a homeomorphism of
    $Z/H$ onto $\go$.
  \item The moment map $s_{Z}$ is open and induces a homeomorphism of
    $G\backslash Z$ onto $\ho$.
  \end{enumerate}
\end{definition}
\begin{remark}
  Definition~\ref{def-equivalence} could be used even when $G$ and $H$
  aren't required to have open range and source maps.  However, when
  $G$ and $H$ do have open range and source maps, it follows from
  \cite{wil:toolkit}*{Remark~2.31} that we can replace (E4) and (E5)
  with the assertion that the moment maps $r_{Z}$ and $s_{Z}$ are open
  and simply induce bijections.  Since moment maps were always assumed
  to be open in \cite{mrw:jot87} and \cite{simwil:jot11},
  Definition~\ref{def-equivalence} agrees with
  \cite{mrw:jot87}*{Definition~2.1} and
  \cite{simwil:jot11}*{Definition~1.4}.
\end{remark}

Since we sometimes construct new groupoids from existing ones, it is
necessary to take some care to verify that the new range and source
maps are open.  As a bit of comfort, we including the following
observation which is definitely in the spirit of the central theme of
this paper.

\begin{lemma}
  Suppose that $G$ and $H$ are equivalent locally compact Hausdorff
  groupoids that do not necessarily have open range and source maps.
  If $H$ has open range and source maps, then so does $G$.
\end{lemma}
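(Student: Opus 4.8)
The plan is to fix a $(G,H)$-equivalence $Z$ and to prove that $r_{G}$ is open; since inversion is a homeomorphism of $G$ interchanging $r_{G}$ and $s_{G}$, openness of $s_{G}$ is then automatic. The argument will use only that the left $G$-action on $Z$ is free and proper and that the moment maps $r_{Z}\colon Z\to\go$ and $s_{Z}\colon Z\to\ho$ are open, all of which are part of Definition~\ref{def-equivalence}; in particular the hypothesis that $H$ has open range and source maps is not used, which is very much in the spirit of the paper.

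First I would realize the orbit relation of the $G$-action on $Z$ concretely. Properness of the $G$-action says exactly that $\Theta\colon G*Z\to Z\times Z$, $(\gamma,z)\mapsto(\gamma\cdot z,z)$, is proper, and freeness makes $\Theta$ injective. By (E4) and (E5) the moment map $s_{Z}$ is constant on $G$-orbits and descends to a bijection $G\backslash Z\to\ho$, so the image of $\Theta$ is exactly the set $R=\set{(z',z)\in Z\times Z: s_{Z}(z')=s_{Z}(z)}$, which is closed in $Z\times Z$. Hence $\Theta$ corestricts to a continuous proper bijection $\Phi\colon G*Z\to R$ between locally compact Hausdorff spaces, and such a map is a homeomorphism, since a proper continuous map into a locally compact Hausdorff space is closed and a closed continuous bijection is a homeomorphism.

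Next I would push openness through $\Phi$. Because $s_{Z}$ is open, the first-coordinate projection $p\colon R\to Z$ is open, as it carries a basic open set $(V\times W)\cap R$ onto $V\cap s_{Z}^{-1}(s_{Z}(W))$. Consequently the action map $G*Z\to Z$, $(\gamma,z)\mapsto\gamma\cdot z$, equals $p\circ\Phi$ and is therefore open; composing with the open map $r_{Z}$ and using the moment-map identity $r_{Z}(\gamma\cdot z)=r_{G}(\gamma)$ shows that $(\gamma,z)\mapsto r_{G}(\gamma)$ is an open map $G*Z\to\go$. Finally, since $r_{Z}$ is surjective onto $\go$, the first projection $\pi\colon G*Z\to G$ is surjective, so for every open $U\subseteq G$ we have $r_{G}(U)=(r_{G}\circ\pi)(\pi^{-1}(U))$, which is open. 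Thus $r_{G}$, and hence $s_{G}$, is open.

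I expect the one point requiring care to be the homeomorphism claim for $\Phi$ — equivalently, the continuity of the division map $R\to G$ sending $(z',z)$ to the unique $\gamma$ with $\gamma\cdot z=z'$ — and this is precisely where properness of the $G$-action enters. Identifying $\operatorname{im}\Theta$ with $R$ and the point-set manipulations with the open maps $r_{Z}$ and $s_{Z}$ are routine.
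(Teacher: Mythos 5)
Your proof is correct, but it takes a genuinely different route from the paper's. The paper verifies Fell's criterion by a net argument: it first lifts a net $u_{n}\to r(\gamma)$ through the homeomorphism $Z/H\cong\go$ --- and this is precisely where the hypothesis that $H$ has open range and source maps is used, to make the orbit map $Z\to Z/H$ open --- and then uses openness of $s_{Z}$ together with freeness and properness of the left $G$-action to manufacture $\gamma_{n}\to\gamma$ with the required range values. You instead argue globally: freeness, properness, and (E5) make $\Theta$ a continuous proper bijection of $G*Z$ onto the closed set $R=\set{(z',z)\in Z\times Z: s_{Z}(z')=s_{Z}(z)}$, hence a homeomorphism, and openness of $s_{Z}$ and $r_{Z}$ then give openness of the action map and of $r_{G}$; each step checks out, including the identification of the image of $\Theta$ with $R$ (which uses exactly the two halves of (E5): $G$-invariance of $s_{Z}$ and injectivity of the induced map on $G\backslash Z$) and the surjectivity of $G*Z\to G$ coming from (E4). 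A genuine bonus of your route, which you point out, is that the hypothesis on $H$ is never invoked: since Definition~\ref{def-equivalence} already requires $r_{Z}$ to be open, one can lift nets directly through $r_{Z}$ rather than through $Z/H$, so your argument in fact shows that any groupoid occurring in an equivalence in the sense of Definition~\ref{def-equivalence} has open range and source maps. What the paper's approach buys is a short, self-contained net argument resting only on the toolkit's characterization of proper actions; yours trades that for the standard facts that a proper continuous map into a locally compact Hausdorff space is closed and that a closed continuous bijection is a homeomorphism, and in return yields the sharper conclusion and an openness statement for the action map itself.
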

\begin{proof}
  Let $Z$ be a $(G,H)$ equivalence. It suffices to see that
  $r:G\to\go$ is open. To do this, we use Fell's Criterion
  \cite{wil:toolkit}*{Proposition~1.1}.  Hence if $u_{n}\to r(\gamma)$
  in $\go$, it will suffice to pass to a subnet, relabel, and find
  $\gamma_{n}\to \gamma$ with $r(\gamma_{n})=u_{n}$.  Since $r_{Z}$
  induces a homeomorphism of $Z/H$ onto $\go$, then are $z_{n}\in Z$
  with $r_{Z}(z_{n})=u_{n}$ and $z_{n}\cdot H\to z\cdot H$ in $Z/H$
  with $r_{Z}(z)=r(\gamma)$ Since $H$ has open range and source maps,
  the orbit map $q:Z\to Z/H$ is open
  \cite{wil:toolkit}*{Proposition~2.12}.  Thus we can pass to a
  subnet, relabel, and assume that there are $\eta_{n}\in H$ such that
  $z_{n}\cdot \eta_{n}\to z$ in $Z$.  Replacing $z_{n}\cdot \eta_{n}$
  with $z_{n}$, we have $z_{n}\to z$ with $r_{Z}(z_{n})= u_{n}$ and
  $r_{Z}(z)=r(\gamma)$.

  Since $s_{Z}$ is open by definition, we can pass to another subnet,
  relabel, and assume that there are $w_{n}\to \gamma\cdot z$ in $Z$
  with $s_{Z}(w_{n})=s_{Z}(z_{n})$.  Since $s_{Z}(w_{n})=s_{Z}(z_{n})$
  there is a unique $\gamma_{n}\in G$ such that
  $w_{n}=\gamma_{n}\cdot z_{n}$.  Since $Z$ is a proper $G$-space,
  \cite{wil:toolkit}*{Proposition~2.17} implies that we can pass to a
  subnet and assume that $\gamma_{n}\to \gamma'$ in $G$.  Since
  $(\gamma_{n}\cdot z_{n})$ converges to both $\gamma'\cdot z$ and
  $\gamma\cdot z$, we have $\gamma'=\gamma$.  But
  $r(\gamma_{n})=r_{Z}(z_{n})=u_{n}$ and we're done.
\end{proof}

A well-known and fundamental example of groupoid equivalence is
  provided by the \emph{blow-up} construction.  If $f:X\to\go$ is a
continuous \emph{open}, surjection, then the blow-up is the groupoid
\begin{equation}
  \label{eq:1a}
  \gx=\set{(x,\gamma,y)\in X\times G\times X:\text{$f(x)=r(\gamma)$
      and $s(\gamma)=f(y)$}}
\end{equation}
equipped with the obvious groupoid structure (see
  \cite{wil:toolkit}*{Example~2.37} for details).  Then $G$ and $\gx$
are equivalent groupoids.  The proof requires both the openness of $f$
as well as the assumption that $G$ has open range and source maps.

The unit space of $\gx$ is $\set{(x,f(x),x):x\in X}$ which we often
silently identify with $X$.

An important feature of blow-ups, especially for our needs here, is
the following result.  A proof is given in
\cite{wil:toolkit}*{Theorem~2.52}.
\begin{thm}
  \label{thm-main-tool} Two groupoids $G$ and $H$ are equivalent if
  and only if they have isomorphic blow-ups.
\end{thm}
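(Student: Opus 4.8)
The plan is to prove the two implications separately, with essentially all of the work in the ``only if'' direction.

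For the ``if'' direction, suppose $\phi$ is an isomorphism of topological groupoids between a blow-up $G[X]$ of $G$ and a blow-up $H[Y]$ of $H$. As recalled just before the statement, $G$ is equivalent to $G[X]$ and $H$ is equivalent to $H[Y]$; moreover an isomorphism of topological groupoids is in particular a groupoid equivalence (realize $H[Y]$ as a $(G[X],H[Y])$-equivalence, with the right $H[Y]$-action by multiplication and the left $G[X]$-action transported through $\phi$). Since groupoid equivalence is transitive, we get that $G$ is equivalent to $G[X]$, which is equivalent to $H[Y]$, which is equivalent to $H$, so $G$ and $H$ are equivalent.

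For the ``only if'' direction, let $Z$ be a $(G,H)$-equivalence. By (E4) and (E5) the moment maps $f:=r_{Z}:Z\to\go$ and $g:=s_{Z}:Z\to\ho$ are continuous open surjections, so the blow-ups $G[Z]$ (along $f$) and $H[Z]$ (along $g$) are both defined, each having unit space canonically identified with $Z$. I would define
\[
  \Phi:G[Z]\to H[Z],\qquad (z,\gamma,w)\mapsto(z,\eta,w),
\]
where $\eta$ is the unique element of $H$ with $z\cdot\eta=\gamma\cdot w$. To see this makes sense: from $f(z)=r(\gamma)$ and $s(\gamma)=f(w)$ the point $\gamma\cdot w$ is defined with $r_{Z}(\gamma\cdot w)=r(\gamma)=r_{Z}(z)$, so by (E4) the points $z$ and $\gamma\cdot w$ lie in the same $H$-orbit, and freeness of the $H$-action makes $\eta$ unique; one then checks $r(\eta)=s_{Z}(z)=g(z)$ and $s(\eta)=s_{Z}(\gamma\cdot w)=s_{Z}(w)=g(w)$, so $(z,\eta,w)\in H[Z]$. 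The inverse is given by the symmetric recipe $\Psi:H[Z]\to G[Z]$, $(z,\eta,w)\mapsto(z,\gamma,w)$ with $\gamma$ the unique element of $G$ satisfying $\gamma\cdot w=z\cdot\eta$ (which exists by (E5) and freeness of the left $G$-action), and $\Psi\circ\Phi=\mathrm{id}$, $\Phi\circ\Psi=\mathrm{id}$ by uniqueness. That $\Phi$ is a groupoid homomorphism fixing units is a short calculation: if $(z,\gamma,w),(w,\gamma',x)$ are composable and $z\cdot\eta=\gamma\cdot w$, $w\cdot\eta'=\gamma'\cdot x$, then $z\cdot(\eta\eta')=\gamma\cdot(w\cdot\eta')=(\gamma\gamma')\cdot x$, so $\Phi$ respects products, and it clearly sends the unit $(z,f(z),z)$ to the unit $(z,g(z),z)$.

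The remaining — and main — point is continuity of $\Phi$ and $\Psi$, which is where freeness, properness, and the open moment maps genuinely enter. Here I would invoke the standard fact that for a free and proper action of a locally compact Hausdorff groupoid (say the right $H$-action on $Z$), the map sending a pair $(z_{1},z_{2})$ lying in a common $H$-orbit to the unique $\eta\in H$ with $z_{1}\cdot\eta=z_{2}$ is continuous on the set of such pairs: indeed the map $Z*H\to Z\times Z$, $(z,\eta)\mapsto(z,z\cdot\eta)$, is a continuous injection (freeness) that is proper, hence a homeomorphism onto its (closed) image, and composing its inverse with the projection to $H$ gives the claimed continuous map — this is the same use of properness via convergent subnets as in the proof of the Lemma above. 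Since $(z,\gamma,w)\mapsto(z,\gamma\cdot w)$ is continuous into the set of common-$H$-orbit pairs, it follows that $(z,\gamma,w)\mapsto\eta$ is continuous, so $\Phi$ is continuous; applying the same argument to the free proper left $G$-action on $Z$ shows $\Psi$ is continuous. Hence $\Phi$ is an isomorphism of topological groupoids, completing the proof. The step I expect to demand the most care is precisely this continuity argument — setting up and applying the orbit-to-arrow map for each of the two actions — since everything else is either an algebraic identity or an immediate consequence of the defining properties of an equivalence together with the openness of $f$ and $g$.
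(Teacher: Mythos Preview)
Your proposal is correct and follows essentially the same route as the paper's (cited) proof: the paper defers to \cite{wil:toolkit}*{Theorem~2.52}, but later (in the proof of Proposition~\ref{prop-discretely-trivial}) it makes explicit that the isomorphism is $\phi:G[Z]\to H[Z]$, $\phi(z,\gamma,w)=(z,\tau_{H}(z,\gamma\cdot w),w)$, built from the blow-ups along the moment maps $r_{Z}$ and $s_{Z}$---exactly your $\Phi$. Your continuity argument via properness is the content of the translation-map lemma the paper records (\cite{wil:toolkit}*{Proposition~2.24} and Lemma~2.42), so nothing is missing.
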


\begin{remark}
  [Standard Technique] \label{rem-std} In the sequel, we will
  repeatedly use the following observation.  Since topological
  properties of groupoids are clearly preserved by isomorphism, to
  show that such a property is preserved by equivalence of groupoids,
  it follows from Theorem~\ref{thm-main-tool} that it suffices to show
  that $G$ has the property if and only if any blow-up $\gx$ has the
  same property.
\end{remark}

\section{Proper and Cartan Groupoids}
\label{sec:prop-cart-group}

Recall that we call a groupoid $G$ is \emph{proper} if it acts
properly on its unit space. Our next example illustrates that
  interesting proper groupoids exist in abundance.

\begin{example}\label{ex-action-version}
  Suppose that $G$ acts properly on the left of the space $P$.  Let
  $G\rtimes P=\set{(p,\gamma,q)\in P\times G\times P:p=\gamma\cdot q}$
  be the action groupoid as in \cite{wil:toolkit}*{Definition~2.5}.
  Then $G\rtimes P$ is a proper groupoid---see
  \cite{wil:toolkit}*{Ex~2.1.14}.\footnote{Note that this includes the
    assertion that $G\rtimes P$ has open range and source maps even if
    the moment map is not open.  See
    \cite{wil:toolkit}*{Remark~2.14}.}
\end{example}

A slightly weaker notion is that of a \emph{Cartan} groupoid.  In
analogy with \cite{pal:aom61}*{Definition~1.1.2}, a $G$-space $P$ is
called a \emph{Cartan $G$-space} if every point in $P$ has a compact
neighborhood $K$ such that $\Theta^{-1}(K\times K)$ is compact
\cite{wil:toolkit}*{Definition~2.22}.  We call a groupoid $G$ Cartan
if $\go$ is a Cartan $G$-space for the usual $G$-action.  Just as in
Example~\ref{ex-action-version}, if $P$ is a Cartan $G$-space, then
the action groupoid $G\rtimes P$ is a Cartan groupoid.  Clearly, every
proper $G$-space is Cartan, but the converse can fail even for group
actions---see \cite{pal:aom61}*{p.~298}.

We can now state our first permanence result.

\begin{thm}
  \label{thm-proper+cartan} Suppose that $G$ and $H$ are equivalent
  groupoids.
  \begin{enumerate}
  \item $G$ is proper if and only if $H$ is proper.
  \item $G$ is Cartan if and only if $H$ is Cartan.
  \end{enumerate}
\end{thm}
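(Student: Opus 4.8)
The plan is to invoke the Standard Technique of Remark~\ref{rem-std}: by Theorem~\ref{thm-main-tool} it suffices to fix an arbitrary continuous open surjection $f\colon X\to\go$ and show that $G$ is proper (resp.\ Cartan) if and only if the blow-up $\gx$ is. The first step is to write down the map $\Theta$ that governs these properties in both cases. Identifying $G*\go$ with $G$ via $\gamma\mapsto(\gamma,s(\gamma))$, and $\gx*X$ with $\gx$ via $(x,\gamma,y)\mapsto\bigl((x,\gamma,y),y\bigr)$ (using the identification of the unit space of $\gx$ with $X$), the relevant maps become $\Theta_{G}\colon G\to\go\times\go$, $\gamma\mapsto(r(\gamma),s(\gamma))$, and $\Theta_{\gx}\colon\gx\to X\times X$, $(x,\gamma,y)\mapsto(x,y)$. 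One notes in passing that $\gx$ is precisely the fibre product $(X\times X)\times_{\go\times\go}G$ taken along $f\times f$ and $\Theta_{G}$, and that $\Theta_{\gx}$ is the pullback of $\Theta_{G}$ along $f\times f$; but the arguments below are self-contained and do not strictly need this picture. Recall that $G$ is proper iff $\Theta_{G}$ is a proper map, and likewise for $\gx$.

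For part (a), I would argue both implications directly with compact sets. If $G$ is proper and $C\subseteq X\times X$ is compact, let $C_{1},C_{2}\subseteq X$ be its images under the two coordinate projections; these are compact, $C\subseteq C_{1}\times C_{2}$, and from the defining relations of $\gx$ one gets $\Theta_{\gx}^{-1}(C)\subseteq C_{1}\times\Theta_{G}^{-1}\bigl(f(C_{1})\times f(C_{2})\bigr)\times C_{2}$, a product of three compact sets; since $\Theta_{\gx}^{-1}(C)$ is closed in $\gx$ and $\gx$ is closed in $X\times G\times X$, it is a closed subset of a compact set, hence compact. Conversely, suppose $\gx$ is proper and let $D\subseteq\go\times\go$ be compact. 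Here I would use that an open surjection onto a locally compact Hausdorff space carries compacta onto compacta: cover $D$ by finitely many sets $f(\operatorname{int}L_{i})\times f(\operatorname{int}L_{i}')$ with $L_{i},L_{i}'\subseteq X$ compact neighborhoods of chosen preimages of points of $D$, so that $D\subseteq(f\times f)(D')$ with $D'=\bigcup_{i}L_{i}\times L_{i}'$ compact. Then $\Theta_{\gx}^{-1}(D')$ is compact, and applying the coordinate projection $F\colon\gx\to G$, $(x,\gamma,y)\mapsto\gamma$, one checks $F\bigl(\Theta_{\gx}^{-1}(D')\bigr)=\Theta_{G}^{-1}\bigl((f\times f)(D')\bigr)\supseteq\Theta_{G}^{-1}(D)$; so $\Theta_{G}^{-1}(D)$ is a closed subset of a compact set and hence compact.

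Part (b) runs on the same bookkeeping, now localized. If $G$ is Cartan and $x\in X$, pick a compact neighborhood $K$ of $f(x)$ with $\Theta_{G}^{-1}(K\times K)$ compact, and then, using continuity of $f$ and local compactness of $X$, a compact neighborhood $L$ of $x$ with $f(L)\subseteq K$; the defining relations give $\Theta_{\gx}^{-1}(L\times L)\subseteq L\times\Theta_{G}^{-1}(K\times K)\times L$, so it is compact and $\gx$ is Cartan. Conversely, if $\gx$ is Cartan and $u\in\go$, choose $x$ with $f(x)=u$ and a compact neighborhood $L$ of $x$ with $\Theta_{\gx}^{-1}(L\times L)$ compact; since $f$ is open, $K=f(L)$ is a compact neighborhood of $u$, and $F\bigl(\Theta_{\gx}^{-1}(L\times L)\bigr)=\Theta_{G}^{-1}(K\times K)$ is compact, so $G$ is Cartan. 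There is no deep obstacle here; the only care required is keeping the identifications $G*\go\cong G$ and $\gx*X\cong\gx$ straight, and making sure that each ``compact neighborhood'' assertion genuinely uses local compactness of $X$ or openness of $f$ at the right place — everything else is a routine manipulation of compact sets.
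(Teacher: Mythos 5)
Your proposal is correct, and at the top level it is the paper's route: reduce via Remark~\ref{rem-std} and Theorem~\ref{thm-main-tool} to showing that $G$ is proper (resp.\ Cartan) if and only if a blow-up $\gx$ is, i.e.\ exactly Lemmas \ref{lem-proper-blow} and~\ref{lem-cartan-blow}. Where you differ is in how those lemmas are proved: the paper runs everything through net criteria (Lemma~\ref{lem-seq-proper}, which is \cite{wil:toolkit}*{Proposition~2.17(PA3)}, and the $P(K,K)$ reformulation of Lemma~\ref{lem-cartan-criterion}), lifting nets along the open map $f$ and extracting convergent subnets, whereas you work straight from the definitions with compact sets: you sandwich $\Theta_{\gx}^{-1}(C)$ inside a product of compacta using that $\gx$ is closed in $X\times G\times X$, and in the converse directions you lift a compact subset of $\go\times\go$ (up to containment) to a compact subset of $X\times X$ by a finite cover with compact neighborhoods and openness of $f$, then push forward along the projection $(x,\gamma,y)\mapsto\gamma$ and verify the image identity $F\bigl(\Theta_{\gx}^{-1}(L\times L)\bigr)=\Theta_G^{-1}\bigl(f(L)\times f(L)\bigr)$. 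Both executions are sound; yours is self-contained at the level of the definition of a proper map and avoids the subnet bookkeeping, and in the forward Cartan direction it is slightly leaner than the paper's, since you only need a compact neighborhood $L$ of $x$ with $f(L)\subseteq K$ (continuity plus local compactness), while the paper constructs $K'$ with $f(K')=K$ exactly via the footnote argument -- the same finite-cover device you use, but you only need it in the converse directions where openness of $f$ genuinely enters. The paper's net formulation, on the other hand, matches the toolkit criteria that are reused throughout the later sections. One cosmetic slip: the fact you invoke is not that an open surjection ``carries compacta onto compacta'' (any continuous map does that), but that a compact subset of the base of a continuous open surjection with locally compact domain is contained in the image of a compact subset of the domain; your finite-cover construction proves precisely this, so nothing is missing.
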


We start with some preliminary observations.  We let
$\pi:G\to\go\times\go$ be given by
$\pi(\gamma)=\bigl(r(\gamma),s(\gamma)\bigr)$.

\begin{lemma}
  \label{lem-seq-proper} A groupoid $G$ is proper if and only if
  whenever a net $(\gamma_{n})\subset G$ is such that
  $\pi(\gamma_{n})\to (u,v)\in\go\times\go$, then $(\gamma_{n})$ has a
  convergent subnet.
\end{lemma}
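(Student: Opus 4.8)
The plan is to unwind the definition of properness into a statement about the single map $\pi$ and then apply the standard net characterization of proper maps between locally compact Hausdorff spaces. First I would note that $\gamma\mapsto\bigl(\gamma,s(\gamma)\bigr)$ is a homeomorphism of $G$ onto $G*\go$ which carries $\pi$ to the map $\Theta\colon G*\go\to\go\times\go$, $(\gamma,u)\mapsto(\gamma\cdot u,u)=(r(\gamma),u)$, of the canonical $G$-action on $\go$. Hence, by definition, $G$ is proper if and only if $\pi\colon G\to\go\times\go$ is a \emph{proper map}, that is, $\pi^{-1}(K)$ is compact whenever $K\subseteq\go\times\go$ is compact. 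So it suffices to prove that $\pi$ is proper exactly when every net $(\gamma_{n})\subset G$ for which $\bigl(\pi(\gamma_{n})\bigr)$ converges in $\go\times\go$ has a convergent subnet.

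For the forward implication, assume $\pi$ is proper and that $\pi(\gamma_{n})\to(u,v)$. Since $G$ is locally compact Hausdorff, so is $\go\times\go$, and we may choose a compact neighborhood $N$ of $(u,v)$. Then $\pi(\gamma_{n})\in N$ eventually, so $\gamma_{n}\in\pi^{-1}(N)$ eventually, and $\pi^{-1}(N)$ is compact; a net eventually lying in a compact set has a convergent subnet, which is what we wanted.

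For the converse, assume the net condition holds and let $K\subseteq\go\times\go$ be compact. Given any net $(\gamma_{n})$ in $\pi^{-1}(K)$, compactness of $K$ lets us pass to a subnet, relabel, and assume $\pi(\gamma_{n})\to(u,v)\in K$. The hypothesis then provides a further subnet $(\gamma_{n_{j}})$ converging to some $\gamma\in G$, and continuity of $\pi$ gives $\pi(\gamma)=\lim_{j}\pi(\gamma_{n_{j}})=(u,v)\in K$, so $\gamma\in\pi^{-1}(K)$. Thus every net in $\pi^{-1}(K)$ has a subnet converging to a point of $\pi^{-1}(K)$, which is precisely the statement that $\pi^{-1}(K)$ is compact. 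Hence $\pi$ is proper.

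There is no serious obstacle here: the real content is the general-topology fact that, for a continuous map between locally compact Hausdorff spaces, properness is equivalent to the ``convergent subnet'' condition. The only points that require a little care are the identification of the $G$-action on $\go$ with $\pi$ — so that the official definition of a proper groupoid (in terms of $\Theta$) becomes the assertion that $\pi$ is a proper map — and making sure that local compactness of $\go\times\go$ is genuinely invoked in the forward direction, since it is what converts ``$\pi(\gamma_{n})$ converges'' into ``$\gamma_{n}$ is eventually in a compact set.''
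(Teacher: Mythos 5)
Your proof is correct. It differs from the paper's only in that the paper disposes of the lemma in one line by observing that, since $r(\gamma_{n})=\gamma_{n}\cdot s(\gamma_{n})$, the statement is a restatement of a known net characterization of proper actions (Proposition~2.17(PA3) of the toolkit reference) specialized to the canonical action of $G$ on $\go$; you instead prove that characterization from scratch. Your route --- identifying $G$ with $G*\go$ via $\gamma\mapsto(\gamma,s(\gamma))$ so that the action map $\Theta$ becomes $\pi$, and then establishing the general fact that a continuous map into a locally compact Hausdorff space is proper (preimages of compacta are compact) if and only if every net whose image converges has a convergent subnet --- is exactly the content hiding behind that citation, so the mathematics is the same but your write-up is self-contained. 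The two points you flag as needing care (that the official definition of a proper groupoid reduces to properness of $\pi$, and that local compactness of $\go\times\go$ is what turns convergence of $\pi(\gamma_{n})$ into eventual containment in a compact set) are indeed the only nontrivial steps, and you handle both correctly; the net argument for compactness of $\pi^{-1}(K)$ in the converse direction is also complete, since Hausdorffness of $G$ guarantees the limit point you produce lies in $\pi^{-1}(K)$ and the subnet converges to it within that subspace.
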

\begin{proof}
  Since $r(\gamma_{n})=\gamma_{n}\cdot s(\gamma_{n})$, the lemma is
  just a restatement of \cite{wil:toolkit}*{Proposition~2.17(PA3)}.
\end{proof}

\begin{lemma}
  \label{lem-proper-blow} Suppose that $f:X\to\go$ is a continuous,
  open surjection.  Then $G$ is proper if and only if $\gx$ is proper.
\end{lemma}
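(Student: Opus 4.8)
The plan is to reduce both properties to the net criterion of Lemma~\ref{lem-seq-proper} and then shuttle nets between $G$ and $\gx$ using nothing but the fact that $f$ is continuous, open, and surjective. To set up, I would first record that, identifying the unit space of $\gx$ with $X$ via $x\mapsto(x,f(x),x)$, the range and source of $(x,\gamma,y)\in\gx$ are $x$ and $y$ respectively; thus the map of Lemma~\ref{lem-seq-proper} attached to $\gx$, which I will write $\pi_{\gx}$, is simply $\pi_{\gx}(x,\gamma,y)=(x,y)$. That $\gx$ is indeed one of our groupoids---locally compact Hausdorff with open range and source maps, so that Lemma~\ref{lem-seq-proper} is available for it---is part of the blow-up construction recalled above, and this is precisely where the openness of $f$ enters.

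For the implication ``$G$ proper $\Rightarrow$ $\gx$ proper,'' I would take a net $\bigl((x_n,\gamma_n,y_n)\bigr)$ in $\gx$ with $(x_n,y_n)\to(x,y)$ in $X\times X$. Continuity of $f$, $r$, and $s$ gives $r(\gamma_n)=f(x_n)\to f(x)$ and $s(\gamma_n)=f(y_n)\to f(y)$, so $\pi(\gamma_n)\to\bigl(f(x),f(y)\bigr)$ in $\go\times\go$. By Lemma~\ref{lem-seq-proper} for $G$ we may pass to a subnet with $\gamma_n\to\gamma$ in $G$; continuity of $r$ and $s$ then forces $r(\gamma)=f(x)$ and $s(\gamma)=f(y)$, so $(x,\gamma,y)\in\gx$ and the subnet of $(x_n,\gamma_n,y_n)$ converges to it in $\gx$. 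By Lemma~\ref{lem-seq-proper} for $\gx$, this shows $\gx$ is proper.

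For the converse, I would start with a net $(\gamma_n)\subset G$ with $\pi(\gamma_n)\to(u,v)$ and, using surjectivity of $f$, choose $x\in f^{-1}(u)$ and $y\in f^{-1}(v)$. Now I invoke Fell's Criterion \cite{wil:toolkit}*{Proposition~1.1}: since $f$ is open and $r(\gamma_n)\to u=f(x)$, after passing to a subnet and relabelling there are $x_n\in X$ with $f(x_n)=r(\gamma_n)$ and $x_n\to x$; applying Fell's Criterion once more along this subnet, a further subnet produces $y_n\in X$ with $f(y_n)=s(\gamma_n)$ and $y_n\to y$. Then $(x_n,\gamma_n,y_n)\in\gx$ and $\pi_{\gx}(x_n,\gamma_n,y_n)=(x_n,y_n)\to(x,y)$, so properness of $\gx$ and Lemma~\ref{lem-seq-proper} yield a subnet along which $(x_n,\gamma_n,y_n)$ converges in $\gx$; in particular $(\gamma_n)$ has a convergent subnet, and $G$ is proper.

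I do not expect a genuine obstacle here: both directions are short net arguments, and iterating ``pass to a subnet'' is harmless since a subnet of a subnet is a subnet. The only place that requires any thought is the converse, where one must lift the convergent nets $r(\gamma_n)\to u$ and $s(\gamma_n)\to v$ in $\go$ to convergent nets in $X$ --- this is exactly what openness (together with surjectivity) of $f$ buys us via Fell's Criterion.
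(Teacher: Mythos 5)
Your proposal is correct and follows essentially the same route as the paper: both directions reduce to the net criterion of Lemma~\ref{lem-seq-proper}, with the forward implication using continuity of $f$ to push convergence down to $\go\times\go$, and the converse using openness of $f$ (Fell's Criterion) to lift the nets $r(\gamma_n)$ and $s(\gamma_n)$ to convergent nets in $X$ after passing to subnets. Your version is slightly more explicit about verifying that the limit lies in $\gx$ and about the identification $\pi_{\gx}(x,\gamma,y)=(x,y)$, but the argument is the same.
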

\begin{proof}
  We use the criterion given in Lemma~\ref{lem-seq-proper}.

  Suppose that $G$ is proper and that
  $\bigl((x_{n},\gamma_{n},y_{n})\bigr) \subset G[X]$ is such that
  $x_{n}\to x$ and $y_{n}\to y$ in $X$, respectively.  Then
  $r(\gamma_{n})\to f(x)$ and $s(\gamma_{n})\to f(y)$.  Since $G$ is
  proper, $(\gamma_{n})$ has a convergent subnet.  But then
  $\bigl((x_{n},\gamma_{n},y_{n})\bigr)$ also has a convergent subnet.
  Hence $G[X]$ is proper.

  Conversely, suppose that $\gx$ is proper and that $(\gamma_{n})$ is
  a net in $G$ such that $\bigl(\pi(\gamma_{n})\bigr)$ converges in
  $\go\times\go$.  Since $f$ is open, we can pass to a subnet,
  relabel, and assume that there are $x_{n }\in X$ such that
  $f(x_{n})=r(\gamma_{n})$ and $x_{n}\to x$ in $X$.  Similarly, we can
  pass to another subnet, relabel, and assume that there are
  $y_{n}\in X$ such that $f(y_{n })=s(\gamma_{n})$ and $y_{n}\to y$.
  Since $G[X]$ is proper and
  $(x_{n},\gamma_{n},y_{n})\cdot y_{n}=x_{n}$, the net
  $\bigl((x_{n},\gamma_{n},y_{n})\bigr)$ must have a convergent
  subnet.  Since the topology on $G[X]$ is the relative product
  topology, $(\gamma_{n})$ must have a converent subnet.  Hence $G$ is
  proper as claimed.
\end{proof}

\begin{remark}
  Although the first part of the proof of Lemma~\ref{lem-proper-blow}
  did not require the openness of $f$, it is still required to see
  that $G$ and $\gx$ are equivalent.
\end{remark}

The following criterion follows from
\cite{wil:toolkit}*{Definition~2.22 and Lemma~2.23}.
\begin{lemma}
  \label{lem-cartan-criterion} A $G$-space $P$ is Cartan if and only
  if every $p\in P$ has a compact neigbhorhood $K$ such that
  \begin{equation}
    \label{eq:2a}
    P(K,K)=\set{\gamma\in G:\gamma\cdot K\cap K\not=\emptyset}
  \end{equation}
  is compact in $G$.
\end{lemma}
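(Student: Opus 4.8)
The plan is to read off the criterion directly from Definition~2.22 by relating the set $P(K,K)$ to the set $\Theta^{-1}(K\times K)$ that appears in the definition of a Cartan $G$-space, and to observe that the \emph{same} compact neighborhood $K$ serves in both formulations. The key is a set-theoretic identity: if $\operatorname{pr}_1:G*P\to G$ denotes the projection $(\gamma,p)\mapsto\gamma$, then
\begin{equation*}
  P(K,K)=\operatorname{pr}_1\bigl(\Theta^{-1}(K\times K)\bigr).
\end{equation*}
To see this I would unwind the definitions: $\gamma\in P(K,K)$ means $\gamma\cdot K\cap K\not=\emptyset$, i.e.\ there is some $k\in K$ with $s(\gamma)=r_{P}(k)$ and $\gamma\cdot k\in K$; this is exactly the statement that $(\gamma,k)\in\Theta^{-1}(K\times K)$ for some $k$, which is to say $\gamma$ lies in the first-coordinate projection of $\Theta^{-1}(K\times K)$.

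Granting this identity, the forward implication is immediate: if $P$ is Cartan then each $p$ has a compact neighborhood $K$ with $\Theta^{-1}(K\times K)$ compact, and since $\operatorname{pr}_1$ is continuous its image $P(K,K)$ is compact. For the converse, suppose each $p$ has a compact neighborhood $K$ with $P(K,K)$ compact. The identity shows $\Theta^{-1}(K\times K)\subseteq P(K,K)\times K$, and the latter is compact as a product of compact sets. It then suffices to verify that $\Theta^{-1}(K\times K)$ is closed in $G\times P$, for a closed subset of a compact set is compact. Here I would note that $G*P$ is closed in $G\times P$, being the preimage of the diagonal of $\go\times\go$ under $(\gamma,p)\mapsto\bigl(s(\gamma),r_{P}(p)\bigr)$ (the diagonal is closed since $\go$ is Hausdorff), and that $K\times K$ is closed in $P\times P$, so its preimage under the continuous map $\Theta$ is closed in $G*P$ and hence in $G\times P$.

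The only step requiring genuine care is the identity $P(K,K)=\operatorname{pr}_1(\Theta^{-1}(K\times K))$, where one must respect that $\gamma\cdot K$ implicitly ranges only over those $k\in K$ with $s(\gamma)=r_{P}(k)$, so that the projection exactly recovers $P(K,K)$ with no spurious elements; everything after that is routine point-set topology. It is worth emphasizing that no shrinking of neighborhoods is needed: the identical compact neighborhood $K$ witnesses both conditions, which is what makes the two characterizations literally equivalent rather than merely equivalent up to passing to smaller $K$.
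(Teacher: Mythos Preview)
Your argument is correct. The identity $P(K,K)=\operatorname{pr}_1\bigl(\Theta^{-1}(K\times K)\bigr)$ is exactly right, and both implications follow from it as you describe; the closedness argument for the converse is clean and complete.

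As for comparison: the paper does not actually give a proof of this lemma. It simply records that the criterion follows from \cite{wil:toolkit}*{Definition~2.22 and Lemma~2.23} and moves on. Your write-up is therefore more than the paper provides, and is presumably close to what the cited Lemma~2.23 contains. There is nothing to correct.
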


\begin{lemma}
  \label{lem-cartan-blow} Suppose that $f:X\to\go$ is a continuous,
  open surjection.  Then $G$ is Cartan if and only if $\gx$ is Cartan.
\end{lemma}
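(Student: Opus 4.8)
The plan is to run both implications through the compact–neighbourhood criterion of Lemma~\ref{lem-cartan-criterion}, applied to the $G$-space $\go$ on one side and to the $\gx$-space $X$ (its unit space) on the other, transporting compact neighbourhoods back and forth along $f$. Before starting I would record two elementary identifications. Unravelling the $\gx$-action on its unit space — $(x,\gamma,y)\cdot z$ is defined only when $z=y$, with value $x$ — gives, for compact $K'\subseteq X$,
\[
  X(K',K')=\set{(x,\gamma,y)\in\gx:x,y\in K'}=\gx\cap(K'\times G\times K').
\]
Unravelling the $G$-action on $\go$ gives, for compact $K\subseteq\go$,
\[
  \go(K,K)=\set{\gamma\in G:r(\gamma)\in K\text{ and }s(\gamma)\in K}=\pi^{-1}(K\times K),
\]
which is in particular closed in $G$ because $K\times K$ is closed in $\go\times\go$.

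For the forward direction, assume $G$ is Cartan and fix $x_{0}\in X$. I would use the criterion to produce a compact neighbourhood $K$ of $f(x_{0})$ with $\go(K,K)$ compact, then, since $f$ is continuous and $X$ is locally compact Hausdorff, choose a compact neighbourhood $K'$ of $x_{0}$ inside $f^{-1}(K)$. For $(x,\gamma,y)\in X(K',K')$ one has $r(\gamma)=f(x)\in K$ and $s(\gamma)=f(y)\in K$, so $\gamma\in\go(K,K)$; hence $X(K',K')$ sits inside the compact set $K'\times\go(K,K)\times K'$, and since $\gx$ is closed in $X\times G\times X$ it is a closed subset thereof, hence compact. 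By Lemma~\ref{lem-cartan-criterion}, $\gx$ is Cartan.

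For the converse, assume $\gx$ is Cartan and fix $u_{0}\in\go$. Using surjectivity of $f$ I would pick $x_{0}\in X$ with $f(x_{0})=u_{0}$ and a compact neighbourhood $K'$ of $x_{0}$ with $X(K',K')$ compact, and set $K=f(K')$: this is compact, and since $f$ is open it is a neighbourhood of $u_{0}$. To see $\go(K,K)$ is compact I would take a net $(\gamma_{n})$ in it, lift $r(\gamma_{n}),s(\gamma_{n})\in f(K')$ to points $x_{n},y_{n}\in K'$, observe $(x_{n},\gamma_{n},y_{n})\in X(K',K')$, pass to a convergent subnet, and read off that $(\gamma_{n})$ has a subnet converging in $G$; as $\go(K,K)$ is closed the limit stays in it, so $\go(K,K)$ is compact and $G$ is Cartan.

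I do not expect a genuine obstacle here; the only thing needing care is orienting the neighbourhoods correctly, which is precisely why the forward direction uses continuity of $f$ while the converse uses its openness and surjectivity. The one point worth verifying carefully is that $\gx$ is closed in $X\times G\times X$ — used to conclude $X(K',K')$ is compact — but this is immediate since the defining conditions $f(x)=r(\gamma)$ and $s(\gamma)=f(y)$ are preimages of the diagonal of the Hausdorff space $\go$ under continuous maps.
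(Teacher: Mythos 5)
Your proof is correct and takes essentially the same route as the paper's: both directions run through the criterion of Lemma~\ref{lem-cartan-criterion}, transporting compact neighbourhoods along $f$, and your converse is the paper's argument verbatim (lift a net in $P(K,K)$ into the compact set $P(K',K')$ and extract a convergent subnet). The only variation is in the forward direction, where the paper uses openness of $f$ to arrange a compact neighbourhood $K'$ of $x$ with $f(K')=K$ and then argues with subnets, whereas you take any compact $K'\subseteq f^{-1}(K)$ (continuity and local compactness suffice) and conclude by observing that $P(K',K')$ is closed and contained in the compact set $K'\times P(K,K)\times K'$ --- a harmless simplification which, as with the proper case, shows openness of $f$ is not needed for that implication.
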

\begin{proof}
  We will use the criterion from Lemma~\ref{lem-cartan-criterion}.

  Suppose that $G$ is Cartan and $x\in X$.  Let $u=f(x)$.  By
  assumption, there is a compact neighborhood $K$ of $u$ such that
  \begin{equation}
    \label{eq:6}
    P(K,K)=\set{\gamma\in G:\text{$s(\gamma)\in K$ and $r(\gamma)\in K$}}
  \end{equation}
  is compact in $G$.  Since $f$ is continuous and open, there is a
  compact neighborhood, $K'$, of $x$ in $X$ such that
  $f(K')=K$.\footnote{Every point $y\in f^{-1}(K)$ has a compact
    neighborhood $K_{y}$.  Hence there are finitely many $y_{i}$ such
    that $K\subset \bigcup_{i}f(K_{y_{i})}$.  Then
    $K'=f^{-1}(K)\cap \bigl(\bigcup K_{y_{i}}\cup K_{x}\bigr)$ is
    compact.}

  Suppose that $\bigl((x_{i},\gamma_{i},y_{i})\bigr)$ is a net in
  $P(K',K')$.  Then $(x_{i})$ and $(y_{i})$ are nets in $K'$ while
  $(\gamma_{i})$ is a net in $P(K,K)$.  Passing to multiple subnets,
  and relabeling, we can assume that
  $(x_{i},\gamma_{i},y_{i}) \to (x,\gamma,y)$ in $\gx$.  Since
  $P(K',K')$ is closed, it must be compact.  Since $x\in X$ was
  arbitrary, $\gx$ is Cartan.

  Conversely, suppose that $\gx$ is Cartan.  Fix $u\in\go$.  Let
  $x\in f^{-1}(u)$, and let $K'$ be a compact neighborhood of $x$ such
  that $P(K',K')$ is compact in $\gx$.  Let $K=f(K')$.  Since $f$ is
  continuous and open, $K$ is a compact neighborhood of $u$.  Let
  $(\gamma_{i})$ be a net in $P(K,K)$.  Therefore, $r(\gamma_{i})$ and
  $s(\gamma_{i})$ are both in $K$.  Let $x_{i},y_{i}\in K'$ be such
  that $f(x_{i})=r(\gamma_{i})$ and $f(y_{i})=s(\gamma_{i})$.  Then
  $\bigl((x_{i},\gamma_{i},y_{i})\bigr)$ is a net in the compact set
  $P(K',K')$.  Therefore $(\gamma_{i})$ has a convergent subnet and
  the closed set $P(K,K)$ must be compact.
\end{proof}

\begin{proof}[Proof of Theorem~\ref{thm-proper+cartan}]
  The theorem now follows immediately from Lemmas
  \ref{lem-proper-blow}~and \ref{lem-cartan-blow} as in
  Remark~\ref{rem-std}.
\end{proof}

Free Cartan spaces play a significant role due to the following
observation.  If $P$ is a free $G$-space, then we let
\begin{equation}
  \label{eq:1b}
  R(G,P)=\set{(p,q)\in P\times P:G\cdot p=G\cdot q}.
\end{equation}
Then since the action is free, there is a well-defined map
$\tau:R(G,P)\to G$, called the \emph{translation map}, such that
$\tau(p,q)\cdot q=p$.

\begin{lemma}
  Let $P$ be a free $G$-space.  Then $P$ is a Cartan $G$-space if and
  only if its translation map is continuous.
\end{lemma}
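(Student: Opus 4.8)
The statement asserts an equivalence: for a free $G$-space $P$, being Cartan is equivalent to continuity of the translation map $\tau : R(G,P) \to G$. I will prove the two implications separately. The natural tool is the characterization of properness/Cartan-ness via nets, together with the fact that $R(G,P)$ carries the relative topology from $P \times P$ and that $\Theta : G * P \to P \times P$, $(\gamma, p) \mapsto (\gamma \cdot p, p)$, has image precisely $R(G,P)$ when the action is free --- and is then a continuous bijection onto $R(G,P)$ whose inverse is $(p,q) \mapsto (\tau(p,q), q)$. So continuity of $\tau$ is exactly the statement that $\Theta$ is a homeomorphism onto $R(G,P)$.

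\textbf{($\Leftarrow$) Continuity of $\tau$ implies Cartan.} Suppose $\tau$ is continuous and fix $p \in P$. Since $G$ acts on $P$ with open range and source maps on $G$ (our standing assumption), and using that $P$ is locally compact Hausdorff, choose a compact neighborhood $K$ of $p$. I claim $P(K,K) = \set{\gamma \in G : \gamma \cdot K \cap K \neq \emptyset}$ is compact, which by Lemma~\ref{lem-cartan-criterion} gives that $P$ is Cartan. Indeed, if $\gamma \in P(K,K)$, pick $q \in K$ with $\gamma \cdot q \in K$; then $(\gamma \cdot q, q) \in R(G,P)$ and $\gamma = \tau(\gamma \cdot q, q)$. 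Hence $P(K,K) \subseteq \tau\bigl(R(G,P) \cap (K \times K)\bigr)$. The set $R(G,P) \cap (K \times K)$ is a closed subset of the compact set $K \times K$, hence compact, and $\tau$ is continuous, so its image is compact; since $P(K,K)$ is a closed subset of $G$ (the defining condition is closed by continuity of the action and of $r,s$, noting $\gamma \cdot K \cap K \neq \emptyset$ means $s(\gamma) \in r_P(K)$, etc.) contained in this compact image, $P(K,K)$ is compact. The one point needing a little care: I must check $P(K,K)$ is closed in $G$; this follows from a routine net argument using that $K$ is closed and the action map $G * P \to P$ is continuous, or alternatively one can absorb it by shrinking to a slightly larger compact neighborhood. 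A cleaner route may be to just show directly that every net in $P(K,K)$ has a convergent subnet: given $\gamma_i \in P(K,K)$ with $q_i \in K$, $\gamma_i \cdot q_i \in K$, pass to subnets so $q_i \to q$ and $\gamma_i \cdot q_i \to p'$ in the compact $K$; then $(\gamma_i \cdot q_i, q_i) \to (p', q)$ in $R(G,P)$ (using that $R(G,P)$ is closed in $P \times P$ so the limit lies in it), and continuity of $\tau$ forces $\gamma_i = \tau(\gamma_i \cdot q_i, q_i) \to \tau(p', q)$ in $G$.

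\textbf{($\Rightarrow$) Cartan implies continuity of $\tau$.} Suppose $P$ is Cartan. Let $(p_i, q_i) \to (p, q)$ be a convergent net in $R(G,P)$; I must show $\tau(p_i, q_i) \to \tau(p, q)$. Since $G$ is Hausdorff, it suffices to show that every subnet of $\bigl(\tau(p_i, q_i)\bigr)$ has a further subnet converging to $\tau(p, q)$. Fix such a subnet; relabel. Since $q \in P$ and $P$ is Cartan, pick a compact neighborhood $K$ of $q$ with $P(K,K)$ compact; eventually $q_i \in K$, and since $p_i \to p = \tau(p,q) \cdot q$, after enlarging $K$ (replace $K$ by a compact neighborhood of $\set{p, q}$) we may assume eventually both $p_i \in K$ and $q_i \in K$. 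Then $\tau(p_i, q_i) \cdot q_i = p_i$ with $q_i, p_i \in K$, so $\tau(p_i, q_i) \in P(K,K)$, which is compact; pass to a subnet with $\tau(p_i, q_i) \to \gamma$ in $G$. By continuity of the action, $\gamma \cdot q = \lim \tau(p_i,q_i) \cdot q_i = \lim p_i = p$. Since the action is free, $\gamma$ is the unique element with $\gamma \cdot q = p$, i.e. $\gamma = \tau(p,q)$. This completes the subnet argument.

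\textbf{Main obstacle.} The routine-but-real friction is entirely in the first implication, specifically in arranging the compact set cleanly: enlarging a compact neighborhood of $p$ to one that works symmetrically and verifying the relevant set is closed/compact in $G$. I expect to finesse this by phrasing both directions purely through nets (as in Lemma~\ref{lem-seq-proper} and Lemma~\ref{lem-cartan-criterion}) rather than through the topology on $R(G,P)$ explicitly, using that $R(G,P)$ is closed in $P \times P$ (a consequence of freeness plus properness-type arguments, or directly: if $(p_i, q_i) \to (p,q)$ with $G \cdot p_i = G \cdot q_i$, the witnessing $\tau(p_i, q_i)$ need not converge a priori, so closedness of $R(G,P)$ is itself something to note --- but in the Cartan case it does, and in general $R(G,P)$ being closed is standard for free actions with the Cartan hypothesis; for the $(\Leftarrow)$ direction I only need that limits of convergent nets in $R(G,P)$ stay in $R(G,P)$, which I get because once $\tau$ is continuous, $\Theta$ is a homeomorphism onto its image and I can read off closedness from properness of $\Theta$). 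Everything else is a direct application of the net criteria already established.
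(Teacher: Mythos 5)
There are genuine gaps in both directions, and both stem from the same conflation of the Cartan condition (a \emph{local} condition: \emph{some} compact neighborhood of each point works) with properness (\emph{every} compact set works) --- a distinction the paper explicitly flags via Palais. In your $(\Leftarrow)$ direction everything hinges on $R(G,P)$ being closed in $P\times P$: you need it either so that $R(G,P)\cap(K\times K)$ is compact, or, in your ``cleaner route,'' so that the limit $(p',q)$ lies in $R(G,P)$ and $\tau$ can be evaluated there. Continuity of $\tau$ does not give this: $\tau$ is defined only on $R(G,P)$, so it says nothing about a net in $R(G,P)$ converging to a point outside $R(G,P)$, and your fallback (``read off closedness from properness of $\Theta$'') is circular, since properness of $\Theta$ over $K\times K$ is exactly what you are trying to establish. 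The claim is in fact false: $t\cdot(x,y)=(e^{t}x,e^{-t}y)$ on $\R^{2}\setminus\sset{0}$ is a free Cartan $\R$-action (so $\tau$ \emph{is} continuous by the lemma) whose orbit space is not Hausdorff, whence $R(G,P)$ is not closed. The repair is to use continuity of $\tau$ at the diagonal point $(p,p)$: fix a compact neighborhood $W$ of the unit $\tau(p,p)=r_{P}(p)$ in $G$ and shrink $K$ so that $\tau\bigl(R(G,P)\cap(K\times K)\bigr)\subset W$; then $\Theta^{-1}(K\times K)$ is a closed subset of the compact set $W\times K$, and no closedness of $R(G,P)$ is needed.

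In the $(\Rightarrow)$ direction the gap is the sentence ``after enlarging $K$ (replace $K$ by a compact neighborhood of $\sset{p,q}$) \dots\ $\tau(p_{i},q_{i})\in P(K,K)$, which is compact.'' The Cartan hypothesis hands you one compact neighborhood of $q$ with $P(K,K)$ compact; it does not give compactness of $P(K',K')$ for an enlarged $K'$ containing both $p$ and $q$ --- asserting that for arbitrary compact sets is precisely properness, which can fail for Cartan actions. (In the group case one can salvage this by taking $K'=K\cup\gamma K$ with $\gamma=\tau(p,q)$ and writing $P(K',K')$ as a finite union of translates of $P(K,K)$; but $\gamma\cdot K$ is meaningless for a groupoid action, since points of $K$ other than $q$ need not have moment map equal to $s(\gamma)$.) The standard fix in the groupoid setting uses the standing assumption that $r:G\to\go$ is open: pass to a subnet and choose $\eta_{i}\to\gamma$ with $r(\eta_{i})=r_{P}(p_{i})$; then $\eta_{i}^{-1}\gamma_{i}$ is defined, and both $q_{i}$ and $(\eta_{i}^{-1}\gamma_{i})\cdot q_{i}=\eta_{i}^{-1}\cdot p_{i}$ converge to $q$, so the Cartan condition \emph{at the single point $q$} yields a convergent subnet of $(\eta_{i}^{-1}\gamma_{i})$; its limit fixes $q$, hence is a unit by freeness, and therefore $\gamma_{i}=\eta_{i}(\eta_{i}^{-1}\gamma_{i})\to\gamma$. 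Your overall strategy is the right one, but without these two repairs the proof does not go through.
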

\begin{proof}
  This is \cite{wil:toolkit}*{Proposition~2.24}.
\end{proof}

Recall that if $u\in\go$, then
$G(u)=\set{\gamma\in G:r(\gamma)=u=s(\gamma)}$ is called the
\emph{isotropy group} of $G$ at $u$. We need the following which will
be used repeatedly in the sequel.

\begin{prop}
  \label{prop-iso-iso} Suppose that $Z$ is a $(G,H)$-equivalence.
  Then the isotropy groups $G(r(z))$ and $H(s(z))$ are isomorphic.
\end{prop}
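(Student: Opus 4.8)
The plan is to construct an explicit isomorphism $\phi : G(r(z)) \to H(s(z))$ using the two moment maps. The key point is that for $\gamma \in G(r(z))$, we have $r(\gamma) = r_Z(z) = s(\gamma)$, so $\gamma \cdot z$ is defined and lies in the same $G$-orbit as $z$; moreover $r_Z(\gamma\cdot z) = r(\gamma) = r_Z(z)$. Since $s_Z$ induces a bijection $G\backslash Z \to \ho$ and $r_Z$ induces a bijection $Z/H \to \go$, and since the $G$- and $H$-actions commute, there should be a unique $\eta \in H$ with $(\gamma \cdot z)\cdot \eta = z$. I would set $\phi(\gamma) = \eta$.

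**First I would** verify $\phi$ is well-defined: given $\gamma \in G(r(z))$, check that $\gamma\cdot z$ and $z$ lie in the same $H$-orbit (they have the same image under $s_Z$ — indeed $s_Z(\gamma\cdot z) = s_Z(z)$ since $\gamma$ acts on the left and $s_Z$ is $G$-invariant — and $s_Z$ is injective on $G\backslash Z$, while $\gamma\cdot z$ and $z$ lie in the same $G$-orbit), hence by freeness of the $H$-action there is a unique such $\eta$. Then I would check $\eta \in H(s(z))$: from $(\gamma\cdot z)\cdot \eta = z$ one gets $s_Z(z) = s_Z(\gamma\cdot z)$ forces $r(\eta) = s_Z(\gamma\cdot z) = s_Z(z)$ and $s(\eta) = s_Z(z)$, so $\eta \in H(s_Z(z))$. **Next** I would check $\phi$ is a homomorphism: if $\phi(\gamma) = \eta$ and $\phi(\gamma') = \eta'$, then $(\gamma\gamma'\cdot z)\cdot(\eta'\eta)$ — using commutativity of the actions and associativity — should equal $z$, which by uniqueness gives $\phi(\gamma\gamma') = \eta'\eta$; one checks this is the right multiplicative convention (or adjusts by composing with inversion on one side). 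Bijectivity follows by running the same construction with the roles of $G$ and $H$ swapped (using $Z$ as an $(H,G)$-equivalence via the opposite actions), which produces a two-sided inverse.

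**Then** I would address continuity/topological structure: since the statement only claims the groups are isomorphic as groups (isotropy groups are always closed subgroups, hence locally compact Hausdorff), but given the paper's convention that isomorphism means topological isomorphism, I would check $\phi$ is continuous. This follows because the map $\gamma \mapsto \gamma\cdot z$ is continuous $G(r(z)) \to Z$, the map $z' \mapsto$ (the unique $H$-translation taking $z'$ to $z$, for $z'$ in the $H$-orbit of $z$) is continuous by properness/freeness of the $H$-action — this is essentially the continuity of the translation map on $R(H,Z)$, which holds because $Z$ is a Cartan $H$-space (being proper). Continuity of $\phi^{-1}$ is symmetric. **The main obstacle** I anticipate is bookkeeping the action conventions — left versus right, and whether the natural map comes out as a homomorphism or an anti-homomorphism — since getting $\phi(\gamma\gamma') = \phi(\gamma')\phi(\eta)$ versus $\phi(\gamma)\phi(\gamma')$ wrong by a convention is easy; but this is cosmetic since post-composing with inversion fixes it. The genuinely substantive input is the continuity of the translation maps, which is exactly the earlier lemma characterizing Cartan $G$-spaces, applicable here since equivalences are by definition proper (hence Cartan) on both sides.
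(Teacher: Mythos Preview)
Your approach is essentially the paper's: both construct the isomorphism via the translation map for the right $H$-action, define the inverse symmetrically via the $G$-translation map, and invoke continuity of translation maps (proper implies Cartan). Two small points. First, your well-definedness argument is garbled: to see that $\gamma\cdot z$ and $z$ lie in the same $H$-orbit you want $r_{Z}(\gamma\cdot z)=r(\gamma)=r_{Z}(z)$ together with the fact that $r_{Z}$ is injective on $Z/H$; your appeal to $s_{Z}$ and $G$-orbits establishes nothing here. Second, the paper chooses the cleaner normalization $z\cdot\phi(g)=g\cdot z$ (equivalently $\phi(g)=\tau_{H}(z,g\cdot z)$), which yields a genuine homomorphism directly via
\[
z\cdot\phi(gg')=(gg')\cdot z=g\cdot(z\cdot\phi(g'))=(g\cdot z)\cdot\phi(g')=z\cdot\bigl(\phi(g)\phi(g')\bigr),
\]
whereas your convention $(\gamma\cdot z)\cdot\eta=z$ produces an anti-homomorphism, as you anticipated.
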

\begin{proof}
  This is \cite{wil:toolkit}*{Ex~2.3.7}.  We supply a proof for
  convenience. Let $u=r(z)$ and $v=s(z)$.  Define $\phi:G(u)\to H(v)$
  by $\phi(g)=\tau_{H}(z,g\cdot z)$ where $\tau_{H}$ is the
  translation map for the free and proper right $H$-action on $Z$ (see
  \cite{wil:toolkit}*{Lemma~2.42}).  Then $\phi$ is continuous and
  $\phi(g)$ is the unique element of $H(v)$ such that
  $z\cdot \phi(g)=g\cdot z$.  Therefore $\phi$ is injective and
  $z\cdot \phi(gg')=(gg')\cdot z=g\cdot (g'\cdot z)=g\cdot (z\cdot
  \phi(g'))=(g\cdot z)\cdot \phi(g')=z\cdot (\phi(g)\phi(g')$.
  Therefore $\phi$ is a homomorphism.  We can also define a continuous
  map $\psi:H(v)\to G(u)$ by $\psi(h)=\tau_{G}(z\cdot h,z)$.  It is
  not hard to check that $\psi=\phi^{-1}$.
\end{proof}

Recall that a groupoid $G$ is principal if the action of $G$ on $\go$
is free. It follows from Proposition~\ref{prop-iso-iso} that
equivalence preserves principal groupoids.  Moreover, if $G$ is a
principal Cartan groupoid, then $\go$ is a free Cartan $G$-space.
Hence using Proposition~\ref{prop-iso-iso}, we get the following
corollary of Theorem~\ref{thm-proper+cartan}(b).

\begin{cor}\label{cor-free-cartan-equi}
  Suppose that $G$ and $H$ are equivalent groupoids.  Then $H$ is a
  principal Cartan groupoid if and only if $G$ is a principal Cartan
  groupoid.
\end{cor}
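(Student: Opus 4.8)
The plan is to obtain the corollary by stitching together two permanence facts that are already in place: first, that being \emph{principal} is preserved under equivalence (the remark made just before the corollary, which is a consequence of Proposition~\ref{prop-iso-iso}), and second, that being \emph{Cartan} is preserved under equivalence, which is exactly Theorem~\ref{thm-proper+cartan}(b). Since a \emph{principal Cartan} groupoid is precisely one whose unit space is a free Cartan $G$-space, once I know each of ``principal'' and ``Cartan'' transfers across an equivalence, their conjunction transfers as well, and that is the statement.

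For the principal half I would argue straight from Proposition~\ref{prop-iso-iso}. Let $Z$ be a $(G,H)$-equivalence. By (E4) the moment map $r_{Z}\colon Z\to\go$ is surjective, so every $u\in\go$ has the form $r(z)$ for some $z\in Z$; symmetrically, by (E5) every $v\in\ho$ has the form $s(z)$ for some $z\in Z$. Proposition~\ref{prop-iso-iso} then says that for each $u\in\go$ there is $v\in\ho$ with $G(u)\cong H(v)$, and conversely; in particular every isotropy group of $G$ is trivial if and only if every isotropy group of $H$ is trivial, i.e.\ $G$ is principal exactly when $H$ is. Combining this with Theorem~\ref{thm-proper+cartan}(b): if $G$ is a principal Cartan groupoid, then $H$ is principal by the above and Cartan by that theorem, hence a principal Cartan groupoid; the reverse implication is the same argument with the roles of $G$ and $H$ exchanged.

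I do not expect a genuine obstacle here, since the corollary is really a bookkeeping consequence of Proposition~\ref{prop-iso-iso} and Theorem~\ref{thm-proper+cartan}(b). The one point I would be careful to state explicitly is that the transfer of triviality of isotropy uses the surjectivity of \emph{both} moment maps (from (E4) and (E5)), so that every isotropy group on each side is matched with one on the other side; absent that surjectivity one could only match isotropy over the images of the moment maps. (Alternatively, one could route the same argument through the blow-up via Remark~\ref{rem-std}, using Lemma~\ref{lem-cartan-blow} together with the observation that the isotropy group of $\gx$ at $(x,f(x),x)$ is isomorphic to $G(f(x))$ and that $f$ is surjective, but the direct argument above is shorter.)
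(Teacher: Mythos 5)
Your argument is correct and is essentially the paper's own: the paper also deduces the corollary by noting that Proposition~\ref{prop-iso-iso} shows equivalence preserves principality (using surjectivity of the moment maps, as you point out) and then invoking Theorem~\ref{thm-proper+cartan}(b) for the Cartan property. Your parenthetical blow-up route is likewise the alternative the paper records in the remark immediately following the corollary.
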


\begin{remark}
  We can prove Corollary~\ref{cor-free-cartan-equi} without reference
  to Theorem~\ref{thm-proper+cartan}(b) as follows.  If $G[X]$ is a
  blow-up of a principal groupoid $G$, then $\gx$ is principal and the
  two translation maps on the unit spaces are related by
  \begin{equation}
    \label{eq:5}
    \tau_{G[X]}(x,y)=\bigl(x,\tau_{G}\bigl(f(x),f(y)\bigr),y\bigr).
  \end{equation}
  Now we can employ Lemma~\ref{lem-cartan-criterion} and use
  Remark~\ref{rem-std} as usual.
\end{remark}

\section{Topologically and Essentially Principal Groupoids}
\label{sec:topol-essent-princ}

There are many situations where having a dense set of points where the
isotropy is trivial is a crucial hypothesis.  Therefore, we make the
following definitions.

\begin{definition}\label{def-top-ess-prin}
  A groupoid $G$ is \emph{topologically principal} if
  $\set{u\in\go:G(u)=\sset u}$ is dense in $\go$.  We say that $G$ is
  \emph{essentially principal} if for every $G$-invariant closed set
  $F\subset \go$, $\set{u\in F:G(u)=\sset u}$ is dense in $F$.
\end{definition}

\begin{remark}
  \label{rem-chaos} The terminology adopted here is far from being
  universally accepted.  In fact, the literature is rather chaotic in
  naming these properties.  What is called here ``topologically
  principal'' has been called ``topologically free'' in many places
  and certainly that term has been conflated with ``essentially free''
  as well.  The second author used ``essentially free'' in place of
  ``essentially principal'' in \cite{wil:toolkit}*{Chap.~11}.  Renault
  used ``essentially free'' in \cite{ren:jot91}*{Definitioin~4.8} for
  the stronger property that points with discretely trivial isotropy
  are dense in closed invariant sets (which was called ``strongly
  essentially free'' in \cite{wil:toolkit}*{Proposition~11.35}.)
  Unfortunately, it seems ``essentially free'' has a different meaning
  in group dynamics.  Renault also uses the term topologically
  principal in \cite{ren:irms08}*{Definition~3.5}, but topologically
  principal was often called essentially principal in articles such as
  \cite{arzren:oaqft97} and topologically free in
  \cite{tom:interplay92}.
\end{remark}

\begin{remark}\label{rem-red-prin}
  Note that $G$ is essentially principal if and only if the reduction,
  $G(F)=\set{\gamma\in G:\text{$r(\gamma)\in F$ and
      $s(\gamma)\in F$}}$, is topologically principal for every
  $G$-invariant closed subset $F\subset \go$. (Since $F$ is saturated,
  $G(F)$ has open range and source maps.)
\end{remark}

\begin{thm}\label{thm-top-ess-prin}
  Suppose that $G$ and $H$ are equivalent groupoids.
  \begin{enumerate}
  \item $G$ is topologically principal if and only if $H$ is
    topologically principal.
  \item $G$ is essentially principal if and only if $H$ is essentially
    principal.
  \end{enumerate}
\end{thm}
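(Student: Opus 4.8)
The plan is to invoke the Standard Technique of Remark~\ref{rem-std}: fix a continuous open surjection $f\colon X\to\go$ and show that $G$ has each property if and only if the blow-up $\gx$ does. Everything rests on one elementary point-set fact: \emph{if $f\colon X\to Y$ is a continuous open surjection and $D\subseteq Y$, then $D$ is dense in $Y$ if and only if $f^{-1}(D)$ is dense in $X$}. For the forward direction, given a nonempty open $U\subseteq X$, the set $f(U)$ is nonempty and open, so it meets $D$; pulling a point of $f(U)\cap D$ back into $U$ exhibits a point of $U\cap f^{-1}(D)$. For the reverse direction, given a nonempty open $V\subseteq Y$, surjectivity makes $f^{-1}(V)$ nonempty and open, hence it meets $f^{-1}(D)=f^{-1}(D)$, so $f^{-1}(V\cap D)\ne\emptyset$ and therefore $V\cap D\ne\emptyset$.

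For part (a): the unit space of $\gx$ is identified with $X$ via $x\mapsto(x,f(x),x)$, and a direct computation (or Proposition~\ref{prop-iso-iso}) shows that the isotropy group of $\gx$ at $x$ is $\set{(x,\gamma,x):\gamma\in G(f(x))}$, which is trivial exactly when $G(f(x))=\sset{f(x)}$. Hence
\begin{equation*}
  \set{x\in X:\gx(x)=\sset x}=f^{-1}\bigl(\set{u\in\go:G(u)=\sset u}\bigr),
\end{equation*}
and the topological fact above gives at once that $\gx$ is topologically principal if and only if $G$ is. Combined with Remark~\ref{rem-std}, this proves (a).

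For part (b): by Remark~\ref{rem-red-prin} it suffices to match up reductions. First I would identify the closed $\gx$-invariant subsets of $X$: since the $\gx$-orbit of $x$ is $\set{x'\in X:f(x')\in G\cdot f(x)}=f^{-1}(G\cdot f(x))$, a subset of $X$ is $\gx$-invariant if and only if it is $f^{-1}(S)$ for some $G$-invariant $S\subseteq\go$, and since $f$ is an open surjection $f^{-1}(S)$ is closed if and only if $S$ is. Next, for a closed $G$-invariant $F\subseteq\go$, invariance of $F$ forces every $(x',\gamma,x)\in\gx$ with $x',x\in f^{-1}(F)$ to have $\gamma\in G(F)$, so $\gx\bigl(f^{-1}(F)\bigr)$ is precisely the blow-up $G(F)[f^{-1}(F)]$ along the restricted moment map $f|\colon f^{-1}(F)\to F$. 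One checks $f|$ is again a continuous open surjection: if $U=V\cap f^{-1}(F)$ with $V$ open in $X$, then $f(U)=f(V)\cap F$, which is open in $F$. Applying the argument of part~(a) to the groupoid $G(F)$ (which has open range and source maps since $F$ is saturated) and this blow-up shows $\gx\bigl(f^{-1}(F)\bigr)$ is topologically principal if and only if $G(F)$ is. Running this over all closed invariant $F$ and invoking Remark~\ref{rem-red-prin} for both $G$ and $\gx$ yields (b).

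The routine parts are the elementary topology; the place to be careful—and the main obstacle—is the bookkeeping in part~(b): confirming that the closed invariant subsets of the blow-up's unit space are exactly the $f$-preimages of the closed invariant subsets of $\go$, that the reduction of a blow-up is the blow-up of the reduction, and that restricting $f$ to such a preimage still yields an open surjection so that part~(a) genuinely applies.
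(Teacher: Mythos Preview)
Your proposal is correct and follows essentially the same approach as the paper: reduce to blow-ups via Remark~\ref{rem-std}, use the isotropy identification $\gx(x)\cong G(f(x))$ for part~(a), and for part~(b) use the correspondence between closed invariant subsets together with the identity $\gx(f^{-1}(F))=G(F)[f^{-1}(F)]$ to reduce to part~(a). Your packaging of part~(a) via the abstract density lemma for continuous open surjections is a tidy reformulation of the paper's net-based argument in Lemma~\ref{lem-gx-top-prin}, and your bijection between closed invariant sets is exactly what the paper establishes piecemeal in the two halves of Corollary~\ref{cor-gx-essen-prin}.
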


As in the previous section, we will work first with blow-ups.  We will
repeatedly use the observation that if $f:X\to\go$ is a continuous
open surjection, then the isotropy group $G[X](x)$ is trivial if and
only if the isotropy group $G(f(x))$ is trivial.

\begin{lemma}
  \label{lem-gx-top-prin} Suppose that $f:X\to\go$ is a continuous,
  open surjection.  Then $G$ is topologically principal if and only if
  $G[X]$ is topologically principal.
\end{lemma}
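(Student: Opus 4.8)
The plan is to reduce the statement to a purely topological fact about $f$. Write $U=\set{u\in\go:G(u)=\sset u}$ and $V=\set{x\in X:\gx(x)=\sset x}$. As noted just before the lemma, the map $\gamma\mapsto(x,\gamma,x)$ is an isomorphism of $G(f(x))$ onto $\gx(x)$, so $x\in V$ precisely when $f(x)\in U$; that is, $V=f^{-1}(U)$. Consequently ``$G$ is topologically principal'' means $U$ is dense in $\go$, ``$\gx$ is topologically principal'' means $f^{-1}(U)$ is dense in $X$, and it suffices to prove that these two density statements are equivalent.

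For the forward implication, assume $U$ is dense in $\go$ and let $W\subseteq X$ be nonempty open. Since $f$ is open, $f(W)$ is a nonempty open subset of $\go$, hence meets $U$; choosing $x\in W$ with $f(x)\in U$ (possible because $f(W)\cap U\neq\emptyset$) produces a point of $f^{-1}(U)\cap W$, so $f^{-1}(U)$ is dense. For the converse, assume $f^{-1}(U)$ is dense in $X$ and let $O\subseteq\go$ be nonempty open. Since $f$ is surjective, $f^{-1}(O)$ is nonempty, and it is open because $f$ is continuous; by density it meets $f^{-1}(U)$, and the $f$-image of a common point lies in $O\cap U$. Hence $U$ is dense in $\go$.

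There is no genuine obstacle here: the argument uses only openness of $f$ (forward direction), continuity and surjectivity of $f$ (converse), together with the identification $V=f^{-1}(U)$. This same short bookkeeping will be recycled almost verbatim for the essentially principal version and for the discretely-trivial-isotropy refinements, the one additional ingredient being a compatibility between $\gx$-invariant closed subsets of $X$ and $G$-invariant closed subsets of $\go$.
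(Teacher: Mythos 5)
Your proof is correct and is essentially the paper's argument: both rest on the observation that $\gx(x)$ is trivial exactly when $G(f(x))$ is, and then transfer density along $f$ using openness in one direction and continuity plus surjectivity in the other. The only difference is cosmetic: the paper lifts convergent nets through the open map $f$ (a Fell's-criterion-style argument), while you phrase the same density transfer directly with open sets via the identification $V=f^{-1}(U)$.
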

\begin{proof}
  If $G$ is topologically principal and $x\in X$, then there are
  $u_{i}\to f(x)$ with $G(u_{i})$ trivial.  Since $f$ is open, we can
  pass to subnet, relabel, and assume that there are $x_{i}\to x$ with
  $f(x_{i})=u_{i}$.  Then $G[X](x_{i})$ is trivial, and it follows
  that $G[X]$ is topologically principal.

  Conversely, suppose that $G[X]$ is topologically principal and that
  $f(x)\in\go$.  Then there is a sequence $(x_{i})\subset X$ with
  $x_{i}\to x$ and each $G[X](x_{i})$ trivial.  But then
  $f(x_{i})\to f(x)$ and each $G(f(x_{i}))$ is trivial.  Therefore $G$
  is topologically principal.
\end{proof}

To get a similar result for essentially principal groupoids, we can
use the observation that if $F\subset \go$ is closed and
$C=f^{-1}(F)$, then the restriction $f\restr C:C\to F$ is open and
\begin{equation}
  \label{eq:1}
  \gx(C)=G(F)[C].
\end{equation}

\begin{cor}
  \label{cor-gx-essen-prin} Suppose that $f:X\to \go$ is a continuus,
  open surjection.  Then $G$ is essentially principal if and only if
  $\gx$ is essentially principal.
\end{cor}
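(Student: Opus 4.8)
The plan is to reduce the statement, via Remark~\ref{rem-red-prin}, to a question about topological principality of reductions, and then transport that question across the blow-up using the identity~\eqref{eq:1} together with Lemma~\ref{lem-gx-top-prin}.

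First I would record the correspondence between invariant closed sets. Since $f$ is a continuous open surjection it is a quotient map, so a set $F\subset\go$ is closed exactly when $C:=f^{-1}(F)$ is closed in $X$. Identifying the unit space of $\gx$ with $X$, the $\gx$-orbit of $y\in X$ is $f^{-1}\bigl(G\cdot f(y)\bigr)$; hence a $G$-invariant $F$ pulls back to a $\gx$-invariant $f^{-1}(F)$, and conversely every $\gx$-invariant closed $D\subset X$ satisfies $D=f^{-1}(F)$ for $F:=f(D)$, with $F$ both $G$-invariant and (again by the quotient map property) closed. For any closed $G$-invariant $F$ with $C=f^{-1}(F)$, the restriction $f\restr C:C\to F$ is a continuous open surjection and $\gx(C)=G(F)[C]$ by~\eqref{eq:1}.

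Granting this, both implications run the same way. If $G$ is essentially principal and $D\subset X$ is $\gx$-invariant and closed, then $D=f^{-1}(F)$ for the closed $G$-invariant set $F=f(D)$, so $G(F)$ is topologically principal by Remark~\ref{rem-red-prin}, and Lemma~\ref{lem-gx-top-prin} applied to $f\restr C:C\to F$ shows that $\gx(D)=G(F)[C]$ is topologically principal; hence $\gx$ is essentially principal. Conversely, if $\gx$ is essentially principal and $F\subset\go$ is closed and $G$-invariant, then $C=f^{-1}(F)$ is $\gx$-invariant and closed, so $G(F)[C]=\gx(C)$ is topologically principal, and Lemma~\ref{lem-gx-top-prin} (again via $f\restr C$) gives that $G(F)$ is topologically principal; hence $G$ is essentially principal.

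The step I expect to need the most care is the description of the $\gx$-invariant closed subsets of $X$ as precisely the preimages $f^{-1}(F)$ of $G$-invariant closed subsets of $\go$---in particular, the claim that $f(D)$ is closed whenever $D$ is. This is exactly the point where the hypothesis that $f$ is \emph{open} (and not merely continuous and surjective) enters; once it is in place, everything else is bookkeeping with~\eqref{eq:1} and Lemma~\ref{lem-gx-top-prin}.
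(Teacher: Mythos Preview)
Your proof is correct and follows essentially the same approach as the paper: both establish the bijection $F\leftrightarrow f^{-1}(F)$ between closed invariant subsets of $\go$ and of $X$, then invoke~\eqref{eq:1} and Lemma~\ref{lem-gx-top-prin} for each direction. The only cosmetic difference is that you deduce closedness of $f(D)$ from the quotient-map property (after first noting $D=f^{-1}(f(D))$), whereas the paper verifies it by a direct net argument using openness of $f$; the content is the same.
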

\begin{proof}
  Suppose that $G[X]$ is essentially principal.  Let $F$ be a closed
  $G$-invariant subset of $\go$.  Then $C:=f^{-1}(F)$ is closed.
  Suppose $x\in C$ and $(x,\gamma,y)\in G[X]$.  Then $f(x)=r(\gamma)$
  and $s(\gamma)=f(y)$.  But $r(\gamma)\in F$ implies
  $s(\gamma)\in F$.  Hence $y\in C$.  Therefore $C$ is
  $G[X]$-invariant.  Then by assumption $\gx(C)$ is topologically
  principal.  By \eqref{eq:1}, $G(F)[C]$ is topologically principal
  and $G(F)$ is too by Lemma~\ref{lem-gx-top-prin}.  Since $F$ was
  arbitrary, this shows that $G$ is essentially principal.

  Conversely, suppose that $G$ is essentially principal and that $C$
  is a closed $\gx$-invariant subset of $X$.  We claim that $F:=f(C)$
  is closed in $\go$.  Suppose $f(x_{i})\to u$ with each $x_{i}\in C$.
  Let $u=f(x)$.  Since $f$ is open, we can pass to subnet, relabel,
  and assume that there are $y_{i}\in X$ with $y_{i}\to x$ and
  $f(y_{i})=f(x_{i})$.  But then $(x_{i},f(x_{i}),y_{i})\in G[X]$.
  Since $C$ is $G[X]$-invariant and $x_{i}\in C$, this means each
  $y_{i}\in C$.  Since $C$ is closed, $x\in C$ and $u\in F$.  Thus $F$
  is closed.

  To see that $F$ is $G$-invariant, suppose $r(\gamma)\in F$.
  Therefore there is a $x\in C$ such that $f(x)=r(\gamma)$.  Let $y$
  be such that $f(y)=s(\gamma)$.  Then $(x,\gamma,y)\in G[X]$ and
  $x\in C$.  Since $C$ is $G[X]$-invariant, we must have $y\in C$.
  This means that $s(\gamma)\in F$, and $F$ is a closed $G$-invariant
  subset of $\go$.

  Suppose that $y\in f^{-1}(F)$.  Then there is a $x\in C$ such that
  $f(x)=f(y)$.  Then $(x,f(x),y)\in G[X]$.  Since $C$ is
  $G[X]$-invariant, $y\in C$.  Hence $C=f^{-1}(F)$.

  By assumption, $G(F)$ is topologically principal.  By
  Lemma~\ref{lem-gx-top-prin}, $G(F)[C]$ is too.  By \eqref{eq:1},
  $\gx(C)$ is also topologically principal.  Since $C$ was arbitrary,
  $\gx$ is essentially principal.
\end{proof}

A locally compact Hausdorff \emph{\'etale} groupoid $G$ is called
  \emph{effective} if the interior of
  $\Iso(G)=\set{\gamma\in G:r(\gamma)=s(\gamma)}$ is reduced to $\go$.
  It follows from \cite{ren:irms08}*{Proposition~3.6} that
  topologically principal \'etale groupoids are always effective, and
  the converse holds when $G$ is also second countable.  However, our
  next example shows that the blow-up of an \'etale groupoid need not
  be \'etale.   Hence the property of being \'etale is not
  preserved by equivalence.
  
  \begin{example}[Blow-Ups of \'Etale Groupoids]
    \label{ex-not-etale} Suppose that $f:X\to\go$ is a continuous,
    open surjection such that there is a $u\in\go$ such that
    $f^{-1}(u)$ is not discrete.  Then $\gx$ is not \'etale.  To see
    this note that there is a sequence
    $(x_{n})\subset f^{-1}(u)\setminus \sset x$ such that
    $x_{n}\to x$.  But then $(x_{n},u,x)\to (x,u,x)$ in $\gx$ and
    $\bigl((x_{n},u,x)\bigr)\subset \gx\setminus \gx^{(0)}$.  Hence
    $\gx^{(0)}$ is not open in $\gx$ and $\gx$ is not \'etale.
    We make some additional comments on equivalence and \'etale
      groupoids in an appendix.
  \end{example}

  However we can still make the following observation.

\begin{remark}
  [Effective] If $G$ and $H$ are equivalent \emph{second countable}
  \'etale groupoids, then one is effective if and only the other is by
  Theorem~\ref{thm-top-ess-prin} and the remarks preceding
  Example~\ref{ex-not-etale}.  We do not know whether this holds if we
  drop the second countability assumption.  Our methods break down
  since Example~\ref{ex-not-etale} shows that the blow-up of an
  \'etale groupoid need not be \'etale. 
\end{remark}

In \cite{ren:jot91}, Renault defines the isotropy of $G$ to be
\emph{discretely trivial at $v\in \go$} if for every compact set
$K\subset G$, there is a neighborhood $V$ of $v$ in $\go$ such that
$u\in V$ implies that $K\cap G(u)\subset \sset u$.  It is not hard to
see that if $G$ is discretely trivial at $v$, then $G(v)=\sset v$---if
not, let $K=\sset \gamma$ for $\gamma\in G(v)\setminus \sset v$.
Since every neighborhood of $v$ contains $v$, we obtain a
contradiction.  If $G$ is \'etale, then the converse holds
\cite{wil:toolkit}*{Lemma~11.19(c)}.  However, the converse fails in
general \cite{wil:toolkit}*{Example~11.23}.  The concept of discrete
triviality plays a key role in extending some simplicity and structure
results to non-\'etale groupoids in \cite{ren:jot91}.  (See also
Corollary~\ref{cor-minimal}.)

\begin{prop}
  \label{prop-discretely-trivial} Suppose that $Z$ is a
  $(G,H)$-equivalence.  Then the isotropy of $G$ is discretely trivial
  at $r(z)$ if and only if the isotropy of $H$ is discretely trivial
  at $s(z)$.
\end{prop}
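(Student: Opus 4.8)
The plan is to reduce to the case of a blow-up via a pointed version of the Standard Technique (Remark~\ref{rem-std}). The central lemma I would isolate is: \emph{if $f\colon X\to\go$ is a continuous open surjection and $x\in X$, then the isotropy of $G$ is discretely trivial at $f(x)$ if and only if the isotropy of $\gx$ is discretely trivial at $x$.} Granting it, the proposition follows quickly. Given a $(G,H)$-equivalence $Z$, the moment maps $r_{Z}\colon Z\to\go$ and $s_{Z}\colon Z\to\ho$ are continuous open surjections by (E4) and (E5), so $G[Z]$ and $H[Z]$ are blow-ups. I would then check that they are isomorphic via $(z_{1},\gamma,z_{2})\mapsto(z_{1},\eta^{-1},z_{2})$, where $\eta\in H$ is the unique element with $\gamma^{-1}\cdot z_{1}=z_{2}\cdot\eta$ (this is the isomorphism implicit in the proof of Theorem~\ref{thm-main-tool}); the relevant point is that on a unit $(z,r_{Z}(z),z)$ it returns $(z,s_{Z}(z),z)$, so after identifying both unit spaces with $Z$ it is the identity. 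Since discrete triviality at a unit is plainly invariant under topological groupoid isomorphism, two applications of the lemma give that $G$ is discretely trivial at $r(z)=r_{Z}(z)$ if and only if $G[Z]$ is discretely trivial at $z$, if and only if $H[Z]$ is discretely trivial at $z$, if and only if $H$ is discretely trivial at $s(z)=s_{Z}(z)$.

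The easy half of the lemma is ``$G$ discretely trivial at $f(x)$ implies $\gx$ discretely trivial at $x$.'' Given a compact set $\mathcal{K}\subseteq\gx$, its image $K$ under the continuous projection $(x',\gamma,y')\mapsto\gamma$ is compact in $G$, so there is a neighbourhood $V$ of $f(x)$ in $\go$ with $K\cap G(u)\subseteq\sset u$ for all $u\in V$. Then $f^{-1}(V)$ is a neighbourhood of $x$, and if $x'\in f^{-1}(V)$ and $(x',\gamma,y')\in\mathcal{K}\cap\gx(x')$, then $x'=y'$ and $\gamma\in K\cap G(f(x'))$ with $f(x')\in V$, forcing $\gamma=f(x')$; so $(x',\gamma,y')$ is the unit $x'$, and $\mathcal{K}\cap\gx(x')\subseteq\sset{x'}$.

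The other half, ``$\gx$ discretely trivial at $x$ implies $G$ discretely trivial at $f(x)$,'' is the crux, and the main obstacle is that $f^{-1}$ need not preserve compactness, so a compact $K\subseteq G$ cannot simply be pulled back into $\gx$. To get around this I would exploit local compactness of $X$: fix a compact neighbourhood $\overline{W}$ of $x$ with interior $W$, and set $\mathcal{K}=\set{(x',\gamma,y')\in\gx:x',y'\in\overline{W},\ \gamma\in K}$, which is compact since $\gx$ is closed in $X\times G\times X$. By hypothesis there is a neighbourhood $\mathcal{V}$ of $x$, which we may take inside $W$, such that $\mathcal{K}\cap\gx(x')\subseteq\sset{x'}$ for every $x'\in\mathcal{V}$. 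Then $V:=f(\mathcal{V})$ is an open neighbourhood of $f(x)$, and if $u=f(x')\in V$ with $x'\in\mathcal{V}$ and $\gamma\in K\cap G(u)$, then $(x',\gamma,x')\in\mathcal{K}\cap\gx(x')$, whence $\gamma=f(x')=u$. Thus $K\cap G(u)\subseteq\sset u$, so $G$ is discretely trivial at $f(x)$, completing the lemma and hence the proof.
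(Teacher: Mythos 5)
Your proposal is correct and follows the paper's route exactly at the structural level: the same reduction via the isomorphism $G[Z]\cong H[Z]$ (your map $(z_{1},\gamma,z_{2})\mapsto(z_{1},\eta^{-1},z_{2})$ agrees with the paper's $\phi(z,\gamma,w)=(z,\tau_{H}(z,\gamma\cdot w),w)$ and sends units to units), together with the same key blow-up lemma, whose forward direction you prove just as the paper does by pushing the compact set forward along $(x',\gamma,y')\mapsto\gamma$. The one genuine divergence is the converse direction of the lemma: the paper argues by contradiction, extracting convergent subnets $u_{V}\to v$, $\gamma_{V}\to\gamma$ from a putative failure and then testing a compact neighborhood of the limit $(y,\gamma,y)$ against discrete triviality of $\gx$, whereas you argue directly, using local compactness of $X$ to form the compact set $\mathcal{K}=\gx\cap(\overline{W}\times K\times\overline{W})$ and then pushing the resulting neighborhood $\mathcal{V}\subset W$ forward to $V=f(\mathcal{V})$ via openness of $f$. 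Your version is correct (note $\mathcal{K}$ is compact because $\gx$ is closed in $X\times G\times X$) and is arguably cleaner, since it avoids nets and subnet extractions entirely; the paper's contradiction argument buys nothing extra here beyond matching the net-based style of the surrounding lemmas.
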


We first prove the result for blow-ups. Define $\cphi:\gx\to G$ by
$\cphi(x,\gamma,y)=\gamma$.  Clearly, $\cphi$ is a groupoid
homomorphism. % \todo{Need to rename $\cphi$ and move to near
% definition of $\gx$.}

\begin{lemma}
  \label{lem-discretely-trivial} Suppose that $f:X\to\go$ is a
  continuous, open surjection.  Then the isotropy of $\gx$ is
  discretely trivial at $y\in X$ if and only if the isotropy of $G$ is
  discretely trivial at $f(y)$.
\end{lemma}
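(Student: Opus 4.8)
The plan is to work through the homomorphism $\cphi:\gx\to G$, $\cphi(x,\gamma,y)=\gamma$, which is the restriction of a coordinate projection and hence continuous. Identifying the unit space of $\gx$ with $X$ via $x\mapsto(x,f(x),x)$, the isotropy group of $\gx$ at $x$ is $\gx(x)=\set{(x,\gamma,x):\gamma\in G(f(x))}$, and $\cphi$ restricts to a bijection of $\gx(x)$ onto $G(f(x))$ carrying the unit $(x,f(x),x)$ to $f(x)$. Thus membership of an element $(x,\gamma,x)$ in the isotropy group translates faithfully between the two groupoids, and the content of the lemma is a compactness/neighborhood bookkeeping problem on either side of $f$ and $\cphi$.

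For the implication that discrete triviality of $G$ at $f(y)$ forces discrete triviality of $\gx$ at $y$, I would take a compact $L\subset\gx$ and set $K=\cphi(L)$, which is compact. Choosing, by hypothesis, a neighborhood $V$ of $f(y)$ in $\go$ with $K\cap G(u)\subset\sset u$ for all $u\in V$, the set $W=f^{-1}(V)$ is a neighborhood of $y$; and if $x\in W$ while $(x,\gamma,x)\in L\cap\gx(x)$, then $\gamma=\cphi(x,\gamma,x)\in K\cap G(f(x))\subset\sset{f(x)}$, so $(x,\gamma,x)$ is the unit at $x$. This direction uses only continuity of $f$ and $\cphi$.

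The substantive direction is the converse: starting from a compact $K\subset G$, I must lift it to a compact subset of $\gx$ that contains every isotropy element over a point near $y$ whose label lies in $K$. Using that $X$ is locally compact Hausdorff, fix a compact neighborhood $U$ of $y$ in $X$ and put $L_0=\set{(x,\gamma,x)\in\gx:x\in U\text{ and }\gamma\in K}$. Then $L_0$ is compact, being the intersection of the compact set $U\times K\times U$ with the three closed subsets $\set{(a,\gamma,b):a=b}$, $\set{(a,\gamma,b):f(a)=r(\gamma)}$ and $\set{(a,\gamma,b):s(\gamma)=f(b)}$ of $X\times G\times X$ (closed because $X$ and $\go$ are Hausdorff and $f,r,s$ are continuous), while $\gx$ carries the relative product topology. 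Applying discrete triviality of $\gx$ at $y$ to $L_0$ yields a neighborhood $W$ of $y$ with $L_0\cap\gx(x)\subset\sset x$ whenever $x\in W$; shrinking, pick an open neighborhood $W'$ of $y$ with $W'\subset W\cap\operatorname{int}(U)$. Because $f$ is open, $V:=f(W')$ is an open neighborhood of $f(y)$ in $\go$, and it works: if $u\in V$ and $\gamma\in K\cap G(u)$, write $u=f(x)$ with $x\in W'$; then $(x,\gamma,x)\in L_0\cap\gx(x)$, so it is the unit at $x$ and $\gamma=u$. Hence $K\cap G(u)\subset\sset u$ for all $u\in V$.

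The only real obstacle is the lift in the last paragraph: one has to produce a compact subset of $\gx$ large enough to detect the relevant isotropy yet with its base points confined near $y$, and then invoke the openness of $f$ to push the resulting neighborhood of $y$ down to a neighborhood of $f(y)$. Everything else is routine manipulation of the identifications of unit spaces and isotropy groups noted at the outset.
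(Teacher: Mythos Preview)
Your argument is correct. The forward direction is identical to the paper's. For the converse, however, you take a genuinely different route: the paper argues by contradiction, assuming $G$ is not discretely trivial at $v=f(y)$, extracting a net $(\gamma_V)$ in the witnessing compact $K$ with $\gamma_V\in G(u_V)\setminus\{u_V\}$ and $u_V\to v$, lifting via openness of $f$ to $x_V\to y$, and then using a compact neighborhood of the limit $(y,\gamma,y)$ in $\gx$ to contradict discrete triviality there. Your proof is direct: you lift $K$ to the explicit compact set $L_0=\{(x,\gamma,x):x\in U,\ \gamma\in K\}$ built from a compact neighborhood $U$ of $y$, apply discrete triviality of $\gx$ to $L_0$, and then push the resulting neighborhood of $y$ down to $\go$ via the openness of $f$. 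Your approach is slightly cleaner in that it avoids nets altogether and makes the role of the compact neighborhood $U$ explicit; the paper's net argument, on the other hand, fits the style of the surrounding lemmas (e.g., Lemma~\ref{lem-proper-blow}) and does not require verifying compactness of a constructed set.
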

\begin{proof}
  Suppose that $G$ is discretely trivial at $v\in\go$ and $f(y)=v$.
  Let $C$ be a compact set in $\gx$.  Then $K:=\cphi(C)$ is compact
  and there is a neighborhood $V$ of $v$ in $\go$ such that $u\in V$
  implies that $K\cap G(u)\subset \sset u$.  Let $V'=f^{-1}(V)$.  Then
  $V'$ is a neighborhood of $y$.  Suppose that $x\in V'$ and
  $\sigma=(x,\gamma,x)\in C\cap \gx(x)$.  Then
  $\gamma\in G(f(x))\cap K$.  Therefore $\gamma=f(x)$ and
  $\sigma=(x,f(x),x)$.  Since $C$ was arbitrary, $\gx$ is discretely
  trivial at $y$.

  Conversely, suppose that $\gx$ is discretely trivial at $y\in X$.
  Let $v=f(y)$ and suppose to the contrary of what we want to prove
  that $G$ is not discretely trivial at $v$.  Then there is a compact
  set $K$ such that given any neighborhood $V$ of $v$ in $\go$, there
  is $u_{V}\in V$ and $\gamma_{V}\not= u_{V}$ in $K\cap G(u_{V})$.
  After passing to a subnet, and relabeling, we can assume that
  $u_{V}\to v$, $\gamma_{V} \to \gamma \in G(v)$.  Furthermore, since
  $f$ is open, we can also assume there are $x_{V}\to y$ in $X$ with
  $f(x_{V})=u_{V}$.  Then
  $(x_{V},\gamma_{V},x_{V})\in \gx(x_{V})\subset \gx$ and
  $(x_{V},\gamma_{V},x_{V}) \to (y,\gamma,y)$.  Let $C$ be a compact
  neighborhood of $(y,\gamma,y)$ in $\gx$.  Since $\gx$ is discretely
  trivial at $y$ there is a neighborhood $V'$ of $y$ such that
  $x\in V'$ implies that $C\cap \gx(x)\subset \sset{(x,f(x),x)}$.
  This leads to a contradiction as we eventually have
  $(x_{V},\gamma_{V},x_{V})$ in $C$.
\end{proof}

\begin{proof}[Proof of Proposition~\ref{prop-discretely-trivial}]
  Let $G[Z]$ be the blow-up of $G$ with respect to the moment map
  $r_{Z}:Z\to\go$ and let $H[Z]$ be the blow-up of $H$ with respect to
  the moment map $s_{Z}:Z\to\ho$.  Then as in the proof of
  \cite{wil:toolkit}*{Theorem~2.52}, we get an isomorphism
  $\phi:G[Z]\to H[Z]$ given by
  \begin{equation}
    \label{eq:2}
    \phi(z,\gamma,w)=(z,\tau_{H}(z,\gamma\cdot w),w).
  \end{equation}
  Since $\phi(z,r_{Z}(z),z)=(z,s_{Z}(z),z)$, the isotropy of $G[Z]$ is
  discretely trivial at $(z,r_{Z}(z),z)$ if and only the isotropy of
  $H[Z]$ is discretely trivial at $(z,s_{Z}(z),z)$.  Now the
  proposition follows from Lemma~\ref{lem-discretely-trivial}.
\end{proof}

Recall that if $G$ is \'etale, then the isotropy at $v\in\go$ is
discretely trivial if and only if $G(v)=\sset v$
\cite{wil:toolkit}*{Lemma~11.19(c)}.  Hence we obtain the following
Corollary.

\begin{cor}
  \label{cor-discretely-trivial} Suppose that $G$ and $H$ are
  equivalent groupoids such that $H$ is \'etale.  Then the isotropy of
  $G$ is discretely trivial at $u\in\go$ if and only if
  $G(u)=\sset u$.
\end{cor}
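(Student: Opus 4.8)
The plan is to chain together three facts that are already in hand: the isomorphism of isotropy groups under equivalence (Proposition~\ref{prop-iso-iso}), the transfer of discrete triviality across an equivalence (Proposition~\ref{prop-discretely-trivial}), and the fact, quoted just above the statement, that for an \'etale groupoid discrete triviality at a point is equivalent to triviality of the isotropy group there (\cite{wil:toolkit}*{Lemma~11.19(c)}). So I would fix a $(G,H)$-equivalence $Z$ and the given point $u\in\go$, and begin by choosing a point of $Z$ lying over $u$: since by (E4) the moment map $r_{Z}$ induces a bijection of $Z/H$ onto $\go$, it is in particular surjective, so there is $z\in Z$ with $r(z)=u$; set $v=s(z)$.

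For the forward implication, the equivalence $Z$ is not actually needed. As observed in the paragraph preceding Proposition~\ref{prop-discretely-trivial}, if the isotropy of $G$ is discretely trivial at $u$ then $G(u)=\sset u$ automatically (otherwise, taking $K=\sset\gamma$ for some $\gamma\in G(u)\setminus\sset u$, every neighborhood of $u$ contains $u$ itself, contradicting discrete triviality). So this direction is immediate.

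For the converse, suppose $G(u)=\sset u$. Applying Proposition~\ref{prop-iso-iso} to $z$ gives $H(v)\cong G(u)$, so $H(v)=\sset v$. Since $H$ is \'etale, \cite{wil:toolkit}*{Lemma~11.19(c)} now says the isotropy of $H$ is discretely trivial at $v=s(z)$. Proposition~\ref{prop-discretely-trivial} then transports this back across $Z$: the isotropy of $G$ is discretely trivial at $r(z)=u$, as wanted. I do not anticipate any genuine obstacle here — the proof is a short composition of cited results — and the only point that needs a word of justification is the existence of a $z\in Z$ with $r(z)=u$, which is exactly the surjectivity of $r_{Z}$ built into (E4).
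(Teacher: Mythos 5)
Your proof is correct and is exactly the argument the paper intends: the forward direction is the general observation (made just before Proposition~\ref{prop-discretely-trivial}) that discrete triviality forces trivial isotropy, and the converse chains Proposition~\ref{prop-iso-iso}, the \'etale case of \cite{wil:toolkit}*{Lemma~11.19(c)} applied to $H$ at $s(z)$, and Proposition~\ref{prop-discretely-trivial} to transport discrete triviality back to $G$ at $u=r(z)$. No gaps; the existence of $z$ with $r(z)=u$ is indeed guaranteed by (E4).
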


\begin{definition}\label{def-str-top-ess-prim}
  A groupoid $G$ is \emph{strongly topologically principal} if the set
  of points with discretely trivial isotropy is dense in $\go$.  We
  say that $G$ is \emph{strongly essentially principal} if the points
  with discretely trivial isotropy are dense in every closed
  $G$-invariant subset of $\go$.
\end{definition}

\begin{remark}
  What we are calling strongly essentially principal here is what
  Renault called essentially free in \cite{ren:jot91}.  Note that if
  $G$ is equivalent to an \'etale groupoid, then
  Definition~\ref{def-str-top-ess-prim} reverts to
  Definition~\ref{def-top-ess-prin} by
  Corollary~\ref{cor-discretely-trivial}.
\end{remark}

With straightforward modifications to Lemma~\ref{lem-gx-top-prin} and
Cor\-ol\-lary~\ref{cor-gx-essen-prin} using
Lem\-ma~\ref{lem-discretely-trivial}, we obtain the following analogue
of Theorem~\ref{thm-top-ess-prin}.

\begin{thm}
  \label{thm-str-top-ess-prin}
  Suppose that $G$ and $H$ are equivalent groupoids.
  \begin{enumerate}
  \item $G$ is strongly topologically principal if and only if $H$ is
    strongly topologically principal.
  \item $G$ is strongly essentially principal if and only if $H$ is
    strongly essentially principal.
  \end{enumerate}
\end{thm}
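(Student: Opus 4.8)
The plan is to follow the strategy of Section~\ref{sec:topol-essent-princ} essentially verbatim, replacing ``trivial isotropy'' everywhere by ``discretely trivial isotropy'' and replacing the elementary observation that $\gx(x)$ is trivial exactly when $G(f(x))$ is by Lemma~\ref{lem-discretely-trivial}. By Remark~\ref{rem-std} it suffices, for an arbitrary continuous open surjection $f\colon X\to\go$, to show that $G$ is strongly topologically principal (resp.\ strongly essentially principal) if and only if $\gx$ is; everything then reduces to proving the blow-up versions of Lemma~\ref{lem-gx-top-prin} and Corollary~\ref{cor-gx-essen-prin}.

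For part~(a) I would argue as in Lemma~\ref{lem-gx-top-prin}. Given $x\in X$ and a neighbourhood $U$ of $x$, openness of $f$ makes $f(U)$ a neighbourhood of $f(x)$; if $G$ is strongly topologically principal, pick $v'\in f(U)$ at which $G$ is discretely trivial, write $v'=f(x')$ with $x'\in U$, and invoke Lemma~\ref{lem-discretely-trivial} to conclude that $\gx$ is discretely trivial at $x'$. Hence $\gx$ is strongly topologically principal. Conversely, if $\gx$ is strongly topologically principal, then for $v\in\go$ and a neighbourhood $N$ of $v$ choose $x\in f^{-1}(v)$ and a point $x'\in f^{-1}(N)$ at which $\gx$ is discretely trivial, and apply Lemma~\ref{lem-discretely-trivial} again to see that $G$ is discretely trivial at $f(x')\in N$.

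For part~(b) I would reuse the bookkeeping from the proof of Corollary~\ref{cor-gx-essen-prin}: if $C\subset X$ is closed and $\gx$-invariant then $F:=f(C)$ is closed and $G$-invariant with $C=f^{-1}(F)$ and $f\restr C\colon C\to F$ an open surjection, while $f^{-1}(F)$ is closed and $\gx$-invariant for every closed $G$-invariant $F$. Given this, the density transfer in each direction is exactly the argument of part~(a) carried out inside $C$ and $F$: assuming $G$ strongly essentially principal and $C$ closed and $\gx$-invariant, for $x\in C$ and a $C$-neighbourhood $M$ of $x$ one pushes a smaller neighbourhood forward along the open map $f\restr C$, finds a point $v'=f(x')$ in it with $x'\in C\cap M$ at which $G$ is discretely trivial, and applies Lemma~\ref{lem-discretely-trivial} to $\gx$ at $x'$; the reverse direction starts from a closed $G$-invariant $F$, its preimage $C=f^{-1}(F)$, a point $x\in f^{-1}(v)$, and pulls a neighbourhood back along the continuous map $f\restr C$. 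Part~(a) is the special case $F=\go$.

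The one place that needs care --- and the reason I would route the density transfer through Lemma~\ref{lem-discretely-trivial} directly, between $\gx$ and $G$, rather than through the isomorphism $\gx(C)=G(F)[C]$ of~\eqref{eq:1} --- is that discrete triviality at a point is defined using \emph{all} compact subsets of the ambient groupoid, so discrete triviality in a reduction $G(F)$ is a priori weaker than discrete triviality in $G$ at the same point. Phrasing the argument entirely in terms of neighbourhoods in $C$ and $\go$ avoids ever having to compare those two notions, and then the proof is no harder than Lemma~\ref{lem-gx-top-prin} and Corollary~\ref{cor-gx-essen-prin}. Finally, Remark~\ref{rem-std} combined with the two blow-up equivalences yields the theorem; alternatively one could assemble the same conclusion from Proposition~\ref{prop-discretely-trivial} applied pointwise over a $(G,H)$-equivalence together with the homeomorphism of orbit spaces.
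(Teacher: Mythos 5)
Your proposal is correct and is essentially the paper's intended argument: the paper's proof of Theorem~\ref{thm-str-top-ess-prin} consists precisely of the instruction to rerun Lemma~\ref{lem-gx-top-prin} and Corollary~\ref{cor-gx-essen-prin} with Lemma~\ref{lem-discretely-trivial} replacing the observation that $\gx(x)$ is trivial exactly when $G(f(x))$ is, which is what you carry out. Your caveat about part~(b) is well taken --- since discrete triviality in a reduction $G(F)$ is a priori weaker than discrete triviality in $G$ at the same point, the analogue of Remark~\ref{rem-red-prin} is not automatic, and transferring density directly between $C$ and $F=f(C)$ via the open map $f\restr{C}$ and Lemma~\ref{lem-discretely-trivial}, rather than through the identification \eqref{eq:1}, is exactly the right way to implement the ``straightforward modifications'' the paper alludes to.
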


\section{Isotropy and Orbits}
\label{sec:orbits-isotropy}

\begin{remark}
  \label{rem-iso-props} One important consequence of
  Proposition~\ref{prop-iso-iso} is that generic properties of the
  isotropy of equivalent groupoids are preserved.  For example, if $G$
  has abelian isotropy, then the same is true of any groupoid $H$
  equivalent to $G$.   Naturally, similar statements can be made if
  the isotropy groups are all  amenable, GCR, or CCR.
\end{remark}

If $G$ is a groupoid, then topology of the orbit space
$G\backslash \go=\go/G$ plays a prominent role is deciphering the
structure of $\cs(G)$.  Hence the following will be useful.

\begin{prop}
  \label{prop-orbit-space} If $G$ and $H$ are equivalent groupoids
  then the orbit spaces $G\backslash \go$ and $H\backslash \ho$ are
  homeomorphic.   Moreover if $Z$ is a $(G,H)$-equivalence, then the
  map $G\cdot r(z)\mapsto s(z)\cdot H$ is well-defined and gives such
  a homeomorphism.
\end{prop}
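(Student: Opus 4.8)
The plan is to work directly with a $(G,H)$-equivalence $Z$ rather than passing to blow-ups, since this produces the explicit map in the statement with no extra bookkeeping. Write $r_{Z}:Z\to\go$ and $s_{Z}:Z\to\ho$ for the moment maps; by (E4) and (E5) these are open, and they are surjective since they induce bijections of $Z/H$ and $G\backslash Z$ onto $\go$ and $\ho$. Let $p_{G}:\go\to G\backslash\go$ and $q_{H}:\ho\to H\backslash\ho$ be the orbit maps, which are continuous open surjections by \cite{wil:toolkit}*{Proposition~2.12} applied to the $G$-space $\go$ and the $H$-space $\ho$. Then $\alpha:=p_{G}\circ r_{Z}:Z\to G\backslash\go$ and $\beta:=q_{H}\circ s_{Z}:Z\to H\backslash\ho$ are continuous open surjections, hence quotient maps.

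The heart of the argument is the claim that $\alpha$ and $\beta$ have exactly the same fibres; that is, for $z,z'\in Z$, the points $r_{Z}(z)$ and $r_{Z}(z')$ lie in the same $G$-orbit if and only if $s_{Z}(z)$ and $s_{Z}(z')$ lie in the same $H$-orbit. For the forward direction, suppose $g\in G$ satisfies $r_{Z}(z')=g\cdot r_{Z}(z)$; since this forces $s(g)=r_{Z}(z)$, the element $z'':=g\cdot z$ is defined, and $r_{Z}(z'')=r(g)=r_{Z}(z')$ while $s_{Z}(z'')=s_{Z}(z)$ because the actions commute. So we may assume $r_{Z}(z)=r_{Z}(z')$, in which case (E4) gives $\eta\in H$ with $z'=z\cdot\eta$, whence $s_{Z}(z)=r(\eta)$ and $s_{Z}(z')=s(\eta)$ lie in the same $H$-orbit. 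The reverse direction is the mirror image, using (E5) and the right $H$-action in place of (E4) and the left $G$-action.

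Given the claim, a standard point-set fact about quotient maps produces a unique homeomorphism $\Phi:G\backslash\go\to H\backslash\ho$ with $\Phi\circ\alpha=\beta$: $\Phi$ is a well-defined bijection because $\alpha$ and $\beta$ are surjective with the same fibres; $\Phi$ is continuous because $\alpha$ is a quotient map and $\beta$ is continuous; and $\Phi^{-1}$ is continuous because $\beta$ is a quotient map and $\alpha$ is continuous. Unwinding the definitions, $\Phi(G\cdot r_{Z}(z))=\Phi(\alpha(z))=\beta(z)=s_{Z}(z)\cdot H$, so $\Phi$ is precisely the assignment $G\cdot r(z)\mapsto s(z)\cdot H$ of the statement; in particular that assignment is well defined and is a homeomorphism. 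Since every $(G,H)$-equivalence yields such a $\Phi$, equivalent groupoids have homeomorphic orbit spaces.

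I expect the only delicate point to be the bookkeeping in the fibre claim: keeping straight which moment-map condition makes $g\cdot z$ and $z\cdot\eta$ defined, and noting that $s_{Z}$ is constant on $G$-orbits while $r_{Z}$ is constant on $H$-orbits (immediate from (E4), (E5) and the commuting of the actions). Everything else is formal. One could instead invoke Remark~\ref{rem-std}, showing $G\backslash\go\cong\gx\backslash X$ via $\gx\cdot x\mapsto G\cdot f(x)$ and combining with Theorem~\ref{thm-main-tool}; but recovering the explicit formula in the statement from the blow-up isomorphism of Theorem~\ref{thm-main-tool} is more work, so I would present the direct argument above.
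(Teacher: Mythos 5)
Your argument is correct and is essentially the paper's own proof: both work directly with the equivalence $Z$ (no blow-ups), using (E4)/(E5) together with the openness of the orbit maps from \cite{wil:toolkit}*{Proposition~2.12}. The only cosmetic difference is that the paper routes the identification through the double quotient, identifying $G\backslash(Z/H)$ with $G\backslash\go$ and $(G\backslash Z)/H$ with $\ho/H$, whereas you check directly that the two quotient maps $p_{G}\circ r_{Z}$ and $q_{H}\circ s_{Z}$ out of $Z$ are open surjections with the same fibres; these amount to the same computation.
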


\begin{proof}
  It suffices to prove the last assertion which is exactly
  \cite{wil:toolkit}*{Lemma~2.41}.  We provide the proof for
  convenience.  If $Z$ is a $(G,H)$-equivalence, then we get a
  $G$-equivariant homeomorphism $\bar r:Z/H\to \go$.  Since the orbit
  maps are open by \cite{wil:toolkit}*{Proposition~2.12}, we get a
  homeomorphism $\underline r :G\backslash (Z/H) \to G\backslash
  \go$.  Similarly, we get a homeomorphism $\underline s:(G\backslash 
  Z)/H\to \ho/H$ such that $\underline s\bigl((G\cdot z)\cdot H\bigr)
  =s(z)\cdot H$.  Since $(G\cdot z)\cdot H\mapsto G\cdot (z\cdot H)$
  gives a homeomorphism of $(G\backslash Z)/H$ onto $G\backslash
  (Z/H)$, this suffices.
\end{proof}

Recall that the action of $G$ on $\go$ is \emph{minimal} if every
orbit $G\cdot u$ is dense in $\go$.   Since this is equivalent to
saying that every point in $G\backslash \go$ is dense, the next
corollary is immediate.

\begin{cor}
  \label{cor-minimal} Suppose that $G$ and $H$ are equivalent
  groupoids.  Then $G$ acts minimally on $\go$ if and only if $H$ acts
  minimally on $\ho$.
\end{cor}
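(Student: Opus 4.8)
The plan is to reduce the statement to the homeomorphism of orbit spaces furnished by Proposition~\ref{prop-orbit-space}, exactly as suggested by the remark preceding the corollary. The first step is to make precise the claim that the $G$-action on $\go$ is minimal if and only if every point of the quotient $G\backslash\go$ is dense. Writing $q\colon\go\to G\backslash\go$ for the orbit map, which is continuous and open by \cite{wil:toolkit}*{Proposition~2.12}, the nonempty open subsets of $G\backslash\go$ are precisely the images $q(U)$ of nonempty $G$-saturated open sets $U\subset\go$; and for such a saturated $U$ one has $G\cdot u\cap U\neq\emptyset$ if and only if $G\cdot u\subset U$. Hence $\overline{G\cdot u}=\go$ for every $u$ exactly when every nonempty open subset of $G\backslash\go$ contains every point $q(u)=G\cdot u$, i.e.\ exactly when every point of $G\backslash\go$ is dense.

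The second step is to observe that the property ``every point of the space is dense'' is a topological property, hence transported across the homeomorphism $G\backslash\go\cong H\backslash\ho$ of Proposition~\ref{prop-orbit-space}: a continuous surjection carries a dense point to a dense point, and applying this to the homeomorphism and to its inverse shows that $G\backslash\go$ has all points dense if and only if $H\backslash\ho$ does. Concatenating the equivalence of the first step applied to $G$, this observation, and the equivalence of the first step applied to $H$, one concludes that $G$ acts minimally on $\go$ if and only if $H$ acts minimally on $\ho$.

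I do not anticipate any real obstacle here: once Proposition~\ref{prop-orbit-space} is in hand the argument is purely formal, and the only mildly non-trivial ingredient is the openness of the orbit map invoked in the first step, which is already recorded in \cite{wil:toolkit}. If one prefers to avoid even this, one can bypass the orbit space entirely by running the ``Standard Technique'' of Remark~\ref{rem-std}: a direct check shows $G$ acts minimally on $\go$ if and only if the blow-up $G[X]$ acts minimally on $X$ (a $G[X]$-saturated subset of $X$ is exactly $f^{-1}$ of a $G$-saturated subset of $\go$, and $f$ is continuous and open), after which the result follows from Theorem~\ref{thm-main-tool}.
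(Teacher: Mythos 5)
Your proposal is correct and follows the paper's own route: the paper likewise notes that minimality is equivalent to every point of $G\backslash\go$ being dense and then invokes the homeomorphism of orbit spaces from Proposition~\ref{prop-orbit-space}. You merely spell out the (correct) details of that equivalence via the open orbit map, so there is nothing to add.
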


In the same spirit, recall that a topological space is
$T_{0}$ if distinct points have distinct closures, $T_{1}$ if points
are closed, and $T_{2}$ if the space is Hausdorff.  Applied to the
orbit space $G\backslash \go$, the later is $T_{0}$ exactly when
distinct orbits in $\go$ have distinct closures.  It is $T_{1}$ when orbits
are closed in $\go$.  Now we can apply
Proposition~\ref{prop-orbit-space} to obtain the following.

\begin{cor}
  \label{cor-tone-orbit-space}
  Suppose that $G$ and $H$ are equivalent groupoids.  Then
  $G\backslash \go$ is
  $T_{0}$ (resp., $T_{1}$, resp., $T_{2}$) if and only if
  $H\backslash\ho$ is $T_{0}$ 
  (resp., $T_{1}$, resp., $T_{2}$). 
\end{cor}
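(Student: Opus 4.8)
The plan is to read this off directly from Proposition~\ref{prop-orbit-space}. That result furnishes a homeomorphism $\Phi: G\backslash\go\to H\backslash\ho$ — concretely, given a $(G,H)$-equivalence $Z$, the map $G\cdot r(z)\mapsto s(z)\cdot H$. Since being $T_{0}$, $T_{1}$, or $T_{2}$ is a topological property, i.e.\ an invariant of the homeomorphism type of a space, each of the three properties transfers across $\Phi$ in both directions, and all three equivalences follow simultaneously.

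If one wants to spell it out: a homeomorphism is a bijection that preserves closures, $\Phi(\overline{A})=\overline{\Phi(A)}$ for every subset $A$. Hence $G\backslash\go$ has the property that distinct points have distinct closures precisely when $H\backslash\ho$ does (this is $T_{0}$); singletons are closed in one precisely when they are closed in the other (this is $T_{1}$); and one space is Hausdorff precisely when the other is (this is $T_{2}$). The reformulations recorded in the text just before the statement — that $T_{0}$ for $G\backslash\go$ means distinct orbits in $\go$ have distinct closures, and $T_{1}$ means orbits are closed in $\go$ — are then automatic and require no input about equivalence.

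I do not anticipate any obstacle here: all the genuine work has already been carried out in establishing the homeomorphism of orbit spaces in Proposition~\ref{prop-orbit-space}, and the corollary is merely the observation that the separation axioms are homeomorphism invariants. In particular, in contrast with the earlier permanence results of Sections~\ref{sec:prop-cart-group} and~\ref{sec:topol-essent-princ}, there is no need to pass to a blow-up or to invoke the Standard Technique of Remark~\ref{rem-std}; the statement is an immediate consequence of the preceding proposition, exactly as Corollary~\ref{cor-minimal} was.
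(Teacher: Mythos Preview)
Your proposal is correct and matches the paper's approach exactly: the paper does not even write out a proof, simply presenting the corollary as an immediate application of Proposition~\ref{prop-orbit-space}, which is precisely what you do.
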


% \begin{remark}
%   It is possible to give a direct proof of
%   Corollary~\ref{cor-tone-orbit-space} using blow-ups, but it is more
%   straightforward to simply refer to
%   Proposition~\ref{prop-orbit-space}.
% \end{remark}

Recall from \cite{wil:toolkit}*{\S3.4} that the space $\so$ of closed
subgroups of a groupoid $G$ has a locally compact topology which is
the relative topology coming from the Fell topology on the compact
space $\mathcal C(G)$ of closed subsets of $G$.  We say that the
isotropy of $G$ is continuous at $u\in\go$ if $G(u_{n})\to G(u)$ in
$\so$ for any net $(u_{n})$ in $\go$ converging to $u$ in $\go$.

\begin{prop}
  \label{prop-cts-iso} Suppose that $Z$ is a $(G,H)$-equivalence.
  Then the isotropy of $G$ is continuous at $r(z)$ if and only if the
  isotropy of $H$ is continuous at $s(z)$.
\end{prop}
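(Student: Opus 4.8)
The plan is to apply the Standard Technique of Remark~\ref{rem-std}, reducing to blow-ups exactly as in the proof of Proposition~\ref{prop-discretely-trivial}. The heart of the argument is the following blow-up statement: if $f:X\to\go$ is a continuous open surjection and $x\in X$, then the isotropy of $\gx$ is continuous at $x$ if and only if the isotropy of $G$ is continuous at $f(x)$. Granting this, I would finish as follows. Let $G[Z]$ be the blow-up of $G$ along $r_{Z}$ and $H[Z]$ the blow-up of $H$ along $s_{Z}$, and let $\phi:G[Z]\to H[Z]$ be the isomorphism from \eqref{eq:2}, which sends the unit $(z,r_{Z}(z),z)$ to $(z,s_{Z}(z),z)$ and restricts to the identity on the unit spaces once both are identified with $Z$. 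An isomorphism of topological groupoids carries closed subgroups to closed subgroups and induces a homeomorphism between the corresponding spaces of closed subgroups (each with its Fell topology) which intertwines the isotropy maps; hence the isotropy of $G[Z]$ is continuous at $(z,r_{Z}(z),z)$ if and only if the isotropy of $H[Z]$ is continuous at $(z,s_{Z}(z),z)$. Combining this with the blow-up statement, applied once to $r_{Z}$ and once to $s_{Z}$, gives the chain of equivalences linking continuity of the isotropy of $G$ at $r(z)$ with continuity of the isotropy of $H$ at $s(z)$.

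For the blow-up statement, note first that $\gx(x)=\set{(x,\gamma,x):\gamma\in G(f(x))}$, so that $\cphi$ restricts to a homeomorphism of $\gx(x)$ onto $G(f(x))$. I would then work directly with the description of convergence in the Fell topology: a net of closed sets $C_{n}$ converges to $C$ precisely when (i) every element of $C$ is a limit of a net of elements $\gamma_{n}\in C_{n}$, and (ii) any subnet limit of elements $\gamma_{n}\in C_{n}$ lies in $C$. For the forward direction, suppose the isotropy of $G$ is continuous at $v=f(x)$ and let $x_{n}\to x$ in $X$; then $f(x_{n})\to v$, so $G(f(x_{n}))\to G(v)$, and since convergence in $\gx\subset X\times G\times X$ is coordinatewise, conditions (i) and (ii) for $\gx(x_{n})\to\gx(x)$ follow at once by transporting conditions (i) and (ii) for $G(f(x_{n}))\to G(v)$ through the map $\gamma\mapsto(x_{n},\gamma,x_{n})$. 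For the converse, suppose the isotropy of $\gx$ is continuous at $x$, put $v=f(x)$, and let $v_{n}\to v$ in $\go$; to prove $G(v_{n})\to G(v)$ it is enough to show that every subnet of $(G(v_{n}))$ has a further subnet converging to $G(v)$. Given a subnet, use the openness of $f$ and Fell's Criterion (as in the proofs of Lemma~\ref{lem-proper-blow} and Lemma~\ref{lem-discretely-trivial}) to pass to a further subnet along which there are $x_{n}\to x$ in $X$ with $f(x_{n})=v_{n}$; then $\gx(x_{n})\to\gx(x)$ by hypothesis, and conditions (i) and (ii) for $G(v_{n})\to G(v)$ are read off coordinatewise from the corresponding conditions for $\gx(x_{n})\to\gx(x)$.

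The main obstacle is the bookkeeping in this converse step: because $f$ need not be injective, one cannot lift the net $v_{n}\to v$ to a net $x_{n}\to x$ directly, so the argument must be organized around the criterion that a net converges if and only if every subnet has a convergent further subnet, with the openness of $f$ invoked at exactly the right point to produce the lifts. Once that is set up, the remaining ingredients---that coordinatewise limits in $\gx$ correspond to limits of the middle coordinate in $G$, and that a topological groupoid isomorphism induces a homeomorphism of the spaces of closed subgroups intertwining the isotropy maps---are routine.
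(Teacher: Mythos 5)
Your proposal is correct and follows essentially the same route as the paper: the paper proves exactly your blow-up statement (Lemma~\ref{lem-cts-iso}) using the two Fell-topology convergence conditions of \cite{wil:toolkit}*{Lemma~3.22}, with the same subnet/openness-of-$f$ device in the converse direction, and then concludes via the isomorphism $\phi:G[Z]\to H[Z]$ of \eqref{eq:2} exactly as in the proof of Proposition~\ref{prop-discretely-trivial}. No substantive differences.
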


Before proceeding with the proof,
we need a preliminary result involving blow-ups.   For a blow-up
$\gx$, the isotropy groups are related as follows:
\begin{equation}
  \label{eq:4a}
  \gx(x)=\set{(x,\gamma,x)\in\gx:\gamma\in G(f(x))}.
\end{equation}

\begin{lemma}
  \label{lem-cts-iso} Suppose that $f:X\to\go$ is a continuous open
  surjection.  Then the isotropy of $G$ is continuous at $f(x)$ if and
  only if the isotropy of $\gx$ is continuous at $x$.
\end{lemma}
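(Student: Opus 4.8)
The plan is to argue entirely inside the Fell topology on the space $\mathcal C(G)$ of closed subsets of $G$ (and likewise for $\gx$); recall that $\so$ carries the relative topology from $\mathcal C(G)$, and that a net of closed sets $C_{n}$ converges to $C$ in this topology precisely when \textup{(L)} every $c\in C$ is the limit of a net $c_{n}\in C_{n}$, and \textup{(U)} for every compact $K$ with $K\cap C=\emptyset$ we eventually have $K\cap C_{n}=\emptyset$. In both directions I will check \textup{(L)} and \textup{(U)}, moving back and forth via \eqref{eq:4a}, which identifies $\gx(x)$ with $\set{x}\times G(f(x))\times\set{x}$ inside $X\times G\times X$. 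The single elementary fact used over and over is that, since $X\times G\times X$ is Hausdorff, a net $(x_{n},\gamma_{n},x_{n})$ with $x_{n}\to x$ can only converge to a point of the form $(x,\gamma,x)$.

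For the forward implication, suppose the isotropy of $G$ is continuous at $f(x)$ and take a net $x_{n}\to x$ in $X$; then $f(x_{n})\to f(x)$, so $G(f(x_{n}))\to G(f(x))$ in $\so$, and we must produce $\gx(x_{n})\to\gx(x)$. For \textup{(L)}: given $(x,\gamma,x)\in\gx(x)$, choose by \textup{(L)} for $G(f(x_{n}))\to G(f(x))$ elements $\gamma_{n}\in G(f(x_{n}))$ with $\gamma_{n}\to\gamma$; then $(x_{n},\gamma_{n},x_{n})\in\gx(x_{n})$ tends to $(x,\gamma,x)$. For \textup{(U)}: if some compact $C\subset\gx$ disjoint from $\gx(x)$ met $\gx(x_{n})$ cofinally, a subnet of the corresponding $(x_{n_{j}},\gamma_{n_{j}},x_{n_{j}})$ would converge in $C$ to some $(x,\gamma,x)$, and $\gamma\in G(f(x))$ because $G(f(x_{n}))\to G(f(x))$, giving a point of $C\cap\gx(x)$, a contradiction. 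This direction uses only the continuity of $f$.

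For the converse, suppose the isotropy of $\gx$ is continuous at $x$, set $u=f(x)$, and let $u_{n}\to u$ in $\go$; we want $G(u_{n})\to G(u)$. I will use the standard fact that it suffices to show every subnet of $(u_{n})$ has a further subnet along which $G(u_{n_{j}})\to G(u)$. Given such a subnet, openness of $f$ and Fell's Criterion \cite{wil:toolkit}*{Proposition~1.1} allow me to pass to a further subnet, relabel, and pick $x_{n}\in X$ with $f(x_{n})=u_{n}$ and $x_{n}\to x$; then $\gx(x_{n})\to\gx(x)$ by hypothesis. Transferring this to $G(u_{n})\to G(u)$ via \eqref{eq:4a} is again two checks: \textup{(L)} lift $\gamma\in G(u)$ to $(x,\gamma,x)\in\gx(x)$, pull back an approximating net $(x_{n},\gamma_{n},x_{n})\in\gx(x_{n})$, and keep the middle coordinates $\gamma_{n}\in G(u_{n})$; \textup{(U)} if a compact $K\subset G$ disjoint from $G(u)$ met $G(u_{n_{j}})$ cofinally along a subnet, extract from $K$ a subnet $\gamma_{n_{j}}\to\gamma\in K$, note $(x_{n_{j}},\gamma_{n_{j}},x_{n_{j}})\in\gx(x_{n_{j}})$ converges to $(x,\gamma,x)$, and deduce $\gamma\in G(u)$ from $\gx(x_{n})\to\gx(x)$, contradicting $K\cap G(u)=\emptyset$.

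The one point needing care is the subnet bookkeeping in the converse: Fell's Criterion only supplies the lifts $x_{n}$ after passing to a subnet, whereas $G(u_{n})\to G(u)$ concerns the whole net. The subnet characterization of convergence resolves this cleanly, provided one checks that each extraction can be steered to the single limit $G(u)$ — which it can, since the lift is chosen so that $x_{n}\to x$ with $f(x)=u$. Everything else is the routine identification of the subspace topology on $\gx\subset X\times G\times X$ with the product topology, together with the Hausdorff observation above.
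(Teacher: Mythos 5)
Your proposal is correct and follows essentially the same route as the paper: transfer between $G(f(x))$ and $\gx(x)$ via \eqref{eq:4a}, check the two halves of Fell-topology convergence (your (L)/(U) formulation is equivalent to the subnet criterion of \cite{wil:toolkit}*{Lemma~3.22} that the paper invokes), and in the converse use openness of $f$ plus the ``every subnet has a convergent sub-subnet'' device to handle the fact that lifts $x_{n}\to x$ only exist after passing to a subnet. The only cosmetic difference is that you make the subnet bookkeeping in the converse fully explicit, where the paper simply says it has already passed to a subnet and relabeled.
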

\begin{proof}
  Let $u=f(x)$.  Suppose that the isotropy is continuous at $u$ and
  that $(x_{i})$ is a net converging to $x$ in $X$.  We claim that
  $\gx(x_{i})\to \gx(x)$.  We apply the criteria in
  \cite{wil:toolkit}*{Lemma~3.22}.  We can assume that we have already
  passed to a subnet and relabeled.  Suppose that
  $(x_{i},\gamma_{i},x_{i}) \in \gx(x_{i})$ and
  $(x_{i},\gamma_{i},x_{i})\to (y,\gamma,z)$ in $\gx$.  Then we must
  have $y=x=z$.  Furthermore $\gamma_{i}\in G(f(x_{i}))$ and
  $\gamma_{i}\to \gamma$.  By assumption, $G(f(x_{i}))\to G(f(x))$.
  Hence $\gamma\in G(f(x))$.  Thus part~(a) of
  \cite{wil:toolkit}*{Lemma~3.22} is satisfied.

  For part~(b), suppose $(x,\gamma,x)\in\gx(x)$.  Then
  $\gamma\in G(f(x))$ and we still have $G(f(x_{i}))\to G(f(x))$.  Hence
  we can pass to a subnet, relabel, and assume that there are
  $\gamma_{i}\in G(f(x_{i}))$ with $\gamma_{i}\to \gamma$.  Then
  $(x_{i},\gamma_{i},x_{i})\to (x,\gamma,x)$ and we have shown that
  the isotropy is continuous at $x$.

  Conversely, suppose that the isotropy of $\gx$ is continuous at $x$
  and that $u_{i}\to u=f(x)$ in $\go$.  We want to show that
  $G(u_{i})\to G(u)$.  Again, we can assume that we have passed to a
  subnet and relabeled.  Since $f$ is open, we can pass to a further
  subnet if necessary, and assume that there are $x_{i}\in X$ such
  that $x_{i}\to x$ and $f(x_{i})=u_{i}$.  Then by assumption,
  $\gx(x_{i})\to \gx(x)$.  If $\gamma_{i}\in G(u_{i})$ and
  $\gamma_{i}\to \gamma$ in $G$, then
  $(x_{i},\gamma_{i},x_{i})\to (x,\gamma,x)$ in $\gx$.  Hence,
  $(x,\gamma,x)\in\gx(x)$ and $\gamma\in G(u)$.  If $\gamma\in G(u)$,
  then $(x,\gamma,x)\in \gx(x)$ and we can pass to a subnet, relabel,
  and find $(x_{i},\gamma_{i},x_{i}) \in\gx(x_{i})$ such that
  $(x_{i},\gamma_{i},x_{i})\to (x,\gamma,x)$.  But then,
  $\gamma_{i}\to\gamma$ and $G(u_{i})\to G(u)$ as claimed.
\end{proof}

\begin{proof}[Proof of Proposition~\ref{prop-cts-iso}]
  % Let $G[Z]$ be the blow-up of $G$ with respect to the moment map
%   $r_{Z}:Z\to\go$ and let $H[Z]$ be the blow-up of $H$ with respect to
%   the moment map $s_{Z}:Z\to\ho$.   Then as in the proof of
%   \cite{wil:toolkit}*{Theorem~2.52}, we get an isomorphism
%   $\phi:G[Z]\to H[Z]$ given by
%   \begin{equation}
%     \label{eq:2aa}
%     \phi(z,\gamma,w)=(z,\tau_{H}(z,\gamma\cdot w),w).
%   \end{equation}
%   Since $\phi(z,r_{Z}(z),z)=(z,s_{Z}(z),z)$,
%   the isotropy of $G[Z]$ is continuous at $(z,r_{Z}(z),z)$ if
% and only the isotropy of $H[Z]$ is  continuous at $(z,s_{Z}(z),z)$.
% Now the proposition follows from Lemma~\ref{lem-cts-iso}.
  The result now follows using the proof of
  \cite{wil:toolkit}*{Theorem~2.52} exactly as in the proof of
  Proposition~\ref{prop-discretely-trivial}. 
\end{proof}

\begin{cor}
  \label{cor-cts-iso} Suppose that $G$ and $H$ are equivalent
  groupoids.   Then $u\mapsto G(u)$ is continuous on $\go$ if and only
  if $v\mapsto H(v)$ is continuous on $\ho$.
\end{cor}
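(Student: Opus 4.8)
The plan is to deduce Corollary~\ref{cor-cts-iso} directly from Proposition~\ref{prop-cts-iso}, with no further work on blow-ups. Fix a $(G,H)$-equivalence $Z$. The one structural fact I would isolate first is that the moment maps $r_{Z}:Z\to\go$ and $s_{Z}:Z\to\ho$ are surjective: by conditions (E4) and (E5) of Definition~\ref{def-equivalence}, $r_{Z}$ descends to a homeomorphism of $Z/H$ onto $\go$ and $s_{Z}$ descends to a homeomorphism of $G\backslash Z$ onto $\ho$, and both of these are in particular onto. Hence every point of $\go$ is of the form $r(z)$ and every point of $\ho$ is of the form $s(z)$ for some $z\in Z$.

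With that in hand, suppose that $u\mapsto G(u)$ is continuous at every $u\in\go$, and let $v\in\ho$ be arbitrary. Choose $z\in Z$ with $s(z)=v$. The isotropy of $G$ is continuous at $r(z)$ by hypothesis, so Proposition~\ref{prop-cts-iso} gives that the isotropy of $H$ is continuous at $s(z)=v$. Since $v$ was arbitrary, $v\mapsto H(v)$ is continuous on $\ho$. The reverse implication is symmetric: given $u\in\go$, pick $z\in Z$ with $r(z)=u$, use continuity of the isotropy of $H$ at $s(z)$ together with Proposition~\ref{prop-cts-iso} to conclude continuity of $u\mapsto G(u)$ at $u$.

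There is essentially no obstacle here; all the content lives in Proposition~\ref{prop-cts-iso}, which is itself reduced to the blow-up statement of Lemma~\ref{lem-cts-iso}. The only step that needs a moment's care is the surjectivity of the moment maps, so that the pointwise equivalence ``continuous at $r(z)$ iff continuous at $s(z)$'' really does exhaust all points of the two unit spaces; but this is immediate from the definition of equivalence.
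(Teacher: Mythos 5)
Your proof is correct and is exactly the intended argument: the corollary follows from Proposition~\ref{prop-cts-iso} once one notes that the moment maps of an equivalence are surjective, so every point of $\go$ is of the form $r(z)$ and every point of $\ho$ of the form $s(z)$. The paper leaves this deduction implicit, and your write-up fills in precisely that routine step.
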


As an application of these ideas, we recall a condition on a groupoid
with abelian isotropy the authors
introduced in \cite{wykwil:iumj22}---called \emph{proper modulo the
  isotropy}---generalizing a proper groupoid with abelian
  isotropy.  This allowed us to describe the primitive ideal space
$\operatorname{Prim}(\cs(G))$ as a topological space, generalizing
work of \cite{echeme:em11} and \cite{neu:phd11} in the case of abelian
isotropy.

We outline the details from  \cite{wykwil:iumj22}*{\S9}. We let
$R\subset \go\times\go$ be the image of $\pi:G\to\go\times\go$ where
$\pi(\gamma) = \bigl(r(\gamma),s(\gamma)\bigr)$.   Then with the
relative product topology, $R$ is a topological groupoid which can
violate our standing assumptions as $R$ need not be locally
compact, nor need it have open range and source maps.   However, if
$G\backslash \go$ is Hausdorff, then $R$ is at least closed in
$\go\times\go$.  If $G$ has abelian
isotropy, then there is a well-defined action of $R$ on $\so$ given by
$\pi(\gamma)\cdot H=\gamma\cdot H=\gamma H \gamma^{-1}$.  We say that
\emph{$G$ is proper modulo its isotropy} if $G$ has abelian isotropy,
$G\backslash \go$ is Hausdorff,
and $R$ acts continuously on $\so$.  As observed in
\cite{wykwil:iumj22}*{Example~9.5}, there is a large class of groupoids,
such as those studied in \cites{iksw:jot19,ikrsw:jfa20,ikrsw:nzjm21},
that are proper modulo their isotropy.

For the remainder of this section we assume that $G$ has abelian
isotropy.   Then if $H$ is equivalent to $G$, it has abelian isotropy
as well by Proposition~\ref{prop-iso-iso}. 

\begin{thm}
  \label{thm-proper-mod-iso} Suppose that $G$ and $H$ are equivalent
  groupoids.   Then $G$ is proper modulo its isotropy if and only if $H$
  is proper modulo its isotropy.
\end{thm}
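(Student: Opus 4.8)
The plan is to invoke the Standard Technique of Remark~\ref{rem-std}: by Theorem~\ref{thm-main-tool} it suffices to prove that, for a continuous open surjection $f\colon X\to\go$, the groupoid $G$ is proper modulo its isotropy if and only if the blow-up $\gx$ is. Being proper modulo the isotropy packages three conditions --- abelian isotropy, Hausdorffness of the orbit space, and continuity of the $R$-action on $\so$ --- so I would treat these one at a time. Abelian isotropy passes between $G$ and $\gx$ for free, since $\gx(x)=\set{(x,\gamma,x):\gamma\in G(f(x))}$ is group-isomorphic to $G(f(x))$ and $f$ is onto. Hausdorffness of the orbit space is exactly the ``$T_2$'' case of Corollary~\ref{cor-tone-orbit-space} applied to the equivalent groupoids $G$ and $\gx$. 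That leaves the continuity of the $R$-action as the substantive point.

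For that I would first describe how $R$ and $\so$ behave under blow-up. Writing $R_G\subseteq\go\times\go$ for the image of $\pi$, the corresponding set $R_{\gx}\subseteq X\times X$ for $\gx$ is just $(f\times f)^{-1}(R_G)$, and $(x,y)\mapsto\bigl(x,(f(x),f(y)),y\bigr)$ is a homeomorphism of $R_{\gx}$ onto $R_G[X]=\set{(x,(u,v),y)\in X\times R_G\times X:f(x)=u,\ f(y)=v}$. Likewise, since a subgroup of a groupoid lies in a single isotropy group, every closed subgroup of $\gx$ has the form $\set{x}\times K\times\set{x}$ for a closed subgroup $K\leq G(f(x))$, and --- using the Fell-topology criteria of \cite{wil:toolkit}*{Lemma~3.22} just as in the proof of Lemma~\ref{lem-cts-iso} --- the map $\set{x}\times K\times\set{x}\mapsto(x,K)$ identifies $\so(\gx)$ homeomorphically with the fibred product $X\times_f\so(G)$ formed over $\go$ via $f$ and the (continuous) map $\so(G)\to\go$ sending a subgroup to its unit. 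Under these identifications the $R_{\gx}$-action on $\so(\gx)$ is precisely the blow-up of the $R_G$-action on $\so(G)$, namely $\bigl((x,(u,v),y),(y,K)\bigr)\mapsto\bigl(x,(u,v)\cdot K\bigr)$.

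With that structure in place, the equivalence of the two continuity statements follows the template of Lemmas~\ref{lem-proper-blow}, \ref{lem-cartan-blow}, and \ref{lem-cts-iso}. If the $R_G$-action is continuous, then given a convergent net in the domain of the $R_{\gx}$-action I would push it forward along the continuous maps $f\times f$ and $\set{x}\times K\times\set{x}\mapsto K$, apply continuity of the $R_G$-action, and pull the result back --- no subnets needed. Conversely, if the $R_{\gx}$-action is continuous and $\bigl((u_i,v_i),K_i\bigr)\to\bigl((u,v),K\bigr)$ in the domain of the $R_G$-action, I would use openness of $f$ twice (exactly as in Lemma~\ref{lem-proper-blow}) to pass to a subnet and lift to $x_i\to x$ and $y_i\to y$ in $X$ with $f(x_i)=u_i$ and $f(y_i)=v_i$; then $\bigl((x_i,y_i),(y_i,K_i)\bigr)$ converges in the domain of the $R_{\gx}$-action, continuity there gives convergence of the images, and projecting back gives $\gamma_i K_i\gamma_i^{-1}\to\gamma K\gamma^{-1}$. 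Since a net converges as soon as every subnet admits a further subnet converging to the candidate limit, this yields continuity of the $R_G$-action, and feeding the blow-up statement back through Remark~\ref{rem-std} finishes the proof.

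The step I expect to be the main obstacle is the middle one: checking carefully that $\so(\gx)$ really is the fibred product $X\times_f\so(G)$ as a \emph{topological} space (both directions of the Fell criterion), and that the base-point map $\so(G)\to\go$ is continuous so that the fibred product and its identification with $\so(\gx)$ are legitimate. Everything after that is a routine reprise of blow-up arguments already used repeatedly above.
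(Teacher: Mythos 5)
Your proposal is correct and follows essentially the same route as the paper: the reduction to the blow-up via Remark~\ref{rem-std}, the splitting into abelian isotropy, Hausdorff orbit space (Corollary~\ref{cor-tone-orbit-space}), and continuity of the $R$-action, and then the identification of $\so'$ with the fibred product $X\times_f\so$ is exactly what the paper encodes in the maps $\rho$, $p_{0}'$ and Lemma~\ref{lem-conv}, with the same push-forward/openness-lifting net arguments in Proposition~\ref{prop-key}. The step you flag as the main obstacle is the one the paper also isolates (and dispatches via \cite{wil:toolkit}*{Lemma~3.22}), so your assessment of where the work lies is accurate.
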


Since Corollary~\ref{cor-tone-orbit-space} implies that
$G\backslash \go$ is Hausdorff if and only if $H\backslash \ho$
is, to prove the theorem it will suffice to prove the following lemma
and appeal to Remark~\ref{rem-std} as usual.

\begin{prop}\label{prop-key}
  Suppose that $f:X\to\go$ is a
  continuous, open surjection.   Let $R=\pi(G)$ as above, and let
  $R'=\pi'(\gx)$ where $\pi':\gx\to X\times X$ is the corresponding
  map for $\gx$.   Then $R$ acts continuously on $\so$ if and only if $R'$
  acts continuously on $\so'$ where $\so'$ is the space of closed
  subgroups of $\gx$.
\end{prop}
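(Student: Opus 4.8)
The plan is to make explicit how the closed‑subgroup space and the groupoid $R$ transform under the blow‑up, and then transfer continuity of the action by a net argument. For a closed subgroup $K$ of any groupoid write $u_K$ for its unique unit.

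\emph{Identifying $\so'$.} A closed subgroup $S$ of $\gx$ is contained in a single isotropy group $\gx(x)$, and by \eqref{eq:4a} the homomorphism $\cphi$ restricts to a homeomorphism of $\gx(x)$ onto $G(f(x))$; thus $S=\set{(x,\gamma,x):\gamma\in\cphi(S)}$, where $x=x_S$ is determined by $S$ and $\cphi(S)$ is a closed subgroup of $G$ with $u_{\cphi(S)}=f(x_S)$. (In particular $\gx$ has abelian isotropy, so the $R'$-action on $\so'$ is defined.) I claim $S\mapsto\bigl(x_S,\cphi(S)\bigr)$ is a homeomorphism of $\so'$ onto the fibre product $X*_f\so:=\set{(x,K)\in X\times\so:f(x)=u_K}$, carrying the subspace topology from $X\times\so$. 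Both directions are verified against the Fell criterion \cite{wil:toolkit}*{Lemma~3.22}: if $S_i\to S$ in $\so'$, applying the criterion to the unit of $S$ forces $x_{S_i}\to x_S$, and then the criterion transfers between $(S_i)$ and $(\cphi(S_i))$ using continuity of $\cphi$ and of $(x,\gamma,x)\mapsto x$; conversely, if $(x_i,K_i)\to(x,K)$ in $X*_f\so$, then, knowing $x_i\to x$, Fell convergence $K_i\to K$ in $\mathcal C(G)$ upgrades to $\set{(x_i,\gamma,x_i):\gamma\in K_i}\to\set{(x,\gamma,x):\gamma\in K}$ in $\mathcal C(\gx)$.

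\emph{Identifying $R'$ and matching the actions.} By \eqref{eq:1a} we have $(x,y)\in R'$ exactly when $(f(x),f(y))\in R$, so $R'=(f\times f)^{-1}(R)$ with its subspace topology, and under the identification above the $R'$-action on $\so'$ becomes
\[
(x,y)\cdot(y,K)=\bigl(x,(f(x),f(y))\cdot K\bigr).
\]
Indeed, choosing any $\eta\in G$ with $r(\eta)=f(x)$ and $s(\eta)=f(y)$, we get $(x,\eta,y)(y,\gamma,y)(x,\eta,y)^{-1}=(x,\eta\gamma\eta^{-1},x)$ and $\eta K\eta^{-1}=(f(x),f(y))\cdot K$ by definition of the $R$-action. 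Thus the $R'$-action is the pullback of the $R$-action along $f\times f\colon R'\to R$ and the projection $X*_f\so\to\so$. Now the equivalence of the two continuity statements is a net chase. If $R$ acts continuously on $\so$, apply $f$ to all $X$-coordinates of a convergent net in $R'*\so'$ to obtain a convergent net in $R*\so$, use continuity there, and reassemble with the untouched first $X$-coordinate to get continuity of the $R'$-action. Conversely, suppose $R'$ acts continuously and let $((u_i,v_i),K_i)\to((u,v),K)$ in $R*\so$; it suffices to show every subnet has a further subnet along which $(u_i,v_i)\cdot K_i\to(u,v)\cdot K$. Fix $x,y\in X$ with $f(x)=u$ and $f(y)=v$ (using surjectivity of $f$); since $f$ is open, Fell's Criterion \cite{wil:toolkit}*{Proposition~1.1} lets us pass to a subnet with $x_i\to x$, $y_i\to y$, $f(x_i)=u_i$, $f(y_i)=v_i$. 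Then $(x_i,y_i)\to(x,y)$ in $R'$ and $(y_i,K_i)\to(y,K)$ in $\so'$, so continuity of the $R'$-action together with the identification above gives $(u_i,v_i)\cdot K_i\to(u,v)\cdot K$ along that subnet.

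The main obstacle is identifying $\so'$: verifying that the tautological bijection $\so'\cong X*_f\so$ is a homeomorphism for the Fell topologies. Everything after that is bookkeeping, with the converse relying on the familiar device that a net converges once every subnet has a convergent further subnet, together with Fell's Criterion to lift $u_i\to u$ and $v_i\to v$ through the open surjection $f$.
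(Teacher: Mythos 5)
Your proof is correct and takes essentially the same route as the paper's: your homeomorphism $\so'\cong X*_{f}\so$ is exactly the paper's pair $(p_{0}',\rho)$ together with its Lemma on convergence in $\so'$, your intertwining formula for the actions is the paper's equation \eqref{eq:7a}, and both directions of the net chase (pushing down via $\cphi$ and $f$, lifting via openness of $f$ after passing to subnets) match the paper's argument.
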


Before proceeding with the proof of the lemma, we introduce some
notation and make a few observations.  Let $p_{0}:\so\to\go$ be given
by $p_{0}(H)=u$ when $H\subset G(u)$.  Let $p_{0}':\so'\to X$
be the corresponding map for $\gx$.   Using \eqref{eq:4a}, there is a
map $\rho:\so'\to\so$ such that if $p_{0}'(H')=x$, then
\begin{equation}
  \label{eq:3}
  H'=\set{(x,\gamma,x):\gamma\in \rho(H')}.
\end{equation}
Note that $p_{0}(\rho(H'))=f(p_{0}'(H'))$.  Using
\cite{wil:toolkit}*{Lemma~3.22} it is not hard to establish the
following.

\begin{lemma}
  \label{lem-conv} A net $H_{n}'\to H'$ in $\so'$ if and only if
  $\rho(H_{n}')\to \rho(H')$ in $\so$ and $p_{0}'(H_{n}')\to
  p_{0}'(H')$. 
\end{lemma}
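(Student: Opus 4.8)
The plan is to reduce everything to the net version of Fell-convergence for closed sets recorded in \cite{wil:toolkit}*{Lemma~3.22}, which (after the customary passage to a subnet) characterizes $C_{n}\to C$ by the two conditions: \textup{(a)} whenever a subnet $\gamma_{n_{i}}\in C_{n_{i}}$ converges, its limit lies in $C$; and \textup{(b)} every $\gamma\in C$ is a limit of some such subnet $\gamma_{n_{i}}\in C_{n_{i}}$. The structural input is \eqref{eq:3}: a closed subgroup $H'\in\so'$ is contained in the single isotropy group $\gx(x)$ with $x=p_{0}'(H')$, its elements are precisely the triples $(x,\gamma,x)$ with $\gamma\in\rho(H')$, and $\rho(H')$ is a closed subgroup of $G(f(x))$. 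Thus all elements of $H_{n}'$ have first and third coordinate equal to $x_{n}:=p_{0}'(H_{n}')$, and a triple $(x_{n},\gamma_{n},x_{n})$ belongs to $\gx$ exactly when $\gamma_{n}\in G(f(x_{n}))$; I will use these facts throughout, together with continuity of the coordinate projections $\gx\to X$ and $\gx\to G$ and the fact that $\gx$ carries the relative product topology.

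For the forward implication, suppose $H_{n}'\to H'$ in $\so'$. First I would show $x_{n}\to x:=p_{0}'(H')$: applying \textup{(b)} to the unit $(x,f(x),x)\in H'$ yields, along a subnet, elements $(x_{n},\gamma_{n},x_{n})\in H_{n}'$ converging to it, and reading off first coordinates gives $x_{n}\to x$ along that subnet; the standard ``subnet of every subnet'' argument promotes this to convergence of the whole net. (Equivalently this is just continuity of $p_{0}'$.) Granting $x_{n}\to x$, I verify \textup{(a)} and \textup{(b)} for $\rho(H_{n}')\to\rho(H')$ in $\so$: if $\gamma_{n_{i}}\in\rho(H_{n_{i}}')$ with $\gamma_{n_{i}}\to\gamma$, then $(x_{n_{i}},\gamma_{n_{i}},x_{n_{i}})\in H_{n_{i}}'$ converges in $\gx$ to $(x,\gamma,x)$ (this triple lies in $\gx$ because $f(x_{n_{i}})=r(\gamma_{n_{i}})$ passes to the limit), so \textup{(a)} for $H_{n}'\to H'$ gives $(x,\gamma,x)\in H'$, i.e. $\gamma\in\rho(H')$; conversely, for $\gamma\in\rho(H')$, applying \textup{(b)} for $H_{n}'\to H'$ to $(x,\gamma,x)$ and reading off middle coordinates produces the required subnet of $\gamma_{n}\in\rho(H_{n}')$ with $\gamma_{n}\to\gamma$.

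For the converse, assume $\rho(H_{n}')\to\rho(H')$ in $\so$ and $x_{n}\to x$, and check \textup{(a)} and \textup{(b)} for $H_{n}'\to H'$. For \textup{(a)}: a convergent subnet of elements $(x_{n_{i}},\gamma_{n_{i}},x_{n_{i}})\in H_{n_{i}}'$ has limit of the form $(x,\gamma,x)$ (the outer coordinates are forced by $x_{n}\to x$), with $\gamma_{n_{i}}\to\gamma$ and $\gamma_{n_{i}}\in\rho(H_{n_{i}}')$, so \textup{(a)} for $\rho(H_{n}')\to\rho(H')$ gives $\gamma\in\rho(H')$ and the limit lies in $H'$. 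For \textup{(b)}: given $(x,\gamma,x)\in H'$, so $\gamma\in\rho(H')$, \textup{(b)} for $\rho(H_{n}')\to\rho(H')$ produces a subnet $\gamma_{n}\in\rho(H_{n}')$ with $\gamma_{n}\to\gamma$; then $(x_{n},\gamma_{n},x_{n})$ is a genuine element of $H_{n}'$ (since $\gamma_{n}\in G(f(x_{n}))$) and converges to $(x,\gamma,x)$.

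I do not anticipate a real obstacle: the argument is bookkeeping with the two coordinate projections and the product topology. The only points needing a little care are the upgrade from subnet convergence to convergence of the full net for $p_{0}'$ (the standard argument, or a direct appeal to continuity of $p_{0}'$, which itself follows from \cite{wil:toolkit}*{Lemma~3.22}), and verifying at each step that a triple $(x_{n},\gamma_{n},x_{n})$ one writes down actually lies in $\gx$ — which is automatic since by construction $\gamma_{n}\in\rho(H_{n}')\subseteq G(f(x_{n}))$.
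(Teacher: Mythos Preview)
Your proposal is correct and follows precisely the route the paper indicates: the paper does not give a proof but simply remarks that the lemma is ``not hard to establish'' using \cite{wil:toolkit}*{Lemma~3.22}, and you have carried out exactly that verification via the (a)/(b) Fell-convergence criteria together with the coordinate description \eqref{eq:3}.
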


\begin{lemma}
  \label{lem-rho-phi-prop} The maps $\rho:\so'\to\so$ and $\cphi:\gx\to
  G$ are continuous, open surjections.
\end{lemma}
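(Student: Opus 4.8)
The plan is to treat the two maps separately: $\cphi$ directly from the definitions, and $\rho$ by combining Lemma~\ref{lem-conv} with Fell's Criterion \cite{wil:toolkit}*{Proposition~1.1}.

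For $\cphi\colon\gx\to G$: continuity is immediate since $\cphi$ is the restriction to $\gx$ of the coordinate projection $X\times G\times X\to G$. Surjectivity is also immediate: given $\gamma\in G$, use that $f$ is onto to pick $x\in f^{-1}(r(\gamma))$ and $y\in f^{-1}(s(\gamma))$, so that $(x,\gamma,y)\in\gx$ and $\cphi(x,\gamma,y)=\gamma$. For openness it is enough to compute the image of a basic open set; a basic open subset of $\gx$ has the form $W=(U\times O\times V)\cap\gx$ with $U,V$ open in $X$ and $O$ open in $G$, and a direct check gives
\[
  \cphi(W)=O\cap r^{-1}\bigl(f(U)\bigr)\cap s^{-1}\bigl(f(V)\bigr),
\]
which is open because $f$ is open and $r,s$ are continuous. (Equivalently, $\gx$ may be identified with the pullback of $f\times f\colon X\times X\to\go\times\go$ along $\pi\colon G\to\go\times\go$, with $\cphi$ the induced map, and pullbacks of open surjections are open surjections.)

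For $\rho\colon\so'\to\so$: continuity is the forward implication of Lemma~\ref{lem-conv}, and surjectivity is easy, since given $K\in\so$ we may set $u=p_{0}(K)$, pick $x\in f^{-1}(u)$, and let $H'=\set{(x,\gamma,x):\gamma\in K}$; by \eqref{eq:4a} this is a closed subgroup of $\gx(x)$, so $H'\in\so'$ with $\rho(H')=K$. For openness I would use Fell's Criterion. Fix $H'\in\so'$, put $x=p_{0}'(H')$ and $K=\rho(H')$, so $f(x)=p_{0}(K)$, and let $K_{n}\to K$ in $\so$. The key first step is that, after passing to a subnet and relabelling, we may assume $p_{0}(K_{n})\to f(x)$: the point $f(x)=p_{0}(K)$ is the unit of the group $K$, hence lies in $K$, so Fell convergence $K_{n}\to K$ yields a subnet and elements $\gamma_{n}\in K_{n}$ with $\gamma_{n}\to f(x)$, and then $p_{0}(K_{n})=r(\gamma_{n})\to f(x)$. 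Since $f$ is open, pass to a further subnet and relabel so that there are $x_{n}\in X$ with $f(x_{n})=p_{0}(K_{n})$ and $x_{n}\to x$. Set $H_{n}'=\set{(x_{n},\gamma,x_{n}):\gamma\in K_{n}}$; because $K_{n}\subset G(f(x_{n}))$, this is a closed subgroup of $\gx(x_{n})$, hence $H_{n}'\in\so'$ with $p_{0}'(H_{n}')=x_{n}$ and $\rho(H_{n}')=K_{n}$. Then $\rho(H_{n}')=K_{n}\to K=\rho(H')$ and $p_{0}'(H_{n}')=x_{n}\to x=p_{0}'(H')$, so $H_{n}'\to H'$ in $\so'$ by Lemma~\ref{lem-conv}; this is exactly what Fell's Criterion demands.

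The step I expect to be the main obstacle is precisely that opening move in the openness of $\rho$: one has to pass to a subnet forcing the base points $p_{0}(K_{n})$ to converge to $f(x)$ before the openness of $f$ can be invoked, and the justification rests on the elementary but crucial fact that $p_{0}(K)$ is itself an element of $K$. The remaining verifications---continuity and surjectivity of both maps, and openness of $\cphi$---are routine.
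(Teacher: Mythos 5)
Your proof is correct and follows essentially the same route as the paper: openness of $\rho$ via Fell's Criterion, lifting the base points through the open map $f$ after passing to a subnet, and concluding with Lemma~\ref{lem-conv}, with the claims about $\cphi$ handled by direct computation. In fact your opening move in the openness argument---observing that $p_{0}(K)\in K$ so that Fell convergence of $K_{n}\to K$ forces $p_{0}(K_{n})\to f(x)$ along a subnet---supplies a justification that the paper's own proof elides (it simply writes $u_{n}=p_{0}(H_{n})=f(p_{0}'(H'))$), so your version is the more complete one.
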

\begin{proof}
  To see that $\rho$ is open, we use Fell's criterion.  Suppose that
  $H_{n}\to \rho(H')$.  Let $u_{n}=p_{0}(H_{n})=f(p_{0}'(H'))$.  Since
  $f$ is open, we can pass to a subnet, relabel, and assume that there are
  $x_{n}\to p_{0}'(H')$ such that $f(x_{n})=u_{n}$.  Let
  $H_{n}'=\set{(x_{n},\gamma,x_{n}):\gamma\in H_{n}}$.  Now we can use
  Lemma~\ref{lem-conv} to show that $H_{n}'\to H'$.  Since
  $\rho(H_{n}')=H_{n}$ by construction, this proves openness.

  The proof of continuity for $\rho$ is similar, but easier, as are
  the assertions about~$\cphi$.
\end{proof}

Observe that if $H'\in\so'$ and $\alpha\in\gx$ is such that
$s(\alpha)=p_{0}'(H')$, then
\begin{equation}
  \label{eq:7a}
  \alpha\cdot H'=H''
\end{equation}
where $\rho(H'')=\cphi(\alpha)\cdot \rho(H')$.

\begin{proof}[Proof of Proposition~\ref{prop-key}]
  Suppose that $R$ acts continuously on $\so$.  We want to show that
  $R'$ acts continuously on $\so'$.  To this end, suppose that
  $H_{n}'\to H'_{0}$ in $\so'$ with $x_{n}=p_{0}'(H_{n}')$.  Suppose
  we also have
  $\pi'(\alpha_{n})=(y_{n},x_{n})\to \pi'(\alpha_{0})=(y_{0},x_{0})$.
  We want to see that
  $\pi'(\alpha_{n})\cdot H_{n}'\to \pi'(\alpha_{0})\cdot H_{0}'$.  Let
  $H_{n}=\rho(H_{n}')$.  Then the continuity of $\rho$ implies that
  $H_{n}\to H_{0}$.  Moreover,
  $u_{n}=f(x_{n})=p_{0}(H_{n})\to u_{0}=f(x_{0})=p_{0}(H_{0})$.  Hence
  $\pi(\cphi(\alpha_{n}))\cdot H_{n}\to \pi(\cphi(\alpha_{0}))\cdot
  H_{0}$.  Since
  $\rho(\pi'(\alpha_{n})\cdot H_{n}')=\pi(\cphi(\alpha_{n}))\cdot
  \rho(H_{n}')$, Lemma~\ref{lem-conv} implies that
  $\pi'(\alpha_{n})\cdot H_{n}'\to \pi'(\alpha_{0})\cdot H_{0}'$ as
  required.

  Conversely, assume that $R'$ acts continuously  on $\so'$.  To show
  that $R$ then acts continuously on $\so$, we assume $H_{n}\to H_{0}$
  with $p_{0}(H_{n})=u_{n}$.   We also suppose
  $\pi(\gamma_{n})=(v_{n},u_{n})\to (v_{0},u_{0})$.  We need to
  establish that $\pi(\gamma_{n})\cdot H_{n}\to \pi(\gamma_{0})\cdot
  H_{0}$.   For this, it suffices to see that any subnet has a subnet
  converging to $\pi(\gamma_{0})\cdot H_{0}$.   Since $f$ is open, we
  can pass to a subnet, relabel, and assume that there are $x_{n}\to
  x_{0}$ in $X$ such that $f(x_{n})=u_{n}$.   Passing to still another
  subnet, we can also assume that there are $y_{n}\to y_{0}$ in $X$
  such that $f(y_{n})=v_{n}$.  Then
  $\alpha_{n}=(y_{n},\gamma_{n},x_{n})\in\gx$ and $\alpha_{n}\to
  \alpha_{0}$.  Let $H_{n}'=\set{(x_{n},\gamma,x_{n}):\gamma\in
    H_{n}}$.  Then $H_{n}'\to H_{0}'$ with $p_{0}'(H_{n}')=x_{n}$.
  By assumption, $\pi'(\alpha_{n})\cdot H_{n}'\to
  \pi'(\alpha_{0})\cdot H_{0}'$.  Since $\cphi(\alpha_{n})=\gamma_{n}$
  by construction, the continuity of $\rho$ implies
  that $\pi(\gamma_{n})\cdot H_{n}\to \pi(\gamma_{0})\cdot H_{0}$.
  This completes the proof.
\end{proof}

\section{Haar Systems and Integrable Groupoids}
\label{sec:integrable-groupoids}

In this section, we will be primarily working with second countable
groupoids with Haar systems.  The following is the main result in
\cite{wil:pams16}.

\begin{thm}[\cite{wil:pams16}*{Theorem~2.1}]
  \label{thm-haar} Suppose that $G$ and $H$ are equivalent second
  countable groupoids.  Then $G$ has a Haar system if and only if $H$
  has a Haar system.
\end{thm}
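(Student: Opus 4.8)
The plan is to run the Standard Technique of Remark~\ref{rem-std}: carrying a Haar system is preserved by isomorphism of topological groupoids, so by Theorem~\ref{thm-main-tool} it suffices to prove that, for a continuous open surjection $f\colon X\to\go$, the groupoid $G$ has a Haar system if and only if the blow-up $\gx$ does. The one external input is the classical fact that an \emph{open} continuous surjection $f\colon X\to\go$ of second countable locally compact Hausdorff spaces carries a continuous family of measures: there are Radon measures $\set{\alpha_{u}}_{u\in\go}$ with $\operatorname{supp}\alpha_{u}=f^{-1}(u)$ and $u\mapsto\int_{X}\phi\,d\alpha_{u}$ continuous for each $\phi\in\cc(X)$. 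This is precisely where second countability is needed; it can fail for open surjections of general locally compact Hausdorff spaces.

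For the forward implication, given a Haar system $\set{\lambda^{u}}$ on $G$ and a family $\set{\alpha_{u}}$ as above, I would put, for $g\in\cc(\gx)$ and $x\in X$,
\begin{equation*}
  \beta^{x}(g)=\int_{G^{f(x)}}\Bigl(\int_{f^{-1}(s(\gamma))}g(x,\gamma,y)\,d\alpha_{s(\gamma)}(y)\Bigr)\,d\lambda^{f(x)}(\gamma),
\end{equation*}
and check the three requirements for a Haar system on $\gx$. That $\operatorname{supp}\beta^{x}$ is the full range fibre over $x$ follows from full support of $\lambda^{f(x)}$ and the $\alpha_{v}$. Continuity of $x\mapsto\beta^{x}(g)$ follows in two steps: the inner integral is jointly continuous in $(x,\gamma)$ because $\set{\alpha_{v}}$ is a continuous family and $g$ has compact support, and then the outer integral depends continuously on $x$ because $\set{\lambda^{u}}$ is a Haar system and $f$ is continuous. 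Left invariance reduces, for a blow-up arrow from $x$ to $x'$ of the form $(x',\eta,x)$, to the substitution $\gamma\mapsto\eta\gamma$ in the outer integral (this is left invariance of $\set{\lambda^{u}}$), while the inner integral is unchanged since $s(\eta\gamma)=s(\gamma)$. So $\set{\beta^{x}}$ is a Haar system on $\gx$.

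For the converse --- which I expect to be the harder half --- I would exploit the homomorphism $\cphi\colon\gx\to G$, $\cphi(x,\gamma,y)=\gamma$, which is a continuous open surjection by Lemma~\ref{lem-rho-phi-prop}, and the fact that $\gx$ is an extension of $G$ by the closed subgroupoid $\mathcal N=\cphi^{-1}(\go)=\set{(x,f(x),y):f(x)=f(y)}$, the groupoid of the equivalence relation ``$f(x)=f(y)$''. Since $f$ is open, $\mathcal N$ carries its own Haar system $\set{\alpha_{f(x)}}$ (again the classical fact above, hence again second countability). One then shows that a Haar system $\set{\beta^{x}}$ on $\gx$ descends through $\cphi$ to a Haar system $\set{\lambda^{u}}$ on $G=\gx/\mathcal N$: for $u\in\go$ and any $x\in f^{-1}(u)$, $\lambda^{u}$ is obtained by disintegrating $\beta^{x}$ along $\cphi\colon\gx^{x}\to G^{u}$, whose fibre over $\gamma$ is $\set{x}\times\set{\gamma}\times f^{-1}(s(\gamma))$. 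Left invariance of $\set{\beta^{x}}$ forces the outcome to be independent of the choice of $x$ in $f^{-1}(u)$ (because $\beta^{x'}$ is the evident transport of $\beta^{x}$ whenever $f(x)=f(x')$), left invariant (push the identity $\beta^{r(\eta)}=\eta\cdot\beta^{s(\eta)}$ forward through $\cphi$), of full support $G^{u}$ (a closed set, since $r$ is continuous, onto which $\cphi$ carries $\operatorname{supp}\beta^{x}=\gx^{x}$), and continuous in $u$ (lift $u_{i}\to u$ to $x_{i}\to x$ using openness of $f$).

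The main obstacle is this last disintegration, namely the groupoid analogue of Weil's theorem that the quotient of a group with Haar measure by a closed normal subgroup with Haar measure again has a Haar measure. The subtlety is that the naive pushforward $\cphi_{*}\beta^{x}$ need not be locally finite (already when $G=\go$ is trivial and $\gx$ is the pair groupoid bundle over non-compact fibres of $f$, it can be an infinite multiple of a point mass), so one must normalize the fibre measures honestly, using the Haar system $\set{\alpha_{f(x)}}$ of $\mathcal N$ as a reference, and then check that this normalization assembles measurably and continuously in $u$; this is the step where second countability does its second piece of work. If one prefers to avoid blow-ups, the same two facts can be deployed on the linking groupoid of a $(G,H)$-equivalence, where $G$ and $H$ sit as the reductions to the two clopen halves of the unit space: there the descent direction is trivial (restrict the Haar system), and the work moves entirely into extending a Haar system across the linking groupoid via a continuous family of measures along an open moment map.
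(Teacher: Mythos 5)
The paper does not actually prove this theorem; it is quoted verbatim from \cite{wil:pams16}*{Theorem~2.1}, so there is no internal argument to compare against. Your reduction to blow-ups via Theorem~\ref{thm-main-tool} is legitimate, and your forward half is correct and standard: given a Haar system $\{\lambda^{u}\}$ on $G$ and a full continuous family $\{\alpha_{u}\}$ along the open surjection $f$ (this is where second countability enters), the iterated-integral formula you write down is a Haar system on $\gx$, and your checks of support, continuity, and left invariance are the right ones.

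The gap is in the descent $\gx\Rightarrow G$, which you correctly identify as the crux but do not close. A Haar system $\{\beta^{x}\}$ on $\gx$ is \emph{left} invariant, and left translation by $(x',\eta,x)$ acts on the first coordinate, whereas the fibres of $\cphi$ inside $\gx^{x}$, namely $\{x\}\times\{\gamma\}\times f^{-1}(s(\gamma))$, are \emph{right} cosets of the kernel $\mathcal{N}$. Left invariance therefore gives no control over how $\beta^{x}$ disintegrates over $G^{f(x)}$: there is no a priori reason the fibre measures should be mutually absolutely continuous with respect to the $\alpha_{s(\gamma)}$, let alone with densities varying continuously enough to ``normalize honestly.'' Your own example ($G$ a point, $\gx$ a pair groupoid with an infinite fibre measure) shows the naive pushforward is $\infty\cdot\delta$, and the proposed repair is precisely a groupoid analogue of Weil's quotient theorem, which is not available off the shelf and is at least as hard as the statement being proved. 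The proof in \cite{wil:pams16} sidesteps the descent entirely by working with the equivalence $Z$ directly: since $s_{Z}^{-1}(v)$ is a single $G$-orbit, $w\mapsto\tau_{G}(z,w)$ is a homeomorphism of $s_{Z}^{-1}(s_{Z}(z))$ onto $G^{r(z)}$, so a continuous family of measures along the open map $s_{Z}$ transports to locally finite measures on the range fibres of $G$ with no normalization needed; the Haar system on $H$ is then used only to average away the dependence on the choice of $z$ within its $H$-orbit. Your closing remark about the linking groupoid points in this direction, but there too the substantive step --- producing the $G$-invariant family from the $H$-invariant one --- is asserted rather than proved.
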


If $(G,\lambda)$ is a groupoid with a  Haar system
$\lambda=\set{\lambda_{u}}_{u\in\go}$, then 
building on \cite{rie:em04} and \cite{hue:iumj02}, in
\cite{clahue:pams08}*{Definition~3.1}, Clark and an Huef
define a groupoid $G$ to be \emph{integrable} if for
every compact set $N\subset \go$,
\begin{equation}
  \label{eq:10}
  \sup_{u\in N} \lambda^{u}(s^{-1}(N))<\infty.
\end{equation}

As stated, integrability does not appear to be a purely topological
property since \emph{a priori}, it depends on a choice of Haar system.
However, if $G$ is a second countable principal groupoid, then Clark
and an Huef prove that $G$ is integrable if and only if
$\cs(G,\lambda)$ has bounded trace \cite{clahue:pams08}*{Theorem~4.4}.
It follows from the Equivalence Theorem that the Morita equivalence
class of $\cs(G)$ is invariant under any choice of a Haar system
\cite{wil:toolkit}*{Proposition~2.74}.  Since the property of having
bounded trace is preserved by Morita equivalence
\cite{hrw:pams07}*{Proposition~7}, it follows that a principal
groupoid $G$ is integrable if and only if \eqref{eq:10} holds for
some, and hence any, Haar system on $G$.  Furthermore, if $G$ is
equivalent to $H$ and if $H$ admits a Haar system, then so does $G$
by Theorem~\ref{thm-haar}.  In particular, we have the following.

\begin{prop}
  \label{prop-bounded-trace} Suppose that $G$ is a second countable
  principal groupoid admitting a Haar system, and that $H$ is a second
  countable groupoid equivalent to $G$.  If $G$ is integrable, then
  $H$ admits a Haar system and $H$ is integrable.
\end{prop}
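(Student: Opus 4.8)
The plan is to run the chain of implications foreshadowed in the discussion preceding the statement. First I would observe that since $H$ is equivalent to the principal groupoid $G$, $H$ is itself principal: this is the remark following Proposition~\ref{prop-iso-iso} that equivalence preserves principal groupoids. Next, since $G$ admits a Haar system and $G$ and $H$ are equivalent second countable groupoids, Theorem~\ref{thm-haar} produces a Haar system $\mu=\set{\mu_{v}}_{v\in\ho}$ on $H$. Thus $H$ is a second countable principal groupoid with a Haar system, and it remains only to verify \eqref{eq:10} for $(H,\mu)$.

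For that I would pass through bounded trace. Since $(G,\lambda)$ is second countable, principal, and integrable, \cite{clahue:pams08}*{Theorem~4.4} says that $\cs(G,\lambda)$ has bounded trace. The Equivalence Theorem \cite{mrw:jot87} applies to the equivalent second countable groupoids with Haar systems $(G,\lambda)$ and $(H,\mu)$, so $\cs(G,\lambda)$ and $\cs(H,\mu)$ are Morita equivalent; since bounded trace is preserved by Morita equivalence \cite{hrw:pams07}*{Proposition~7}, $\cs(H,\mu)$ has bounded trace. Applying \cite{clahue:pams08}*{Theorem~4.4} once more, now to the second countable principal groupoid $(H,\mu)$, we conclude that $(H,\mu)$ is integrable, that is, \eqref{eq:10} holds for $\mu$. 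As noted in the discussion above, integrability of a second countable principal groupoid does not depend on the choice of Haar system, so $H$ is integrable.

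I do not anticipate a genuine obstacle, since every ingredient is an already-cited result; the only point needing care is the bookkeeping of hypotheses: one must check that the principality of $G$ forces that of $H$ (so that \cite{clahue:pams08}*{Theorem~4.4} is available for $H$), and that the Haar system $\mu$ coming from Theorem~\ref{thm-haar} is the same one used both in the Equivalence Theorem and in the final application of \cite{clahue:pams08}*{Theorem~4.4}, so that the assertion ``$H$ is integrable'' does not quietly depend on an unstated choice. If one prefers to sidestep the Haar-system-independence remark altogether, one can simply record the conclusion as ``$(H,\mu)$ is integrable for the Haar system $\mu$ of Theorem~\ref{thm-haar},'' which is exactly what the statement asks for.
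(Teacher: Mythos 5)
Your proposal is correct and follows exactly the route the paper takes in the discussion preceding the proposition: $H$ is principal and has a Haar system by Proposition~\ref{prop-iso-iso} and Theorem~\ref{thm-haar}, and integrability transfers via the Clark--an Huef characterization in terms of bounded trace together with the Equivalence Theorem and the Morita-invariance of bounded trace. Your extra care about which Haar system on $H$ is being used is a sensible bookkeeping point that the paper handles the same way, by noting that integrability of a principal groupoid is independent of the choice of Haar system.
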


\begin{remark}
  It would be interesting to determine if the property of being
  integrable is independent of the choice of a Haar system in the
  general case.   Even more tempting, it would be nice to find a
  purely topological criterion that does not depend on the existence
  of a Haar system.   One possibility is suggested by
  \cite{clahue:pams08}*{Definition~3.6} and
  \cite{clahue:pams08}*{Proposition~3.11}.   Unfortunately, the
  converse of  \cite{clahue:pams08}*{Proposition~3.11} is only known
  in the principal case.
\end{remark}

\section{Twists}
\label{sec:t-groupoids}

The notion of a \emph{twist} or a \emph{$\T$-groupoid} originated in
\cite{kum:cjm86} and has been extensively studied
\cites{muhwil:ms92,muhwil:jams04,muhwil:plms395}.  Recall that a twist
$E$ over $G$ is given by a central groupoid extension 
\begin{equation}
  \label{eq:50}
  \begin{tikzcd}
    \go\times \T\arrow[r,"\iota"] &E \arrow[r,"j",two heads]& G
  \end{tikzcd}
\end{equation}
where $\iota$ and $j$ are continuous
groupoid homomorphisms such that $\iota$ is
a homeomorphism onto its range, $j$ is an open surjection inducing a
homeomorphism of the unit space of $E$ with $\go$, and with kernel
equal to the range of $\iota$.  We identify the unit space of $E$ with $\go$. 
Furthermore
\begin{equation}
  \label{eq:51}
  \iota(r(e),t)e=e\iota(s(e),t)\quad\text{for all $e\in E$ and $t\in \T$.}
\end{equation}
Note that $E$ becomes a
principal $\T$-bundle with respect to the action $t\cdot
e=\iota(r(e),t)e$. 

Conversely, we can think of a twist as a groupoid $E$ admitting a
free left $\T$-action that is compatible with the groupoid structure
as in \cite{muhwil:plms395}*{\S3}.   Specifically, we have the
following observation.
\begin{lemma}
  \label{lem-muhwil} Suppose $\T$ acts freely on a groupoid $E$ such
  that $r(t\cdot e)=r(e)$, $s(t\cdot e)=s(e)$, and
  $(t\cdot e)(t'\cdot f)=(tt')\cdot ef$.  Let $G$ be the orbit space
  $\T\backslash E$, and let $j:E\to G$ the orbit map.  Then $G$ is a
  locally compact groupoid with respect to the operations
  $j(e)j(f)=j(ef)$ with $(e,f)\in E^{(2)}$, and $j(e)^{-1}=j(e^{-1})$.
  Furthermore $E$ is twist over $G$ with $\iota(u,t)=t\cdot u$.
\end{lemma}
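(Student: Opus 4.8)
The plan is to defer the algebraic bookkeeping and the twist axioms to \cite{muhwil:plms395}*{\S3}, where this correspondence is essentially worked out, and to concentrate on the topological points needed so that $G$ meets our standing assumptions: that $G$ is locally compact Hausdorff with open range and source maps. The one structural fact doing most of the topological work is that $\T$ is \emph{compact}: hence the free $\T$-action on $E$ is automatically proper, the orbit map $j$ is an open, closed, and proper surjection, and $G=\T\backslash E$ is locally compact Hausdorff.

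First I would check that the operations descend. If $(e,f)\in E^{(2)}$ and $t,t'\in\T$, then $s(t\cdot e)=s(e)=r(f)=r(t'\cdot f)$, so $(t\cdot e,t'\cdot f)\in E^{(2)}$ with $(t\cdot e)(t'\cdot f)=(tt')\cdot ef$; thus $j(ef)$ depends only on $j(e)$ and $j(f)$. Likewise $(t\cdot e)^{-1}=t^{-1}\cdot e^{-1}$, since both sides have range $r(e)$ and source $s(e)$ and $(t\cdot e)(t^{-1}\cdot e^{-1})=1\cdot r(e)=r(e)$ is a unit; so inversion descends. Freeness then forces $j$ to behave like a strict homomorphism: if $u,v\in E^{(0)}$ satisfy $v=t\cdot u$, then $v=r(v)=r(t\cdot u)=r(u)=u$ and $t=1$, so $j(s(e))=j(r(f))$ already implies $s(e)=r(f)$, i.e. composability in $G$ matches composability in $E$. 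The groupoid axioms for $G$ now follow by applying $j$ to the corresponding identities in $E$. The same freeness argument shows $j\restr{E^{(0)}}$ is injective, and since $E^{(0)}$ is closed in $E$ and $j$ is closed, $j\restr{E^{(0)}}$ is a homeomorphism of $E^{(0)}$ onto $G^{(0)}=j(E^{(0)})$, which we use to identify $E^{(0)}$ with $\go$. Continuity of multiplication and inversion on $G$ follows because $(e,f)\mapsto(j(e),j(f))$ is an open surjection of $E^{(2)}$ onto $G^{(2)}$ and $j$ intertwines the operations, and openness of the range map $r_G$ follows from $r_G\circ j=j\restr{E^{(0)}}\circ r_E$: the right-hand side is open because $r_E$ is open by hypothesis and $j\restr{E^{(0)}}$ is a homeomorphism, and since $j$ is an open surjection this forces $r_G$ open; similarly for $s_G$.

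Finally I would verify the twist axioms for $\iota(u,t)=t\cdot u$ (reading $u\in\go$ as the corresponding unit of $E$). The relation $(t\cdot u)(t'\cdot u)=(tt')\cdot u$ shows $\iota$ is a continuous groupoid homomorphism; it is injective by freeness (compare ranges as above); its range is $\set{e\in E:j(e)\in G^{(0)}}=\T\cdot E^{(0)}$, and it is a homeomorphism onto that range because its inverse sends $e$ to $r(e)$ together with the unique $t\in\T$ with $t\cdot r(e)=e$, which is continuous since the free and proper $\T$-action on $E$ has a continuous translation map. The centrality condition \eqref{eq:51} is immediate: $(t\cdot r(e))\,e=(t\cdot 1)\cdot(r(e)e)=t\cdot e=(1\cdot t)\cdot(e\,s(e))=e\,(t\cdot s(e))$. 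The main obstacle is not any single step but the accumulation of these topological verifications---above all that $j$ restricts to a homeomorphism on unit spaces and that $G$ inherits open range and source maps---which is precisely the sharpening over \cite{muhwil:plms395} needed to keep us inside the class of groupoids considered in this paper.
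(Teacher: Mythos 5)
Your argument is correct and takes essentially the same route as the paper's proof, which only sketches these verifications (compactness of $\T$ gives local compactness of $G=\T\backslash E$ and openness of $r_G$ and $s_G$ via $r_G\circ j=j\restr{E^{(0)}}\circ r_E$, with the groupoid and twist axioms left as routine); you have simply filled in the routine details. The one quibble is that $(t\cdot e)^{-1}$ has range $s(e)$ and source $r(e)$ rather than the other way around, but your actual computation $(t\cdot e)(t^{-1}\cdot e^{-1})=r(e)$ correctly establishes the identity you need.
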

\begin{proof}[Sketch of the Proof]
  Since $\T$ is compact, $G$ is locally compact Hausdorff.   It is
  routine to verify that $G$ is a groupoid with $\go$ identified with
  $E^{(0)}$ via $u\mapsto j(u)$, and that \eqref{eq:50} is exact.
  Since $\T$ is compact, it
  follows that $r_{G}(j(e))=r_{E}(e)$ is open as is $s_{G}$.
\end{proof}

If $E$ is a twist acting on the left of a space $Z$, then we get a
$\T$-action on the left of $Z$ by $t\cdot z=\iota(r(z),t)\cdot z$.
Similarly, if $Z$ is a right $E$-space, we get a right $\T$-action on
$Z$ by $z\cdot t=z\cdot \iota(s(z),t)$.  In particular, if $Z$ is an
equivalence between two twists, then it is both a left $\T$-space and
a right $\T$-space.

\begin{definition}[\cite{muhwil:plms395}*{Definition~3.1}]
  Suppose that $E$ and $E'$ are twists over $G$ and $G'$,
  respectively.  We say that a $(E,E')$-equivalence $Z$ is an
  \emph{$(E,E')$-twist equivalence over $(G,G')$} if $t\cdot z=z\cdot t$ for all
  $t\in\T$ and $z\in Z$.
\end{definition}

\begin{remark}
  \label{rem-mw-name} In \cite{muhwil:plms395} a $(E,E')$-twist
  e\-qui\-va\-lence was called a $(E,E')$-$\T$-e\-qui\-va\-lence.  In
  \cite{muhwil:plms395}*{Theorem~3.2}, it is shown that if $E$ and
  $E'$ are second countable twists such that $G$ and $G'$ have Haar
  systems, then the restricted groupoid \cs-algebras $\cs(G;E)$ and
  $\cs(G';E')$ are Morita equivalent.
\end{remark}

Let $Z$ be a $(E,E')$-twist equivalence over $(G,G')$.   Let $\ZT$ be
the locally compact Hausdorff quotient $\T\backslash Z=Z/\T$.  If
$z\in Z$, we let $[z]=z\cdot\T=\T\cdot z$.

Suppose $e,f\in E$ and $z,w\in Z$ are such that
$j(e)=j(f)$ and $[z]=[w]$.  Then there are $t,t'\in\T$ such
that $f=t\cdot e$ and $w=t'\cdot z$.  Then
\begin{equation}
  \label{eq:8a}
  [f\cdot w]=[(t\cdot e)\cdot (t'\cdot z)]=[t\cdot (e\cdot z) \cdot
  t']=[e\cdot z].
\end{equation}
With a similar observation when $j'(e')=j'(f')$ in $G'$, we have a
left $G$-action on $\ZT$ and a right action of $G'$ on $\ZT$ given
by
\begin{equation}
  \label{eq:7}
  j(e)\cdot [z]=[e\cdot z]\quad\text{and}\quad [z]\cdot j'(e')=[z\cdot
  e'],
\end{equation}
respectively.  Note that if $r_{Z}$ is the moment map for the left
$E$-action on $Z$, then the moment map for the left $G$-action on
$\ZT$ is given by $r_{\ZT}([z])=r_{Z}(z)$.   Usually, we will abuse
notation and simply write $r$ for these maps and $s$ for the
corresponding moment maps for the right actions.

\begin{prop}
  \label{prop-ggp-equi} Let $Z$ be a $(E,E')$-twist equivalence over
  $(G,G')$, and let $\ZT$ be the quotient $(G,G')$-space as above.
  Then $\ZT$ is a $(G,G')$-equivalence.  In particular, $G$ and $G'$
  are equivalent groupoids.
\end{prop}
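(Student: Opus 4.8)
The plan is to verify that $\ZT=Z/\T$, equipped with the $G$- and $G'$-actions of \eqref{eq:7} and the moment maps $r_{\ZT}([z])=r_Z(z)$, $s_{\ZT}([z])=s_Z(z)$, satisfies the five conditions of Definition~\ref{def-equivalence}. Since $G$ and $G'$ have open range and source maps by Lemma~\ref{lem-muhwil}, the remark following Definition~\ref{def-equivalence} lets us replace (E4) and (E5) by the requirement that $r_{\ZT}$ and $s_{\ZT}$ be open and induce bijections onto $\go$ and $(G')^{(0)}$. The space $\ZT$ is locally compact Hausdorff because $\T$ is compact, and the orbit map $q\colon Z\to\ZT$ is a continuous, open, and proper surjection, again because $\T$ is compact. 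Throughout, what is being exploited is the corresponding package of facts for the $(E,E')$-equivalence $Z$, namely (E1)--(E5) for $Z$, with the unit spaces of $E$ and $E'$ identified with $\go$ and $(G')^{(0)}$.

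First I would dispose of the moment maps and of (E3). By construction $r_{\ZT}\circ q=r_Z$ and $s_{\ZT}\circ q=s_Z$; since $q$ is an open surjection and $r_Z,s_Z$ are open, so are $r_{\ZT}$ and $s_{\ZT}$. If $r_Z(z)=r_Z(w)$ then, because $r_Z$ induces a bijection $Z/E'\to\go$, we have $w=z\cdot e'$ for some $e'\in E'$, hence $[w]=[z]\cdot j'(e')$ and $[z],[w]$ lie in the same $G'$-orbit; conversely $r_Z$ is constant on $G'$-orbits, so $r_{\ZT}$ induces a bijection $\ZT/G'\to\go$, and symmetrically $s_{\ZT}$ induces a bijection $G\backslash\ZT\to(G')^{(0)}$. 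That the $G$- and $G'$-actions commute is immediate from the fact that the $E$- and $E'$-actions on $Z$ commute: $j(e)\cdot\bigl([z]\cdot j'(e')\bigr)=[e\cdot z\cdot e']=\bigl(j(e)\cdot[z]\bigr)\cdot j'(e')$.

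Next comes freeness, which is where the twist identity \eqref{eq:51} enters. If $j(e)\cdot[z]=[z]$ then $e\cdot z=\iota(r_Z(z),t)\cdot z$ for some $t\in\T$; both actions are defined and the relevant moment maps automatically agree, so freeness of the $E$-action on $Z$ forces $e=\iota(r_Z(z),t)$, whence $j(e)=j(\iota(r_Z(z),t))=r_Z(z)=r_{\ZT}([z])$ is the required unit. The right $G'$-action is handled the same way. For properness of the left $G$-action, consider the continuous surjection $Q\colon E*Z\to G*\ZT$, $Q(e,z)=(j(e),[z])$, which fits into the commuting square $(q\times q)\circ\Theta_Z=\Theta_{\ZT}\circ Q$, where $\Theta_Z\colon E*Z\to Z\times Z$ and $\Theta_{\ZT}\colon G*\ZT\to\ZT\times\ZT$ are the action maps. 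Given a compact $K\subseteq\ZT\times\ZT$, its preimage $\widetilde K\subseteq Z\times Z$ under $q\times q$ is compact since $q\times q$ is proper, and then $\Theta_Z^{-1}(\widetilde K)$ is compact because $Z$ is a proper $E$-space. A diagram chase gives $\Theta_{\ZT}^{-1}(K)=Q\bigl(\Theta_Z^{-1}(\widetilde K)\bigr)$, so $\Theta_{\ZT}^{-1}(K)$ is compact. Properness of the right $G'$-action is symmetric, which completes the verification of (E1)--(E5); the final sentence of the proposition then follows.

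I expect the properness step to be the main obstacle --- not because it is deep, but because it requires setting up the comparison with $\Theta_Z$ correctly and relying on the standard (but easy to fumble) fact that the orbit map for an action of the compact group $\T$ on a locally compact Hausdorff space is proper, so that $q\times q$ carries compact sets back to compact sets. Everything else is bookkeeping with the definitions and with properties already recorded for $Z$.
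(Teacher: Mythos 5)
Your verification is correct in substance, and it takes a genuinely different route from the paper's proof. The paper works with nets throughout: it first proves joint continuity of the induced actions by lifting convergent nets through the open orbit maps, then gets properness from the net criterion of \cite{wil:toolkit}*{Proposition~2.17}, and checks (E4)--(E5) by hand. You instead exploit compactness of $\T$ at the level of maps: since the orbit map $q\colon Z\to\ZT$ is proper, $(q\times q)^{-1}(K)$ is compact for compact $K$, and the identity $\Theta_{\ZT}^{-1}(K)=Q\bigl(\Theta_Z^{-1}\bigl((q\times q)^{-1}(K)\bigr)\bigr)$ (which does hold, because every $(\gamma,[z])\in G*\ZT$ lifts to some $(e,z)\in E*Z$) transfers properness of the $E$-action directly; and you dispatch (E4)--(E5) by invoking the remark following Definition~\ref{def-equivalence}, which is legitimate here since $G$ and $G'$ have open range and source maps (standing assumption, and automatic by Lemma~\ref{lem-muhwil}), so only openness of the moment maps and the induced bijections need checking. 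Your freeness argument via the kernel of $j$ is also fine and is more explicit than the paper's ``clearly free''. What your approach buys is a cleaner, map-theoretic properness argument with no subnet bookkeeping, at the modest price of verifying the displayed set identity and the properness of $q\times q$.

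One step you pass over that the paper treats first: you never verify that the formulas \eqref{eq:7} define \emph{jointly continuous} actions of $G$ and $G'$ on $\ZT$, which (E1) and (E2) presuppose. This is not a fatal flaw, and your own apparatus closes it: $Q=(j\times q)\restr{E*Z}\colon E*Z\to G*\ZT$ is a continuous open surjection (it is the restriction of the open map $j\times q$ to the full preimage of $G*\ZT$), hence a quotient map, and the continuous map $(e,z)\mapsto[e\cdot z]$ is constant on its fibers, so it descends to a continuous action map; similarly for the right $G'$-action. Add a sentence to this effect; the paper accomplishes the corresponding step with a subnet argument using openness of the orbit maps.
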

\begin{proof}
  To see that the left $G$-action is continuous, suppose that
  $j(e_{i})\to j(e)$ in $G$ and that $[z_{i}]\to [z]$ in $\ZT$ with
  $s(e_{i})=r(z_{i})$.  It will suffice to see that every subnet of
  $([e_{i}\cdot z_{i}])$ has a subnet converging to $[e\cdot
  z]$. Since the orbit maps are open, after
  passing to a subnet, we can pass to a further subnet, relabel, and
  assume that $e_{i}\to e$ while $z_{i}\to z$.  But then
  $e_{i}\cdot z_{i}\to e\cdot z$.  Therefore the left $G$-action is
  continuous.  The proof for the right $G'$-action is similar.

  To see that $\ZT$ is a $(G,G')$-equivalence, we verify (E1)--(E5)
  Definition~\ref{def-equivalence}.
  To see that the left $G$-action is proper, suppose that $[z_{i}]\to
  [z]$ and that $j(e_{i})\cdot [z_{i}]\to [w]$.  After passing to a
  subnet and relabeling, we can assume that $z_{i}\to z$ while
  $e_{i}\cdot z_{i}\to w$.   Then $(e_{i})$ must have a convergent
  subnet which implies that $(j(e_{i}))$ does as well.   This suffices
  by \cite{wil:toolkit}*{Proposition~2.17}.   Since the $G$-actions is
  clearly free, $\ZT$ is a free and proper left $G$-space.   A similar
  argument shows that it is a free and proper right $G'$-space.  This
  establishes (E1) and (E2).

  Furthermore, these actions commute: $\bigl( j(e)\cdot [z]\bigr)\cdot
  j'(e') =[e\cdot z]\cdot j'(e')=[(e\cdot z)\cdot e']= [e\cdot (z\cdot
  e')] = j(e) \cdot \bigl([z]\cdot j'(e')\bigr)$. So (E3) holds.

  To see that the moment map for the $G$-action is open, suppose that
  $u_{i}\to r(e)=r(j(e))$.    Since the moment map for the $E$-action
  is open, we can pass to a subnet, relabel, and assume that $e_{i}\to
  e$ with $r(e_{i})=u_{i}$.   But then $j(e_{i})\to j(e)$.  Similarly,
  the moment map for the $G'$ action is open.

  Moreover if $r([z])=r([z'])$, then $r(z)=r(z')$ and there is a $e'$
  such that $z=z'\cdot e'$.  But then $[z]=[z']\cdot j'(e')$ and $r$
  factors through a homeomorphism of $\ZT/G'$ onto $\go$.    This
  establishes (E4), and (E5) is proved similarly. 
\end{proof}

Our next result is essentially a reworking of
\cite{muhwil:plms395}*{Theorem~3.5}.  We sketch the details here.

\begin{thm}
  \label{thm-muhwil-3.5} Suppose that
\begin{equation}
  \label{eq:9b}
  \begin{tikzcd}
    (G')^{(0)}\times \T\arrow[r,"i'"]& E'\arrow[r,"j'",two heads]&G'.
  \end{tikzcd}
\end{equation}
is a twist over $G'$.  If $E$ is equivalent to $E'$ and if $Z$ is an
$(E,E')$ equivalence, then
$E$ admits a principal left $\T$-action so
that $E$ is a twist over a groupoid $G$ with unit space $Z/E'$.
Furthermore, with respect to this twist structure on $E$, $Z$ is a
$(E,E')$-twist equivalence over $(G,G')$. 
\end{thm}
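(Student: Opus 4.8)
The plan is to transport the circle structure of $E'$ to $E$ along the equivalence $Z$, rigging the construction so that the twist‑equivalence identity $t\cdot z=z\cdot t$ holds by fiat, and then to read off the twist axioms for $E$ from those for $E'$ together with the equivalence properties of $Z$. This is the argument of \cite{muhwil:plms395}*{Theorem~3.5} adapted to the present setting, so I will only sketch it.

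First some preliminaries. Since $E'$ is a twist over $G'$, the right $E'$-space $Z$ carries a right $\T$-action $z\cdot t=z\cdot\iota'(s_Z(z),t)$; it is free because $\iota'$ is injective with $\iota'(v,1)=v$, and automatically proper because $\T$ is compact. It commutes with the left $E$-action (the $E$- and $E'$-actions commute), and because $\iota'(v,t)$ lies in the isotropy of $E'$ at $v$ we have $s_Z(z\cdot t)=s_Z(z)$; since $s_Z$ induces a homeomorphism $E\backslash Z\to (E')^{(0)}$, the points $z$ and $z\cdot t$ always lie in the same $E$-orbit. As $Z$ is a free and proper, hence Cartan, left $E$-space, its translation map $\tau_E$ is continuous, so $\tau_E(z\cdot t,z)$ is defined for every $z\in Z$ and $t\in\T$.

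Now the construction. For $u\in E^{(0)}$ and $t\in\T$, pick $z\in Z$ with $r_Z(z)=u$ (possible since $r_Z$ is onto) and set $\iota(u,t):=\tau_E(z\cdot t,z)$, i.e.\ the unique element of $E$ with $\iota(u,t)\cdot z=z\cdot t$. One first checks this is independent of $z$: if $r_Z(w)=r_Z(z)$ then, since $r_Z$ induces a homeomorphism $Z/E'\to E^{(0)}$, we have $w=z\cdot e'$ with $e'\in E'$, and a short manipulation using \eqref{eq:51} for $E'$ and freeness of the right $E'$-action gives $\tau_E(w\cdot t,w)\cdot z=z\cdot t$. One then has $\iota(u,t)\in E(u)$ with $\iota(u,1)=u$, and continuity of $(u,t)\mapsto\iota(u,t)$ follows because $(z,t)\mapsto\tau_E(z\cdot t,z)$ is continuous and factors through the open surjection (hence quotient map) $r_Z\times\operatorname{id}_{\T}$. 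Setting $t\cdot e:=\iota(r_E(e),t)\,e$ (product in $E$), I would verify the hypotheses of Lemma~\ref{lem-muhwil}: $1\cdot e=e$ and $r(t\cdot e)=r(e)$, $s(t\cdot e)=s(e)$ are immediate; the identities $\iota(u,ts)=\iota(u,t)\iota(u,s)$ and $\iota(r(e),t)\,e=e\,\iota(s(e),t)$ are each proved by applying both sides to a suitable $z$ and using that the $E$- and $E'$-actions on $Z$ commute, and together they yield $(ts)\cdot e=t\cdot(s\cdot e)$ and $(t\cdot e)(t'\cdot f)=(tt')\cdot(ef)$; and the action is free, since $t\cdot u=\iota(u,t)=u$ forces $z\cdot t=z$, hence $t=1$ (and it is principal since $\T$ is compact). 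By Lemma~\ref{lem-muhwil}, $G:=\T\backslash E$ is a locally compact groupoid, $E$ is a twist over $G$ with structure map $\iota$, and $G^{(0)}$ is identified with $E^{(0)}$, which the equivalence $Z$ identifies with $Z/E'$. This gives the first assertion.

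For the second assertion, $Z$ is an $(E,E')$-equivalence by hypothesis, so it only remains to check the compatibility in \cite{muhwil:plms395}*{Definition~3.1}, namely $t\cdot z=z\cdot t$ for $z\in Z$ and $t\in\T$. But the left $\T$-action on $Z$ determined by the new twist structure on $E$ is $t\cdot z=\iota(r_Z(z),t)\cdot z$, which is precisely $z\cdot t$ by the defining property of $\iota$. Hence $Z$ is an $(E,E')$-twist equivalence over $(G,G')$. I expect the one genuinely delicate point to be the construction of $\iota$ itself: showing it is well defined independently of $z$, continuous, and both multiplicative and covariant; this is where openness of $r_Z$, the central extension identity \eqref{eq:51} for $E'$, freeness, and commutativity of the two actions on $Z$ all get used, while the remainder is the same bookkeeping as in \cite{muhwil:plms395}*{Theorem~3.5}.
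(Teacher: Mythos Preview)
Your argument is correct and is the same construction as the paper's, just carried out in different coordinates. The paper first replaces $E$ by the linking groupoid $(Z*_{s}Z)/E'$ via \cite{wil:toolkit}*{Lemma~2.42 and Remark~2.43} and then defines the $\T$-action explicitly as $t\cdot[z,w]=[z\cdot t,w]$; under the canonical isomorphism $E\cong (Z*_{s}Z)/E'$ that sends $\tau_{E}(z,w)$ to $[z,w]$, the paper's $\iota(z\cdot E',t)=[z\cdot t,z]$ is exactly your $\iota(u,t)=\tau_{E}(z\cdot t,z)$. Your version has the virtue of working intrinsically with $E$ without passing to a model, at the cost of having to verify well-definedness, continuity, multiplicativity, and centrality of $\iota$ by hand (which you outline correctly, and the verifications go through as you say); the paper's version trades those checks for the identification of $E$ with the linking groupoid, after which the compatibility $(t\cdot e)(t'\cdot f)=(tt')\cdot ef$ becomes the short computation~\eqref{eq:12}.
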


\begin{proof}
Using \cite{wil:toolkit}*{Lemma~2.42 and Remark~2.43} we can
assume that $E=(Z*_{s}Z)/E'$.  Recall that
the groupoid structure on $E$ is given
as follows.   We have  $[z,w]^{-1}=[w,z]$.  Also
$[z,w]$ and $[x,y]$ are composible
if $w\cdot E'=x\cdot E'$.   Then there is a unique $e'\in E'$ such
that $x=w\cdot e'$ and $[z,w][x,y]=[z\cdot e',y]$.  Then we can
identify the unit space of $E$ with $Z/E'$ and then
$r([z,w])=z\cdot E'$
and $s([z,w])=w\cdot E'$.   It is common practice to summarize the
composition law as simply 
$[z,w][w,p]=[z,p]$ with the gyrations
\begin{equation}
  \label{eq:13}
  [z,w][x,y]=[z,w][w\cdot e',y ][z,w][w,y \cdot
(e')^{-1}]=[z,y\cdot (e')^{-1}]=[z\cdot e',y].
\end{equation}
left implicit.

With this identification of $E$, the moment map $r$ on $Z$ is
given by $r(z)=x\cdot E'$.  The left action of $E$ on $Z$ is given by
$[z,w]\cdot w=z$ (with a similar understanding as with the
simplified composition law).

Since $\T$ is abelian, we can define a left
$\T$-action on $Z*_{s}Z$ using the right $\T$-action on $Z$ induced by
$E'$: $t\cdot (z,w)=(z\cdot t,w)$.  If $e'\in E'$ with $r(e')=s(z)$,
then
\begin{align}
  \label{eq:4}
  ((z\cdot t)\cdot e',w\cdot e')
  &= (z\cdot \iota'(s(z),t)e',w\cdot e') 
  = (z\cdot (e'\iota'(s(e'),t),w\cdot e') \\
  &= ((z\cdot e')\cdot t,w\cdot e').
\end{align}
It follows that we get a left action of $\T$ on $E$ given by
\begin{equation}
  \label{eq:8}
  t\cdot [z,w]=[z\cdot t,w].
\end{equation}
If $[z\cdot t,w]=[z,w]$, then $t=1$ since the $E'$-action on $Z$ is
free.   Hence $E$ becomes a principal $\T$-space.  Furthermore,
\begin{equation}
  \label{eq:11}
  [z\cdot t,w]=[z\cdot \iota'(s(z),t),w]=[z,w\cdot
  \iota'(s(w),\overline t)]=[z,w\cdot \overline t].
\end{equation}
Then
\begin{align}
  \label{eq:12}
(t\cdot [z,w])(s\cdot [w,p]) 
&= [z\cdot t,w][w,p\cdot \overline s] 
=[z\cdot t,p\cdot \overline s]=[z\cdot (ts),p]\\
&=(ts)\cdot
\bigl([z,w][w,p]\bigr). 
\end{align}
Thus the $\T$-action on $E$ is compatible with the groupoid structure
and $E$ is a twist over the quotient $G=\T\backslash E$ as above.  It
particular, $\iota:Z/E'\times \T\to E$ is given by $\iota(z\cdot
E',t)=t\cdot [z,z]=[z\cdot t,z]$
Therefore if $t\in \T$ and $z\in Z$, we have
\begin{align}
  \label{eq:14}
  t\cdot z
  &=\iota(z\cdot E',t)\cdot z = [z\cdot t,z]\cdot z
    = z\cdot t.
\end{align}
That is $Z$ is a $(E,E')$-twist equivalence.
\end{proof}

Note that in Theorem~\ref{thm-muhwil-3.5}, the groupoids $G$ and $G'$
are equivalent by Proposition~\ref{prop-ggp-equi}.   Hence we can
apply our previous results in examples such as the following.
\begin{example}
  Suppose that $E$ is a twist over an essentially principal groupoid
  $G$.   If $E'$ is equivalent to $E$, then $E'$ is also a twist over an
  essentially principal groupoid $G'$.
\end{example}
Of course, we could replace ``essentially principal'' above with any
property preserved by equivalence.

%%%%%%%%%%%%%%%%%%%%%%%%%%%%%%%%%%%%%%%%%%
%%%%% End Matter

\appendix

\section{\'Etale Groupoids and Equivalence}
\label{sec:etale}

We have already observed in Example~\ref{ex-not-etale} that the
property of a groupoid being \'etale 
need not extend to arbitrary push-outs, let alone equivalent
groupoids.  To obtain some interesting specific examples, we can
appeal to 
\cite{rie:pspm82}*{Situation~4}. 

\begin{example}
  \label{ex-sit-8}
Let $H$ be a closed subgroup of $G$ and let $P$ be any right $H$-space.  Let
$Z=G\times P$.   Then $(g,p)\cdot h=(gh,P\cdot h)$ is a free and
proper $H$-space (because $H$ acts freely and properly on the right of
$G$).   Similarly, $g'\cdot (g,p)=(g'g,p)$ is a free and proper left
$G$-action.    Then Green's Symmetric Imprimitivity Theorem
(\cite{wil:crossed}*{Corollary~4.11}) implies that
\begin{equation}
  \label{eq:9}
  C_{0}(G\backslash Z)\rtimes_{\rt}H\quad\text{and}\quad C_{0}(Z/H)\rtimes_{\lt}G
\end{equation} 
are Morita equivalent.  In fact, in the second countable case, we can
also prove this using the observation that the action groupoids
$G\rtimes Z/H$ for the left $G$-action and
  $G\backslash Z \rtimes H$ for the right $H$-action are equivalent
(\cite{wil:toolkit}*{Example~2.34}), and then applying the
Equivalence Theorem \cite{wil:toolkit}*{Theorem~2.70}.

In particular, if $H$ is a discrete subgroup of $G$, then
$G\backslash Z \rtimes H$ is \'etale.   But if $G$ is not
discrete, then $G\rtimes Z/H$ is not.\footnote{This is because
  $(G\rtimes Z/H)^{(0)} = \sset 
  e\times Z/H$ is not open in $G\times Z/H$.}

For a specific example, we can let $G=\R$ and $H=\Z$.  Then we let $H$
act on $P=\T$ by an irrational rotation.  Since $G\backslash M$ is can
be identified with $P$, $C_{0}(G\backslash Z)\rtimes_{\rt}H$
$C_0(G\backslash Z)$ is an irrational rotation algebra.  But $Z/H$
can be identified with the torus $\T^{2}$ and the $\R$-action is the
flow along an irrational angle.
\end{example}

Example~\ref{ex-not-etale} shows that the push-out of an \'etale
groupoid can fail to be \'etale.   It is also the case that the
push-out of a non-\'etale groupoid is \emph{never} \'etale.

\begin{lemma}
  \label{lem-gx-converse} Suppose that $f:X\to \go$ is a continuous
  open surjection and that $G$ is not \'etale.   Then $\gx$ is not
  \'etale.   
\end{lemma}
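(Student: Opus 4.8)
The plan is to prove the contrapositive: if $\gx$ is étale then $G$ is étale. Recall that, under our standing assumptions, a locally compact Hausdorff groupoid is étale exactly when its unit space is open (this is the criterion already used in Example~\ref{ex-not-etale}), so the task reduces to showing that $\gx^{(0)}$ fails to be open in $\gx$ whenever $\go$ fails to be open in $G$, mirroring the construction in Example~\ref{ex-not-etale}.

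So suppose $\go$ is not open in $G$. Then there is $u\in\go$ and a net $(\gamma_i)\subset G\setminus\go$ with $\gamma_i\to u$ (nets rather than sequences, since no second countability is assumed). By continuity of $r$ and $s$ we get $r(\gamma_i)\to u$ and $s(\gamma_i)\to u$. Fix any $x\in f^{-1}(u)$. Since $f$ is open, Fell's Criterion lets us pass to a subnet, relabel, and find $x_i\to x$ in $X$ with $f(x_i)=r(\gamma_i)$; applying Fell's Criterion once more to $s(\gamma_i)\to u=f(x)$ we pass to a further subnet, relabel, and find $y_i\to x$ with $f(y_i)=s(\gamma_i)$. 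Then $(x_i,\gamma_i,y_i)\in\gx$ and $(x_i,\gamma_i,y_i)\to (x,u,x)=(x,f(x),x)\in\gx^{(0)}$. On the other hand each $\gamma_i$ is not a unit, so $\gamma_i\ne r(\gamma_i)=f(x_i)$; hence $(x_i,\gamma_i,y_i)$ is not of the form $(z,f(z),z)$ and so lies in $\gx\setminus\gx^{(0)}$. Therefore $\gx^{(0)}$ is not open in $\gx$ and $\gx$ is not étale.

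An even shorter route is to note that the projection $\cphi\colon\gx\to G$, $\cphi(x,\gamma,y)=\gamma$, is a continuous open surjection (the openness is again Fell's Criterion together with the openness of $f$, as in the proof of Lemma~\ref{lem-rho-phi-prop}) and that $\cphi(\gx^{(0)})=f(X)=\go$; so if $\gx^{(0)}$ were open in $\gx$ then $\go$ would be open in $G$. I do not anticipate a real obstacle here: the only points needing care are arranging, via Fell's Criterion, that both $x_i$ and $y_i$ converge to the \emph{same} point $x\in f^{-1}(u)$ — which is possible precisely because $r(\gamma_i)$ and $s(\gamma_i)$ have the common limit $u=f(x)$ — and verifying that the net $(x_i,\gamma_i,y_i)$ genuinely avoids the unit space, which is exactly where the hypothesis $\gamma_i\notin\go$ enters.
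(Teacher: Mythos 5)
Your argument is correct and is essentially the paper's own proof: both use openness of $r$ to conclude that $G$ not \'etale means $\go$ is not open in $G$, and then lift a net in $G\setminus\go$ converging to a unit $u=f(x)$, via openness of $f$ and Fell's criterion (passing to successive subnets so that both coordinates converge to the same $x$), to a net in $\gx\setminus\gx^{(0)}$ converging to $(x,u,x)\in\gx^{(0)}$. Your alternative remark that openness of $\cphi$ together with $\cphi(\gx^{(0)})=\go$ gives the conclusion immediately is a slightly slicker variant of the same idea, but the main argument coincides with the paper's.
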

\begin{proof}
  Since we are assuming that $r:G\to\go$ is open, $G$ not \'etale
  implies that $\go$ is not open in $G$
  \cite{wil:toolkit}*{Lemma~1.26}.   Thus there is a sequence
  $(\gamma_{n}) \subset G\setminus \go$ such that $\gamma_{n }\to
  u\in\go$.
  Let $f(x)=u$.  Since $r(\gamma_{n})\to u$, we can pass to a subnet,
  relabel, and assume that there are $x_{n}\in X$ with $x_{n}\to x$
  and $f(x_{n})=r(\gamma_{n})$.   Similarly, after passing to a subnet
  and relabeling, there are $y_{n}\in G$
  such that $y_{n}\to x$ and $f(y_{n})=s(\gamma_{n})$.   Then
  $(x_{n},\gamma_{n},y_{n})\to (x,u,x)$ in $\gx$.  Since
  $(x_{n},\gamma_{n},y_{n})\notin \gx^{(0)}$, it follows that
  $\gx^{(0)}$ is not open in $\gx$.   Hence $\gx$ is not \'etale.
\end{proof}

\def\noopsort#1{}\def\cprime{$'$} \def\sp{^}
% \bib, bibdiv, biblist are defined by the amsrefs package.
\begin{bibdiv}
\begin{biblist}

\bib{hue:iumj02}{article}{
      author={an~Huef, Astrid},
       title={Integrable actions and transformation groups whose
  {$C^\ast$}-algebras have bounded trace},
        date={2002},
        ISSN={0022-2518},
     journal={Indiana Univ. Math. J.},
      volume={51},
      number={5},
       pages={1197\ndash 1233},
         url={https://doi.org/10.1512/iumj.2002.51.2168},
      review={\MR{1947873}},
}

\bib{arzren:oaqft97}{incollection}{
      author={Arzumanian, Victor},
      author={Renault, Jean~N.},
       title={Examples of pseudogroups and their {$C^*$}-algebras},
        date={1997},
   booktitle={Operator algebras and quantum field theory ({R}ome, 1996)},
   publisher={Int. Press, Cambridge, MA},
       pages={93\ndash 104},
      review={\MR{1491110 (99a:46101)}},
}

\bib{clahue:pams08}{article}{
      author={Clark, Lisa~Orloff},
      author={an~Huef, Astrid},
       title={Principal groupoid {$C\sp *$}-algebras with bounded trace},
        date={2008},
        ISSN={0002-9939},
     journal={Proc. Amer. Math. Soc.},
      volume={136},
      number={2},
       pages={623\ndash 634 (electronic)},
      review={\MR{MR2358504}},
}

\bib{cla:iumj07}{article}{
      author={Clark, Lisa~Orloff},
       title={C{CR} and {GCR} groupoid {$C\sp *$}-algebras},
        date={2007},
        ISSN={0022-2518},
     journal={Indiana Univ. Math. J.},
      volume={56},
      number={5},
       pages={2087\ndash 2110},
      review={\MR{MR2359724}},
}

\bib{echeme:em11}{article}{
      author={Echterhoff, Siegfried},
      author={Emerson, Heath},
       title={Structure and {$K$}-theory for crossed products by proper
  actions},
        date={2011},
     journal={Expo. Math.},
      volume={29},
       pages={300\ndash 344},
}

\bib{hrw:pams07}{article}{
      author={Huef, Astrid~an},
      author={Raeburn, Iain},
      author={Williams, Dana~P.},
       title={Properties preserved under {M}orita equivalence of {$C\sp
  *$}-algebras},
        date={2007},
        ISSN={0002-9939},
     journal={Proc. Amer. Math. Soc.},
      volume={135},
      number={5},
       pages={1495\ndash 1503},
      review={\MR{MR2276659}},
}

\bib{ikrsw:jfa20}{article}{
      author={Ionescu, Marius},
      author={Kumjian, Alex},
      author={Renault, Jean~N.},
      author={Sims, Aidan},
      author={Williams, Dana~P.},
       title={{$C^*$}-algebras of extensions of groupoids by group bundles},
        date={2021},
        ISSN={0022-1236},
     journal={J. Funct. Anal.},
      volume={280},
      number={5},
       pages={in press},
         url={https://doi.org/10.1016/j.jfa.2020.108892},
      review={\MR{4189010}},
}

\bib{ikrsw:nzjm21}{article}{
      author={Ionescu, Marius},
      author={Kumjian, Alex},
      author={Renault, Jean~N.},
      author={Sims, Aidan},
      author={Williams, Dana~P.},
       title={Pushouts of extensions of groupoids by bundles of abelian
  groups},
        date={2021},
     journal={New Zealand J. Math.},
      volume={52},
       pages={561\ndash 581},
        note={(arXiv:2107.05776)},
}

\bib{iksw:jot19}{article}{
      author={Ionescu, Marius},
      author={Kumjian, Alex},
      author={Sims, Aidan},
      author={Williams, Dana~P.},
       title={The {D}ixmier-{D}ouady classes of certain groupoid
  {$C^\ast$}-algebras with continuous trace},
        date={2019},
        ISSN={0379-4024},
     journal={J. Operator Theory},
      volume={81},
      number={2},
       pages={407\ndash 431},
         url={https://doi.org/10.7900/jot},
      review={\MR{3959064}},
}

\bib{kum:cjm86}{article}{
      author={Kumjian, Alexander},
       title={On {\cs}-diagonals},
        date={1986},
     journal={Canad. J. Math.},
      volume={38},
       pages={969\ndash 1008},
}

\bib{mrw:jot87}{article}{
      author={Muhly, Paul~S.},
      author={Renault, Jean~N.},
      author={Williams, Dana~P.},
       title={Equivalence and isomorphism for groupoid {$C^*$}-algebras},
        date={1987},
        ISSN={0379-4024},
     journal={J. Operator Theory},
      volume={17},
      number={1},
       pages={3\ndash 22},
      review={\MR{88h:46123}},
}

\bib{muhwil:jams04}{article}{
      author={Muhly, Paul~S.},
      author={Williams, Dana~P.},
       title={The {D}ixmier-{D}ouady class of groupoid crossed products},
        date={2004},
        ISSN={1446-7887},
     journal={J. Aust. Math. Soc.},
      volume={76},
      number={2},
       pages={223\ndash 234},
      review={\MR{MR2041246 (2005e:46128)}},
}

\bib{muhwil:ms92}{article}{
      author={Muhly, Paul~S.},
      author={Williams, Dana~P.},
       title={Continuous trace groupoid {\cs}-algebras. {II}},
        date={1992},
     journal={Math. Scand.},
      volume={70},
       pages={127\ndash 145},
}

\bib{muhwil:plms395}{article}{
      author={Muhly, Paul~S.},
      author={Williams, Dana~P.},
       title={Groupoid cohomology and the {D}ixmier-{D}ouady class},
        date={1995},
     journal={Proc. London Math. Soc. (3)},
       pages={109\ndash 134},
}

\bib{neu:phd11}{thesis}{
      author={Neumann, Katharina},
       title={A description of the {J}acobson topology on the spectrum of a
  transformation group {$\cs$}-algebras by proper actions},
        type={Ph.D. Thesis},
organization={Westf\"alische Wilhelms-Universit\"at M\"unster},
        date={2011},
}

\bib{pal:aom61}{article}{
      author={Palais, Richard~S.},
       title={On the existence of slices for actions of non-compact {L}ie
  groups},
        date={1961},
     journal={Ann. of Math.},
      volume={73},
       pages={295\ndash 323},
}

\bib{ren:irms08}{article}{
      author={Renault, Jean~N.},
       title={Cartan subalgebras in {$C^*$}-algebras},
        date={2008},
        ISSN={0791-5578},
     journal={Irish Math. Soc. Bull.},
      number={61},
       pages={29\ndash 63},
      review={\MR{2460017 (2009k:46135)}},
}

\bib{ren:jot91}{article}{
      author={Renault, Jean~N.},
       title={The ideal structure of groupoid crossed product \cs-algebras},
        date={1991},
     journal={J. Operator Theory},
      volume={25},
       pages={3\ndash 36},
}

\bib{rie:em04}{article}{
      author={Rieffel, Marc~A.},
       title={Integrable and proper actions on {$C\sp *$}-algebras, and
  square-integrable representations of groups},
        date={2004},
        ISSN={0723-0869},
     journal={Expo. Math.},
      volume={22},
      number={1},
       pages={1\ndash 53},
      review={\MR{MR2166968}},
}

\bib{rie:pspm82}{incollection}{
      author={Rieffel, Marc~A.},
       title={Applications of strong {M}orita equivalence to transformation
  group {$C^*$}-algebras},
        date={1982},
   booktitle={Operator algebras and applications, {P}art {I} ({K}ingston,
  {O}nt., 1980)},
      series={Proc. Sympos. Pure Math.},
      volume={38},
   publisher={Amer. Math. Soc.},
     address={Providence, R.I.},
       pages={299\ndash 310},
      review={\MR{84k:46046}},
}

\bib{simwil:jot11}{article}{
      author={Sims, Aidan},
      author={Williams, Dana~P.},
       title={Renault's equivalence theorem for reduced groupoid
  {$C^\ast$}-algebras},
        date={2012},
        ISSN={0379-4024},
     journal={J. Operator Theory},
      volume={68},
      number={1},
       pages={223\ndash 239},
      review={\MR{2966043}},
}

\bib{tom:interplay92}{book}{
      author={Tomiyama, Jun},
       title={The interplay between topological dynamics and theory of {$C\sp
  *$}-algebras},
      series={Lecture Notes Series},
   publisher={Seoul National University Research Institute of Mathematics
  Global Analysis Research Center},
     address={Seoul},
        date={1992},
      volume={2},
      review={\MR{MR1160781 (93h:46097)}},
}

\bib{wyk:jot18}{article}{
      author={van Wyk, Daniel~W.},
       title={The orbit spaces of groupoids whose {$C^*$}-algebras are {GCR}},
        date={2018},
        ISSN={0379-4024},
     journal={J. Operator Theory},
      volume={80},
      number={1},
       pages={167\ndash 185},
      review={\MR{3835454}},
}

\bib{wykwil:iumj22}{article}{
      author={van Wyk, Daniel~W.},
      author={Williams, Dana~P.},
       title={The primitive ideal space of groupoid {\cs}-algebras for
  groupoids with abelian isotropy},
        date={2022},
     journal={Indiana U. Math. J.},
      volume={71},
      number={1},
       pages={359\ndash 390},
        note={(arXiv:2108.02277)},
}

\bib{wil:crossed}{book}{
      author={Williams, Dana~P.},
       title={Crossed products of {$C{\sp \ast}$}-algebras},
      series={Mathematical Surveys and Monographs},
   publisher={American Mathematical Society},
     address={Providence, RI},
        date={2007},
      volume={134},
        ISBN={978-0-8218-4242-3; 0-8218-4242-0},
      review={\MR{MR2288954 (2007m:46003)}},
}

\bib{wil:pams16}{article}{
      author={Williams, Dana~P.},
       title={Haar systems on equivalent groupoids},
        date={2016},
        ISSN={2330-1511},
     journal={Proc. Amer. Math. Soc. Ser. B},
      volume={3},
       pages={1\ndash 8},
         url={http://dx.doi.org/10.1090/bproc/22},
      review={\MR{3478528}},
}

\bib{wil:toolkit}{book}{
      author={Williams, Dana~P.},
       title={A tool kit for groupoid {$C^*$}-algebras},
      series={Mathematical Surveys and Monographs},
   publisher={American Mathematical Society, Providence, RI},
        date={2019},
      volume={241},
        ISBN={978-1-4704-5133-2},
      review={\MR{3969970}},
}

\bib{zet:am82}{article}{
      author={Zettl, Heinrich~H.},
       title={Ideals in {H}ilbert modules and invariants under strong {M}orita
  equivalence of {$C\sp{\ast} $}-algebras},
        date={1982},
        ISSN={0003-889X},
     journal={Arch. Math. (Basel)},
      volume={39},
      number={1},
       pages={69\ndash 77},
      review={\MR{MR674535 (84i:46060)}},
}

\end{biblist}
\end{bibdiv}

\end{document}